
\documentclass[onethmnum,onetabnum,oneeqnum,onefignum]{siamart220329}

    \usepackage[T1]{fontenc}
    \usepackage[utf8]{inputenc}

    \usepackage{amsfonts,amssymb}

    \usepackage[pdftex]{pict2e} 

    \usepackage{enumitem}  

    \newsiamremark{remark}{Remark}
    \newsiamremark{example}{Example}

    \crefname{subsection}{Section}{Sections}
    \Crefname{subsection}{Section}{Sections}
    \crefname{section}{Section}{Sections}

\usepackage[textsize=footnotesize]{todonotes}
  \makeatletter
  \@mparswitchfalse%
  \makeatother
  \normalmarginpar

\newcommand\RR{\mathbb{R}}
\newcommand\ZZ{\mathbb{Z}}
\newcommand\NN{\mathbb{N}}
\newcommand\Sun{\mathbb{S}^1}


\newcommand{\vecfield}{X}
\newcommand{\manifM}{M}
\newcommand{\bigmanif}{\mathcal{M}}
\newcommand{\ProjLin}{P}
\newcommand{\demi}{{\textstyle\frac12}}

\newcommand{\deltax}{\delta\hspace{-.25ex}x}
\newcommand{\deltau}{\delta\hspace{-.19ex}u}
\newcommand{\deltaI}{\delta\hspace{-.21ex}I}
\newcommand{\Acc}{\mathcal{A}}
\newcommand{\AccFam}{\mathcal{A}}
\newcommand{\AccLin}{\mathsf{A}}
\newcommand{\tf}{t_{\!f}}

\newcommand{\LL}{L}

\newcommand{\BAcc}{\mathcal{B}}

\DeclareMathOperator{\co}{conv}
\DeclareMathOperator{\cone}{cone}

\DeclareMathOperator{\vect}{Span}

\title{On the controllability of \\nonlinear systems with a periodic drift}
\author{J.-B. Caillau\thanks{Université Côte d'Azur, CNRS,
    LJAD, Inria, France
    (\email{jean-baptiste.caillau@univ-cotedazur.fr}, \email{alesia.herasimenka@univ-cotedazur.fr}).}
\and L. Dell'Elce\thanks{Université Côte d'Azur, Inria, CNRS, LJAD, France
    (\email{lamberto.dell-elce@inria.fr},
    \email{jean-baptiste.pomet@inria.fr}).}
\and A. Herasimenka\footnotemark[1]
\and J.-B. Pomet\footnotemark[2]
}
\date{October 28, 2024}

\usepackage[normalem]{ulem}

\begin{document}
\maketitle 

\begin{abstract}
Sufficient and necessary conditions are established for controllability of affine control systems
where the control is constrained to a set whose convex hull contains the origin but is not necessarily,
in contrast with previously known results,
  a neighborhood of the origin.
  Part of the results, in particular these on global controllability,
  are specific to systems where all solutions with zero control (drift)
are periodic.
These conditions are expressed by means of pushforwards along the flow
of the drift, rather than in terms of Lie brackets; it turns out that they amount to  
local controllability of a time-varying linear approximation with constrained controls.
Global and local results are given, as well as a few illustrative examples.
\end{abstract}

\begin{keywords}
Nonlinear controllability, control constraints, solar sails
\end{keywords}

\begin{MSCcodes}
93B05, 93B03, 95C15
\end{MSCcodes}

\section*{Introduction}
\noindent This paper is devoted to controllability properties of affine
control systems, defined by a drift vector field and $m\geq1$ control vector fields.
We consider either local controllability around a solution with zero control
(solution of the drift vector field) or global controllability in the case where all such
solutions are periodic; in both cases, the control is constrained to a subset $U$ of
$\RR^m$ that contains zero, but whose convex hull, unlike in most known controllability
results, is not necessarily a neighborhood of zero.
Our original motivation for this study was the control of spacecrafts propelled by solar sails, see references
\cite{ecc-2022,JGCD}\footnote{A preliminary version of some results
  stated in \cref{sec-jurdj-results} can be found in \cite[Section II]{ecc-2022}.
by the authors.}
Governed by solar radiation pressure, solar sails are only
capable of generating forces
contained in a subset $U$ of a convex cone of revolution around the direction
of the incoming light. Zero is the vertex of this cone and is
  contained in $U$, but is obviously not in the interior of the convex
  hull of $U$.
If we consider an unperturbed controlled Kepler problem in the region of negative total energy, all solutions of the drift are periodic.

Devising conditions on the vector fields (and the set $U$) for controllability is quite an old
problem.
Most known sufficient conditions for controllability assume that the
Lie brackets of the vector fields defining the system span the tangent space 
    at all points (bracket generating property),
and prove controllability under additional conditions concerning in general the drift vector
field,
as recalled in \cref{section:state-of-the-art}, based on
  \cite{Lobr70,Bonn81cras,Suss-Jur72,Kren74,jurdjevic_geometric_1996,agrachev_control_2004}. Nevertheless, these results always require ---at least--- 
that zero belongs to the interior of the convex hull of $U$.
This assumption is crucial
  in the proofs, and allows for a decoupling between this condition on
  $U$ and conditions on the vector fields.
In the present paper, where zero is allowed to belong to the boundary of $\co U$, novel
  results are based on \emph{ad hoc} constructions involving the transportation along the flow of the drift vector field of some
  subsets of the tangent space defined from the control vector fields \emph{and} the set $U$.

In \cref{sec-statement}, we cast precisely the controllability problem we want to
look at, and recall the main definitions and results on the subject.
All results and proofs are stated in \cref{sec-jurdj}, completed by a
short appendix on time-varying linear control systems.
\Cref{sec-jurdj-results} is devoted to
sufficient conditions for global controllability in unprescribed time
under the periodicity assumption on the drift. The given proofs
generalize the well-known ones in \cite{Bonn81cras} (for instance), relying
on augmentation of families of vector fields (exposed into details, \textit{e.g.}, in
the textbook \cite{jurdjevic_geometric_1996}).
In \cref{sec-xbar}, we give sufficient conditions for local
controllability in prescribed time along trajectories of the drift,
periodic or not, via a different method of proof, based on linear
approximation as a control-constrained time-varying linear system.
\Cref{sec-conseq} uses the latter results in the periodic case,
yielding a more precise \emph{local} picture (local controllability in one revolution)
  than \cref{sec-jurdj-results}.
\Cref{sec-obstruc} gives necessary conditions for controllability:
some possible ``shapes''
of $U$ forbid controllability.
\Cref{sec-comments+examples} is devoted to comments on the results,
more general than the ones given along in \cref{sec-jurdj}, and to two
academic examples, one illustrating our results, and the other one
exploring a case where controllability holds while none of our
sufficient conditions is met. 
    
\section{Statement of the problem} \label{sec-statement}

\subsection{Systems}
\label{sec-sys}
We consider an affine control system
\begin{equation}
  \label{sys}
    \dot x=
    \vecfield^0(x)+
             \sum_{k=1}^mu_k\,\vecfield^k(x)\,,\ \
    u=(u_1,\ldots,u_m)\in U\subset\RR^m,
\end{equation}
where the set $U$ defines the constraints on the
control and $\vecfield^0,...,\vecfield^m$ are $m+1$ smooth ($C^\infty$) vector fields on a smooth
connected (Hausdorff, second countable)
manifold $\bigmanif$ of dimension $d$.
We do not require $m\leq d$ although it usually is the case.
We assume that the vector field $\vecfield^0$ is complete.

\paragraph{Assumptions on the control set $U$}
All over the paper, we assume that
\begin{equation}
  \label{eq:assU}
  0\in U \quad\text{ and }\quad \vect U=\RR^m\,.
\end{equation}
The first point is natural: our results partially rely on assumptions on the behavior of the
differential equation $\dot x=\vecfield^0(x)$ obtained for the zero control, that should
hence be permitted.
The second point $\left(\vect U=\RR^m\right)$ is not restrictive: if $\vect U$ was a subspace of $\RR^m$ of dimension
$m'<m$, one could pick $m'$ new control vector fields, constant linear
combinations of $\vecfield^1,\ldots,\vecfield^m$ and use them as
control vector fields in a system with $m'$ controls satisfying the
assumption.

\paragraph{Periodic orbits of the drift, angular variable}
Let us denote by $\exp(t \vecfield)$ the flow at time $t$ of a smooth vector field $X$.
The following property of the drift is assumed in some results on global controllability,
and in the rest of this subsection.
\begin{equation}
  \label{per}
  \begin{array}{l}
    \vecfield^0 \text{ has no equilibrium point, and, for any } x \text{ in } \bigmanif,
    \\
    t\mapsto\exp(t \vecfield^0)(x)\ \text{is periodic with minimal period }T(x).
  \end{array}
\end{equation}
%
Let $\manifM$ be the set of periodic orbits and
$\pi$: $\bigmanif\to\manifM$ the 
projection that maps any $x$ in $\bigmanif$ to its orbit, so that
$\pi^{-1}(\pi(x))=\{\exp(t \vecfield^0)(x),\;t\in\RR\}$.

\begin{proposition}
  \label{prop-pi}
  Assuming \eqref{per}, the set $\manifM$ of periodic orbits of $\vecfield^0$ is a smooth
  manifold of dimension $d-1$ and the projection
  $\pi\!:\bigmanif\to\manifM$
  is a (locally trivial) fibration with
  fiber $\Sun=\RR/2\pi\ZZ$, \textit{i.e.},  any $\bar I\in\manifM$ has a
  neighborhood $O$ in $\manifM$ such that $\pi^{-1}(O)$ is diffeomorphic to
  $O\times\Sun$.
\end{proposition}

\begin{proof} 
    The manifold structure of $\manifM$ is obtained as follows\footnote{
      Alternatively, $M$ is a manifold as the quotient of $\bigmanif$ by the smooth free action
      $(\varphi,x)\mapsto\exp\bigl((T(x)/2\pi)\varphi\vecfield^0\bigr)(x)$
      of the compact Lie group $\Sun\!=\!\RR/2\pi\ZZ$ on $\bigmanif$;
      see \emph{e.g.}, \cite[Theorem 1.95]{Gall-Hul-Laf04}.
    }:
  any orbit $\bar I$ has (many) smooth transverse sections $\Sigma$ (smooth submanifold of
  $\bigmanif$ of dimension $d-1$), small enough to
  intersect any orbit at most once; let $O$ be the set of orbits that intersect $\Sigma$,
  and $\rho:O\to\Sigma$ the one-to-one map that sends such an orbit 
  to the point where it intersects $\Sigma$; $O$ provides a
  neighborhood of $\bar I$ in $\manifM$, that inherits the manifold structure of $\Sigma$.
  Concerning the fiber bundle structure, for an arbitrary $\bar I$, take $O$ as above and define $\Phi\!:O\times\Sun\to\bigmanif$ by
  $\Phi(I,\varphi)=\exp\left(\frac{T(\rho(I))}{2\pi}\varphi\,\vecfield^0\right)\left(\rho(I)\right)$, where we
  note that \eqref{per} defines $T(\cdot)$ as smooth as $\vecfield^0$.
\end{proof}

On such an open set $O\times\Sun$, using a diffeomorphism $\Phi:O\times\Sun\to\pi^{-1}(O)$,
and decomposing on $T_{(I,\varphi)}(O\times\Sun)=T_{I}O\oplus
T_\varphi\Sun=T_{I}O\oplus\RR$, one has
\begin{equation}
  \label{Xi-Fi}
  \vecfield^0=\Phi_*\left(\omega\frac\partial{\partial\varphi}\right)
    \ \ \text{and}\ \ 
  \vecfield^i=\Phi_*\left(f^i\frac\partial{\partial\varphi}+F^i\right)=F^i,\ \;i=1,\ldots,m
\end{equation}
where $\omega\!:\manifM\times\Sun\to\RR$ and each $f^k\!:\manifM\times\Sun\to\RR$ are smooth
functions, 
and each $F^k$ is a vector field on $O$ smoothly depending on
$\varphi\in\Sun$ (or a vector field on $O\times\Sun$ with zero component on $T\Sun$),
and the pushforward notation $\Phi_*$ is, as usual, defined by
\begin{equation}
  \label{pushforward}
\Phi_*(X)(y) := \Phi'(\varphi^{-1}(y)) \cdot
X(\Phi^{-1}(y)),\quad y \in \bigmanif.
\end{equation}
for any diffeomorphism $\Phi$ and any vector field $X$ on a manifold $\bigmanif$.
Then System \eqref{sys} can be written as follows to take advantage of the product structure
\begin{equation}
  \label{sys-Iphi}
  \begin{array}{l}
    \displaystyle
    \dot I = 
    \sum_{k=1}^mu_k\,F^k(I,\varphi)\,,
    \\[-1ex]
    \displaystyle
    \!\dot\varphi=\omega(I) 
     +
    \sum_{k=1}^mu_k\,f^k(I,\varphi)\,,
  \end{array}
   \qquad u=(u_1,\ldots,u_m)\in U\,.
 \end{equation}
Note that $\omega(I)$ naturally comes out as $\omega(I,\varphi)$;
composing $\Phi$ with a well chosen diffeomorphism of $O\times\Sun$ of the form
$(I,\varphi)\!\mapsto\!(I,a(I,\varphi))$  removes the dependence  on $\varphi$, so
that $\omega$ can be taken as a function from $\manifM$ to $\RR$.
This product form allows for a simpler and more concrete statement
of results or proofs when System~\eqref{sys} satisfies \eqref{per},
see for instance \eqref{eq:1}--\eqref{eq:26a}.

In the applications to control of solar sails mentioned in the introduction, and
at least without perturbations to a Newtonian potential, $\bigmanif$ (of dimension 6) being
the phase space restricted to negative total energy, Condition \eqref{per} is met. It is customary to use a product form like \eqref{sys-Iphi} where $I$ contains five independent first integrals of the Kepler problem, and $\varphi\in\RR/2\pi\ZZ$ is an angle indicating position on an ellipsis.
It is well known in celestial mechanics that, as we saw under the mere Condition \eqref{per}, the fibration $\bigmanif\to\manifM$ is defined globally, but $\bigmanif$ is not
diffeomorphic to $\manifM\times\Sun$; this is why we do not assume $O=\manifM$ in
\eqref{Xi-Fi}--\eqref{sys-Iphi}, in order to keep an equivalence with the general form
\eqref{sys} under Assumption \eqref{per}.
Our illustrative examples in \cref{sec-comments+examples} and \cref{rmk:cond-suppl-neg}
are however globally in the product form \eqref{sys-Iphi} for simplicity.

\subsection{Accessible sets and controllability, definitions}
\label{sec-def-access}
A solution of \eqref{sys} is a map $t\mapsto(x(t) ,u(t))$,
defined on some interval, valued in $\bigmanif \times U$,
where $u(.)$ is locally essentially bounded and $x(.)$ is absolutely continuous, 
that satisfies \eqref{sys} for almost all time $t$.
For such a solution on an interval $[0,\tf]$, $\tf\geq0$, one says that
the control $u(.)$ steers $x(0)$ to $x(\tf)$ in time $\tf$; indeed
the solution is unique given $u(.)$ and $x(0)$.
One may also say that the same control steers $x(\tf)$ to $x(0)$ in time $-\tf$ because
uniqueness holds also backwards.

\begin{remark}
  \label{rem-ext}
  In the situation where \eqref{per} is satisfied,
  the following fact is true, with $\pi$ as in \cref{prop-pi}:
  if there is a control steering an initial point $x$
  to a final point $y$ in some time $\tau$, then there is also a control
  steering any point $x'$ in the same orbit as $x$ ($\pi(x')=\pi(x)$) to
  any point $y'$ in the same orbit as $y$ ($\pi(y')=\pi(y)$), in time less
  than $\tau+T(x)+T(y)$. Indeed, the control consists, for some $t_1$ in $[0,T(x))$
  and $t_2$ in $[0,T(y))$, in applying a zero control on
  $[0,t_1]$ and $[t_1+\tau, t_1+\tau+t_2]$ 
  and the
  original control on $[t_1, t_1+\tau]$.
\end{remark}


The reachable set from $x_0$ in time $\tf\in\RR$ is the set
of points in $\bigmanif$ that can be reached in forward or backward time $\tf$ from $x_0$ for
some control:
\begin{equation}
  \label{eq:accT}
  \Acc^{U}_{\tf}(x_0)=\{x(\tf),\;
  \text{with }t\mapsto(x(t),u(t))
  \text{ solution of \eqref{sys} on $[0,\tf]$},\;
  x(0)=x_0\}\,.
\end{equation}
We keep the set $U$ as a superscript to stress the constraint on the control.
Obviously, for two subsets $V_1$ and $V_2$ of $U$,
$\Acc^{V_1}_{\tf}(x)\subset\Acc^{V_2}_{\tf}(x)$ if $V_1\subset V_2$.
It is of course assumed that the interval of definition of the solution contains $[0,\tf]$.
If $\tf<0$, $[0,\tf]$ should be understood as $[\tf,0]$, and
it is clear that 
$y$ belongs to $\Acc^{U}_{t}(x)$ if and only if $x$ belongs to $\Acc^{U}_{-t}(y)$. 
From accessible sets in prescribed time, we define
\addtocounter{equation}{1}
\begin{equation}
  \label{eq:acc}
  \Acc^{U}(x)
  =\bigcup_{0\leq t}\Acc^{U}_t(x)
  \quad\text{and}\quad
  \Acc^{U}_{\mathbf{-}}(x)
  =\bigcup_{t\leq 0}\Acc^{U}_t(x)\,,
\end{equation}
the accessible sets in unprescribed forward and backward time
($\Acc^{U}(x)$ is sometimes called simply ``the accessible set from $x$'').


\begin{definition}[Global controllability]\label{def:controllabilty-glob} 
\hspace{.3em}\textup{\textup{I.}}\hspace{.1em}
 System \eqref{sys} is \textbf{globally controllable} if and only if
    $\Acc^{U}(x)=\bigmanif$ for all $x$ in $\bigmanif$.
\hspace{.4em}
\textup{\textup{II.}}\hspace{.1em}
Let $\pi : \bigmanif \!\to\! \manifM$ be some fibration\footnote{
  This $\pi$ need not be related to the construction from \cref{sec-sys}
  ($\vecfield^0$ is even not assumed to satisfy \eqref{per});
  being a fibration could be replaced by the weaker requirement of being a submersion.
}, 
with $\manifM$ a manifold of arbitrary dimension $n<d$.
System \eqref{sys} is \textbf{\boldmath globally controllable with respect to $\pi$} if and only if
$\pi\!\left(\Acc^{U}(x)\right)=\manifM$ for all $x$ in $\bigmanif$.
\end{definition}
\begin{remark}
  \label{rem-2}
  Global controllability (\textup{\textup{I}}) obviously implies global
  controllability with respect to $\pi$ (\textup{\textup{II}}). The converse does not hold: take any non
  controllable system and $\manifM\!=\!\{0\}$, $n=0$, $\pi$ constant.
  However, under the periodicity Assumption \eqref{per} and if $\pi$ is the fibration 
  induced by the periodic orbits (\textit{cf.} \cref{prop-pi}), global
  controllability with respect to $\pi$ does imply global
  controllability according to \cref{rem-ext}. 
\end{remark}

Controllability \emph{with respect to $\pi$} could also be called \emph{partial controllability},
a familiar notion where only the partial information $\pi(x)$ on the
final point (at final time) is
specified rather than $x$ itself.
In \cite{Danh-Loh-Jun21,Krei-Sar64,Sara-Kra63}
it is called \emph{output controllability}, as
$\pi$ can be viewed as an $n$-dimensional output of the system, specified at final time.
  In \cite{Lewi-Mur99} or \cite[Section 7.2]{Bullo-Lewis}, the notion of
\emph{configuration controllability} for mechanical systems is similar:
roughly speaking, $\pi\!:TQ\to Q$ is defined by
$\pi(q,\dot q)=q$, where $q$ (position) is the current point in the configuration
manifold $\manifM=Q$ and $(q,\dot q)$ (position-velocity) the one in the
state manifold $\bigmanif=TQ$, tangent bundle of $Q$;
there is however a slight difference in the choice of initial conditions
(zero velocity initial conditions are privileged
in~\cite{Bullo-Lewis,Lewi-Mur99}).

Let us now pursue with \emph{local} controllability,
either around a particular solution, with a specified final time, 
or around a particular \emph{periodic} solution, with unprescribed final time.
In both cases, the particular solution is one of $\dot x=\vecfield^0(x)$.
The given definitions would be similar for a solution $t\mapsto(\bar x(t),\bar u(t))$ of \eqref{sys} with a nonzero control, and the results of the paper could be extended, but it does not meet the original motivations of the paper.
%
  The following notation is introduced to make \cref{def:controllabilty-loc} concise. Let
\begin{equation}
  \label{eq:9}
  \begin{split}
    \BAcc_{\tf}^{U,\Omega}(x_0)=\{ x(\tf),\;
  &\text{with }t\mapsto(x(t),u(t))
  \text{ solution of \eqref{sys} on }[0,\tf]\\[-1ex]
  &\quad\text{ such that }
  x(0)=x_0 \text{ and }x(t)\in\Omega
  \text{ for all $t$ in }[0,\tf]\}\,,\hspace{-.5em}
  \end{split}
\end{equation}
with $U,x_0,\tf$ as above and $\Omega$ some open subset of $\bigmanif$.
Compared to the accessible sets $\!\Acc$ defined above, we added
the requirement that the trajectories $x(.)$ have to stay in $\Omega$. Thus,
$\Acc_{\tf}^{U}(x_0)=\BAcc_{\tf}^{U,\bigmanif}(x_0)$.

\begin{definition}[Local controllability]
  \label{def:controllabilty-loc}
  Let $t\mapsto\bar x(t)$  be a particular solution of $\dot x=\vecfield^0(x)$,
  defined on $[0,\tf]$ for some $\tf>0$, and let $\Gamma=\bar x([0,\tf])$.
\\[.6ex]
\textup{I.}\hspace{.6em}
System~\eqref{sys} is \textbf{\boldmath locally controllable around the solution $\bar x(.)$}
   if and only if \\the set
$\BAcc_{\tf}^{U,\Omega}(\bar x(0))$
is a neighborhood of $\bar x(\tf)$ for any neighborhood $\Omega$ of $\Gamma$ 
in $\bigmanif$.
\\[.6ex]
\textup{II.}\hspace{.6em}
Let $\pi\!: \bigmanif \to \manifM$ be as in \textup{\cref{def:controllabilty-glob}-II}.
  System~\eqref{sys} is \textbf{\boldmath locally controllable with respect to $\pi$ around the solution}
  $\bar x(.)$ if and only if the set
$$\pi\left(\BAcc_{\tf}^{U,\pi^{-1}(\Omega')}(\bar x(0))\right)$$
is a neighborhood of
$\pi\left(\bar x(\tf)\right)$ in $\manifM$ for any neighborhood
$\Omega'$ of $\pi(\Gamma)$ in $\manifM$.
\\[.6ex]
\textup{III.}\hspace{.6em}
Assume that the drift vector field $\vecfield^0(x)$ satisfies \eqref{per}, let $\pi\!: \bigmanif \to \manifM$
be the fibration with base $M$ equal to the respective orbit space, and let
$\bar x(.)$ be one of these periodic orbits, 
$\tf>T(\bar x(0))$.
\\
System~\eqref{sys-Iphi} is
  \textbf{locally orbitally controllable around the periodic solution $\bar x(.)$}\\
  if and only if the set
  $$\bigcup_{t\in[0,+\infty)}\pi\Bigl(\BAcc_{t}^{U,\pi^{-1}(\Omega')}(\bar x(0))\Bigr)$$
is a neighborhood of $\pi(\bar x(0))$ in $\manifM$ for any neighborhood
$\Omega'$ of $\pi(\bar x(0))$ in $\manifM$.
\end{definition}
In part III, since \eqref{per} holds, the definition may be
re-formulated using the product form
\eqref{sys-Iphi} locally on some open $O\subset \manifM$ containing
$\bar I=\pi(\bar x(0))$ (in which the solution $t\mapsto\bar x(t)$
reads $t\mapsto(\bar I,\bar\varphi+\omega(\bar I\,)\,t)$):
System~\eqref{sys-Iphi}, or \eqref{sys}, is
\emph{locally controllable around the periodic solution $\bar I$}
(it does not depend on $\bar\varphi$)
  if and only if the set
\begin{equation}
  \label{eq:401}
  \begin{split}
    \hspace{-.2em}
    \{ I(\tf),\ \tf\!\in\![0,+\infty)
    \text{ and }
    t\mapsto(I(t),\varphi(t),u(t))
  \text{ solution of \eqref{sys-Iphi} on }[0,\tf]\text{ such }\hspace{.8em}\\
  \text{that }
  I(0)=\bar I \text{ and }
  I(t)\in\Omega' \text{ for all $t$ in }[0,\tf]\}
  \end{split}
\end{equation}
is a neighborhood of $\bar I$ in $\manifM$ for any neighborhood
$\Omega'$ of $\bar I$ in $\manifM$.
\begin{remark}
  \label{rem-condloc}
  We give necessary conditions for these properties in
  \cref{sec-obstruc}; while
  \cref{thm:xbar,thm:1tour} in \cref{sec-xbar,sec-conseq} give sufficient conditions,
  when $U$ is convex and compact, for stronger
  local controllability properties. These properties require, for instance, instead
  of  local controllability around $\bar x(.)$ as in Definition~\ref{def:controllabilty-loc}-I above, that
``for any $\varepsilon>0$,  $\Acc^{\varepsilon U}_{\tf}\!(\bar x(0))$ is a neighborhood of $\bar x(\tf)$''.
  This implies the above ``local controllability around $\bar x(.)$'' because,
for any neighborhood $\Omega$ of $\Gamma$, one has
$\Acc_{\tf}^{\varepsilon U}(x_0)\subset\BAcc_{\tf}^{U,\Omega}(x_0)$
for $\varepsilon>0$ small enough, from continuous
dependance of the solutions on the control, and the fact that $\varepsilon U\subset U$ if
$\varepsilon\in[0,1]$.
\end{remark}

\begin{remark}
  \label{rem-loc}
  While \cref{def:controllabilty-glob} of global controllability is classical,
  local controllability is a moving notion in the literature, so that
\cref{def:controllabilty-loc}  
  agrees with only part of the literature.
  Two remarks are in order.
\\-
  In our setting, local controllability  is at the same time weaker
  (it only deals with reaching some neighborhood of the end point of a trajectory)
  and stronger (trajectories are required to remain close to the reference one)
  than global controllability.
  This is illustrated at the end of \cref{exemple-illustration}.
\\-
  We believe that a ``local'' property of a control system at some locus should only depend
  on the description of this system on arbitrary small neighborhoods of that locus.  In this
  sense, \cref{def:controllabilty-loc} is
  local with respect to the state only:
  System~\eqref{sys} is locally controllable around the solution $\bar x(.)$
  if and only if any system $\dot x=f(x,u)$ such that
  $f(x,u) = X^0(x)\!+\!\sum u_k\vecfield^k(x)$ for $(x,u)$ in $\mathcal{O}\times U$,
  with $\mathcal{O}$ an open neighborhood of $\bar x([0,\tf])$,
  is locally controllable around the solution $\bar x(.)$.
\end{remark}



\subsection{Some families of vector fields}
\label{sec:LieBra}
Let $V(\bigmanif)$ be the set of smooth (\emph{i.e.} either $C^\infty$ or $C^\omega$) vector fields on $\bigmanif$;
the Lie bracket of two elements of $V(\bigmanif)$ (defined in any advanced calculus 
textbook) is also in
$V(\bigmanif)$, making $V(\bigmanif)$ a Lie algebra over the field $\RR$.
A family of vector fields is a subset $\mathcal{F}\subset V(\bigmanif)$;
we denote by $\mathcal{F}(x)$ the subset of $T_x\bigmanif$ made
of the values at $x$ of vector fields in $\mathcal{F}$:
    \begin{equation}
      \label{eq:8}
      \mathcal{F}(x)=\{X(x),\; X\in\mathcal{F}\}\subset T_x\bigmanif,
    \end{equation}
and by $\vect\mathcal{F}(x)$ the vector subspace of $T_x\bigmanif$ spanned by
$\mathcal{F}(x)$.
Finally, we denote  by $\AccFam_{\mathcal{F}}(x)$ the \emph{accessible set from $x$ of this
  family, i.e.}\ the
set of points that can be
reached from $x$ by following successively the flow of a finite number of
vector fields in $\mathcal{F}$, each for a certain positive time; this notion differs from $\Acc^{U}\!(x)$ in \eqref{eq:acc},
 attached to the control system \eqref{sys}, that allows
  arbitrary $\mathrm{L}^\infty_{\mathrm{loc}}$ controls.

Let us define two families of vector fields from $\vecfield^0, \ldots,\vecfield^m$ defining System~\eqref{sys}:
\begin{align}
  \label{eq:45}
    \mathcal{L}=&\;\Bigl\{
                  [\vecfield^{i_{k}},[\vecfield^{i_{k-1}},[\cdots\cdots[\vecfield^{i_{2}},\vecfield^{i_{1}}]\cdots]]]
                  ,
                  \\[-1.8ex]\nonumber&\hspace{9em}
                  (i_{1},\ldots,i_{k})\in\{0,\ldots,m\}^k
                  ,\,k\in\NN\setminus\{0\}
             \Bigr\}
  \,,\hspace{-2em}
  \\[-1ex]
      \label{eq:3}
  \mathcal{F}_0=&\;\bigl\{\mathrm{ad}_{\vecfield^0}^j \vecfield^k,\
                 k\in\{1,\ldots,m\},\ j\in\NN\bigr\} \,.
\end{align}
Recall that $\mathrm{ad}_X^j Y$ is defined by $\mathrm{ad}_X^0 Y = Y,$ and $\mathrm{ad}_X^j Y = [X, \mathrm{ad}_X^{j-1} Y]$. $\mathcal{L}$ is used in \cref{section:state-of-the-art} to
	discuss literature, and
$\mathcal{F}_0$ in \cref{sec-comments+examples} to discuss our results.
\begin{definition}[Bracket generating property]\label{def:larc}
  The control system \eqref{sys}, or
  the family of vector fields $\{\vecfield^0,\ldots,\vecfield^m\}$, is
  called \textit{bracket generating at point} $x$ 
  if and only if $\, \vect\mathcal{L}(x)=T_x\bigmanif$, and \emph{bracket generating} if
  this is true for all $x$ in $\bigmanif$.
\end{definition}

\noindent
Being bracket generating is also called the Lie Algebra
Rank Condition (LARC), see \cite{Suss73}.
  In most of the literature
  (\emph{e.g.}\ \cite[chapter 8]{agrachev_control_2004} or 
  \cite[chapter 3]{jurdjevic_geometric_1996}),
  one proves controllability properties by piecewise constant controls, \emph{i.e.}  analyzes $\AccFam_{\mathcal{G}}(x)$ for the family of vector fields
\begin{equation}
  \label{eq:cdc1}
  \mathcal{G}=\{\,\vecfield^0+u_1\vecfield^1+\cdots+u_m\vecfield^m\,,\;(u_1,\ldots,u_m)\in U\}.
\end{equation}
See the lines after \eqref{eq:8} for a definition of
$\AccFam_{\mathcal{G}}(x)$, which is also the set of points that can be
reached, for the Control system \eqref{sys}, with piecewise constant $U$-valued
controls, to the effect that $\AccFam_{\mathcal{G}}(x)\subset\Acc^U(x)$.
If $U$ satisfies \eqref{eq:assU},
then $\vect\{\vecfield^0,\ldots,\vecfield^m\}(x)=\vect\mathcal{G}(x)$ for all $x$, so that
$\mathcal{G}$ is bracket generating if and only
if $\{\vecfield^0,\ldots,\vecfield^m\}$ is.

\subsection{\boldmath Controllability when $\co U$ is a neighborhood of the origin, state of the art}
\label{section:state-of-the-art}
  It is natural to assume that
  the family $\{\vecfield^0,\ldots,\vecfield^m\}$ is bracket generating because this
  property is
(for example according to the Sussman's Orbit Theorem~\cite{Suss73})
  necessary for controllability if the vector fields are real analytic,
  which is  a reasonable framework although the results of the present paper do not require it.
Bracket generating is not however sufficient,
  it rather implies a weaker property, where the accessible set from a point has a
  nonempty topological interior,
  see the work of Sussmann and Jurdjevic \cite{Suss-Jur72} or Krener \cite{Kren74}.

One case where the bracket generating property, assuming that $U$ is a neighborhood of the origin, 
implies controllability is the one of driftless systems, namely Systems
\eqref{sys} with $\vecfield^0=0$, see, \textit{e.g.}, the work of Lobry~\cite{Lobr70}.
This is however far from Assumption~\eqref{per}, where the drift is assumed to be non-vanishing.
It turns out that controllability is also obtained under this periodicity assumption, or the more general property of
Poisson stability.
Recall that, for a complete vector field $X^0$ on $\bigmanif$,
a point $x\in\bigmanif$ is said to be
\emph{Poisson stable} for $X^0$ if there exists a sequence of positive times
  $t_n \to \infty$ such that $\exp(t_n X^0)(x) \to x$ when $n \to \infty$;
  $X^0$ is said to be Poisson stable if there is a dense subset of such
  points.
It turns out that many physical dynamical systems have this property; this makes the following result quite
useful.
\begin{theorem}[Bonnard, 1981, \cite{Bonn81cras}]
  \label{th:BB}
   System \eqref{sys} is globally controllable if
  \\ \hspace*{2em}\emph{(i)} the vector field $X^0$ is Poisson stable, 
  \\ \hspace*{2em}\emph{(ii)} the family $\{X^0,X^1,\ldots,X^m\}$ is bracket generating, and 
  \\ \hspace*{2em}\emph{(iii)} $\co U$
                       is a neighborhood of $0$ in $\RR^m$.
\end{theorem}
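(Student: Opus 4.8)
The plan is to prove global controllability through three steps carried out in order: forward accessible sets have nonempty interior at every point, the forward accessible set $\Acc^{U}(x)$ is \emph{dense} in $\bigmanif$, and finally density together with nonempty interior forces $\Acc^{U}(x)=\bigmanif$. First I would record the consequences of the bracket generating hypothesis (ii) alone. By the results of Krener and of Sussmann--Jurdjevic (\cite{Kren74,Suss-Jur72}), a bracket generating system has forward accessible sets with nonempty interior, and in fact $\Acc^{U}(x)\subset\overline{\Int\Acc^{U}(x)}$ for every $x$. Applying the same statement to the time-reversed system, whose defining family $\{-\vecfield^0,-\vecfield^1,\ldots,-\vecfield^m\}$ is again bracket generating (the Lie brackets are unchanged up to sign under $\vecfield^i\mapsto-\vecfield^i$), the backward accessible sets $\Acc^{U}_{\mathbf{-}}(x)$ also have nonempty interior. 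This step uses neither (i) nor (iii).

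The core of the argument is density, and this is where (i) and (iii) enter, via the augmentation technique of \cite{jurdjevic_geometric_1996,Bonn81cras}. Since $0\in U$ the drift is admissible, so $\exp(t\vecfield^0)$ maps $\Acc^{U}(x)$ into itself for $t\geq0$. I claim $\overline{\Acc^{U}(x)}$ is in fact invariant under the \emph{full} one-parameter group $\{\exp(t\vecfield^0):t\in\RR\}$: if $y$ is Poisson stable there are $t_n\to+\infty$ with $\exp(t_n\vecfield^0)(y)\to y$, whence for fixed $s>0$ and $n$ large $\exp((t_n-s)\vecfield^0)(y)\to\exp(-s\vecfield^0)(y)$ with $t_n-s>0$, so $\exp(-s\vecfield^0)(y)\in\overline{\Acc^{U}(y)}$; density of the Poisson stable points and continuity propagate this to all of $\overline{\Acc^{U}(x)}$. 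Because (iii) places $0$ in $\Int\co U$, after relaxation the controls $\pm\varepsilon e_k$ are admissible, so $\overline{\Acc^{U}(x)}$ is also invariant under the forward flows of $\vecfield^0\pm\varepsilon\vecfield^k$; composing these with the now-available backward drift and passing to the Lie product limit yields two-sided flows of each $\vecfield^k$ as well. Hence $\overline{\Acc^{U}(x)}$ is invariant under the group generated by $\vecfield^0,\ldots,\vecfield^m$, which by Sussmann's Orbit Theorem \cite{Suss73} and hypothesis (ii) acts transitively on the connected manifold $\bigmanif$; therefore $\overline{\Acc^{U}(x)}=\bigmanif$.

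Finally I would upgrade density to equality. Fix $x$ and an arbitrary target $z$. By the first step $\Int\Acc^{U}_{\mathbf{-}}(z)\neq\emptyset$, and by the second step $\Acc^{U}(x)$ is dense, so this dense set meets the nonempty open set $\Int\Acc^{U}_{\mathbf{-}}(z)$ at some point $w$. Then $w\in\Acc^{U}(x)$ and $z\in\Acc^{U}(w)$, and concatenating the two controls steers $x$ to $z$ in finite time, so $z\in\Acc^{U}(x)$. As $z$ and $x$ are arbitrary, $\Acc^{U}(x)=\bigmanif$ for all $x$, which is exactly global controllability.

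I expect the delicate step to be the density argument: the passage from ``forward drift flow approximates backward drift flow near a Poisson stable point'' to genuine two-sided invariance of the \emph{closed} set $\overline{\Acc^{U}(x)}$, and then the careful use of (iii), the relaxation (chattering) theorem, and the Lie product formula to convert the forward semigroup associated with $\mathcal{G}$ into a full group whose orbit, by the Orbit Theorem, is all of $\bigmanif$. The main bookkeeping obstacle is keeping track of which inclusions hold for the exact $U$-reachable set, for its closure, and for the piecewise-constant family accessible set $\AccFam_{\mathcal{G}}(x)\subset\Acc^{U}(x)$, so that the closure-level transitivity argument indeed transfers back to the genuine accessible set in the final concatenation step.
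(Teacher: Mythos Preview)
The paper does not give its own proof of this theorem; it is quoted as a known result with references to Bonnard's note \cite{Bonn81cras} and to Jurdjevic's textbook \cite[Chapter~4, Theorem~5]{jurdjevic_geometric_1996}, and the paper then moves on to generalize it in \cref{thm:CDC-gen}. Your proposal is precisely the standard argument found in those references: the three-step decomposition (nonempty interior of forward and backward accessible sets from the bracket generating property, density of $\overline{\Acc^U(x)}$ via Poisson stability of the drift combined with relaxation to $\co U$ and the augmentation/Lie product trick, and finally the upgrade from density to equality by intersecting with the interior of a backward accessible set) is exactly the classical route, and your identification of the delicate bookkeeping---which inclusions hold for $\Acc^U(x)$, for its closure, and for $\AccFam_{\mathcal G}(x)$---is apt.

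One small caution in your wording: relaxation does not literally make the controls $\pm\varepsilon e_k$ ``admissible''; what it gives is $\overline{\Acc^U(x)}=\overline{\Acc^{\co U}(x)}$, so that $\overline{\Acc^U(x)}$ is forward invariant under the flows of $\vecfield^0\pm\varepsilon\vecfield^k$. From there your Lie product limit is the right mechanism. This is a phrasing issue, not a gap, and your final paragraph shows you are aware of it.
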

\noindent It is stated in this form in the textbook
\cite{jurdjevic_geometric_1996} (Chapter 4, Theorem 5) or in the original reference, that mentions techniques due to \cite{Jurd-Kup81}.
It has been widely used,
for instance to prove controllability prior to solving an optimal control problem, like in
\cite{Cail-Noa01,lian-1994a} for the controlled Kepler problem.
Assumption (iii) is essential to \cref{th:BB},
as evidenced by a very simple academic example of the form \eqref{sys}
with $\bigmanif=\RR\times\Sun$
and one scalar control $u$ ($m=1$):
$\,\dot I=u\,,\ \dot\varphi=1$, $u\in U=[0,1]$;
conditions (i) and (ii) are satisfied, (iii) is not, and 
controllability cannot hold because $I\in\RR$ cannot decrease.
  Our \cref{thm:CDC-gen}
  can be seen as a generalization of \cref{th:BB} to the case where (iii) does not hold.
  There we keep (i) (and even the stronger Condition \eqref{per})
  but we have to replace (ii) by a stronger condition,
  involving not only the vector fields but also the set $U$.
  Next section introduces the necessary tools.

\subsection{Further constructions, not based on Lie brackets}
\label{sec-notreE}

To the vector fields $\vecfield^0,\ldots,\vecfield^m$ and the
set $U$ defining System \eqref{sys}, we associate, for any $x$ in $\bigmanif$ and any
real numbers $\tau$, or $t_1<t_2$,
the following subsets\footnote{
  These subsets are related to control variations in the proof of
  the Pontryagin Maximum Principle.  For instance,
  the condition of \cite[Lemma 12.6]{agrachev_control_2004} is in
  terms of $E^U_{[t_1,t_2]}(x)$
  (with $\tilde u\equiv0$, $\mathcal{T}=[t_1,t_2]$,
  $g_{\tau,u}(x)=\sum_{k=1}^mu_k\left(\exp(-\tau\vecfield^0)_*\vecfield^k\right)\!(x)$).
  Our \cref{thm:xbar} can be viewed as a consequence of that lemma in
  the simpler case where the reference control is smooth.
}
of $T_x\bigmanif$, where the pushforward notation ``${}_*$''
  was recalled in \eqref{pushforward}:
\begin{align}
  \label{eq:1}
  E^U_\tau\!(x)&=\left\{
             \sum_{k=1}^m u_k
             \left(\exp(-\tau\vecfield^0)_*\vecfield^k\right)\!(x),\;
             u\in U \right\}
                 \subset T_x\bigmanif\,,
\\
  \label{eq:EUt1t2}
                 E^U_{[t_1,t_2]}\!(x)
  \,
               &
                 =\!\bigcup_{\tau\in[t_1,t_2]}\!\!\!\!\!
                 E^U_\tau\!(x)\subset T_x\bigmanif\,,
\\[-1.4ex]
  \intertext{and, when the periodicity assumption \eqref{per} holds and defines $T(x)$,}
  \label{eq:2}
       E^U\!(x)&=
                 E^U_{[0,T(x)]}\!(x)\,=\!\bigcup_{\tau\in[0,T(x)]}\!\!\!\!\!
                 E^U_\tau\!(x)\subset T_x\bigmanif\,.
                 \\[-5ex]\nonumber
\end{align}
Note that $E^U(.)$ and the function $T(.)$, are invariant under the flow of $\vecfield^0$.

\begin{remark}
    In the particular case when the fibration $\pi$ is the one associated with the orbits
    of the periodic drift, using the product presentation $(I,\varphi)$ from
    Equations~\eqref{Xi-Fi}--\eqref{sys-Iphi}, with
    $\varphi\in\Sun$ and $I\in O\subset\manifM$, one may
    simplify the expressions in
\eqref{eq:1}, \eqref{eq:EUt1t2} and \eqref{eq:2} because
the construction leads to
$\vecfield^0=\omega\,\partial/\partial\varphi$, hence the explicit expression of
the flow of $\vecfield^0$:
$\exp(t\vecfield^0)(I,\varphi)=(I,\varphi+\omega(I)\,t)$, so that
$\left(\exp(-\tau\vecfield^0)_*Z\right)(I,\varphi)=Z(I,\varphi+\omega(I)\,\tau)
-\tau\,Z\omega (I,\varphi+\omega(I)\tau) \,\partial/\partial \varphi$
  for any vector field $Z$ ($Z\omega$ stands for the Lie derivative along
  $Z$ of $\omega:O\times\Sun\to\RR$).
  In particular, with $x=(I,\varphi)$, denoting by $\pi'(x)$ the derivative (tangent map) 
  of the fibration $\pi$ at point $x$ (\textit{i.e.} the linear projection on the first
  factor in $T_x\bigmanif=T_I \manifM\times\RR$),
  the projection of $E^U(x)$ has a simple expression:
\begin{equation}
  \label{eq:26a}
  \begin{split}
  \pi'(I,\varphi) \bigl(E^U\!(I,\varphi)\bigr)=\breve{E}^U(I)\subset T_I\manifM\,& \quad
  \text{with}
  \\[-.4ex]
  \breve{E}^U\!(I)
  =\left\{\sum_{k=1}^mu_k\,F^k(I,\varphi)\,,
    \right.&\left.\vphantom{\sum_{k=1}^m}
      \ \;u\in U\,,\;\varphi\in\Sun\right\} \,.
    \\[-1.6ex]
\end{split}
\end{equation}
Here,
invariance of $E^U\!(.)$ by the flow of $\vecfield^0$ implies that $\breve{E}^U$ does not
depend on $\varphi$.
\end{remark}

\section{Global and local controllability results}
\label{sec-jurdj}

All results are concerned with System~\eqref{sys}
(or with System \eqref{sys-Iphi} in product form if $\vecfield^0$ satisfies \eqref{per}).
A few remarks are in order on the assumptions in these results.

\smallskip

\paragraph{Smoothness} 
All vector fields are assumed to be smooth, \textit{i.e.}\ infinitely differentiable, 
although the reader may check that all proofs work if smooth is replaced by
differentiable with a locally Lipschitz continuous first derivative.

\smallskip

\paragraph{Periodicity of solutions of the drift}
This Property \eqref{per} of $\vecfield^0$, discussed in \cref{sec-sys},
is assumed in \cref{thm:CDC-gen}, that was a primary motivation of this work, but is \emph{not}
a running assumption.
Also, we indicate when $\pi:\bigmanif\to\manifM$ is the fibration induced by \eqref{per},
with base $M$ equal to the respective orbit space; it is otherwise an arbitrary
fibration, unrelated to $\vecfield^0$, with $M$ of arbitrary dimension $n\leq d$.

\smallskip

\paragraph{The control set $U$} It is always assumed to satisfy \eqref{eq:assU}.
We also need $U$ to be convex and compact in \cref{sec-xbar,sec-conseq}
(\cref{thm:xbar,thm:1tour}, see \cref{rem-U}).



\medskip

\paragraph{Notation} For a subset $S$ of a real vector space $E$, its \emph{conic hull}
is the smallest convex cone containing $S$, i.e. the set of \emph{conic combinations} of elements of
$S$:
\begin{equation}
  \label{eq:defcone}
  \cone S=\bigl\{\,\lambda_1\,x_1+\cdots+\lambda_k\,x_k,\;
x_i\in S,\, \lambda_i\in[0,+\infty),\,i\in\{1\ldots k\},\,k\in\NN\,\bigr\}
\,.
\end{equation}

\subsection{Global controllability under the periodicity assumption on the drift}
\label{sec-jurdj-results}

Let us now state our generalization of the now classical
\cref{th:BB} on global controllability.
Here, the fibration $\pi$ is the one induced by the periodic
drift, see \cref{prop-pi}.
Global controllability is as in \cref{def:controllabilty-glob};
as noticed in \cref{rem-2}, it is equivalent, in the present case, to ``global
controllability with respect to $\pi$''.

\begin{theorem}
  \label{thm:CDC-gen}
  Assume that $U$ satisfies \eqref{eq:assU},
  that $\vecfield^0$ satisfies \eqref{per} (periodicity of all solutions),
  inducing a fibration $\pi$ as in \cref{prop-pi},
  and that
    \begin{equation}
      \label{eq:10}
      \pi'(x)\bigl(\cone E^U\!(x)\bigr) =  T_{\pi(x)}\manifM
      \ \ \text{for all $x$ in $\bigmanif$.}
     \end{equation}
Then System~\eqref{sys} is globally controllable,
and \emph{a fortiori} globally controllable with respect to $\pi$.
Under the same conditions, global controllability holds as well in backward time, \emph{i.e.}
$\Acc^U_{\mathbf{-}}(x)=\bigmanif$ for all $x$ in $\bigmanif$.
\end{theorem}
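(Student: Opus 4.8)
The plan is to reduce everything to \emph{local controllability in the base} $\manifM$ and then to propagate it globally using the periodicity of the drift. I would first record two reductions. By \cref{rem-2}, under \eqref{per} it suffices to prove global controllability with respect to the fibration $\pi$, that is $\pi(\Acc^U(x))=\manifM$ for every $x$; the passage back to full global controllability is then supplied by \cref{rem-ext}, which lets one slide along an orbit with the pure drift. Moreover the backward statement is not extra work: $\Acc^U_{\mathbf{-}}(x)=\bigmanif$ for all $x$ and $\Acc^U(x)=\bigmanif$ for all $x$ are literally the same assertion (both say that every ordered pair of points is joinable by a forward trajectory), so the backward conclusion comes for free once the forward one is established. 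Note also that $\bigmanif$ is connected by assumption, which is what will eventually be exploited.

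The core is the local claim: for each $x$, the set $\pi(\Acc^U(x))$ is a neighborhood of $\pi(x)$ in $\manifM$, and this is where \eqref{eq:10} enters. I would use that $\exp(T(x)\vecfield^0)(x)=x$, so that the endpoint after exactly one period lies again in $T_x\bigmanif$, together with $\pi\circ\exp(t\vecfield^0)=\pi$, which gives $\pi'(x)\circ D\exp(T(x)\vecfield^0)|_x=\pi'(x)$. Writing the first variation of the endpoint of \eqref{sys} over one period $[0,T(x)]$ about the zero control, the projected first-order displacement equals $\int_0^{T(x)}\pi'(x)\bigl(\sum_k\deltau_k(s)(\exp(-s\vecfield^0)_*\vecfield^k)(x)\bigr)\,ds$, whose integrand ranges exactly over $\pi'(x)E^U_s(x)$ as $\deltau(s)$ runs over $U$ (compare \eqref{eq:1}). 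Concentrating the control into short needles of nonnegative durations $\lambda_0,\dots,\lambda_n$ at distinct phases $\tau_0,\dots,\tau_n\in[0,T(x)]$ with amplitudes $u^{(0)},\dots,u^{(n)}\in U$ produces, to first order, the projected direction $\sum_j\lambda_j v_j$ with $v_j=\pi'(x)\bigl(\sum_k u^{(j)}_k(\exp(-\tau_j\vecfield^0)_*\vecfield^k)(x)\bigr)\in\pi'(x)E^U(x)$. By \eqref{eq:10} the conic hull of these projected vectors is all of $T_{\pi(x)}\manifM$, equivalently $0\in\Int\co\{v_0,\dots,v_n\}$ for a suitable finite selection (Carathéodory); it is precisely the \emph{conic} hull that is needed, since a needle contributes only a nonnegative multiple of $v_j$, which is also what forces the use of unprescribed time rather than a fixed horizon. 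Hence the map $\lambda\mapsto\sum_j\lambda_j v_j$ on the cube $[0,\varepsilon]^{n+1}$ vanishes at an \emph{interior} point $\lambda=\varepsilon\mu$ (with $0=\sum_j\mu_j v_j$, $\mu_j>0$) and has surjective differential there.

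The genuinely delicate step is to upgrade this \emph{first-order} surjectivity to openness of the actual \emph{nonlinear} reachable set while keeping the control valued in $U$. One cannot invoke the classical ``controllable linearization $\Rightarrow$ locally controllable'' shortcut, precisely because $0$ need not lie in the interior of $U$. I would instead close the argument by a topological (Brouwer degree / fixed-point) argument applied to the map $\lambda\mapsto\pi\bigl(x(T(x);u_\lambda)\bigr)$, where $u_\lambda$ is the $U$-valued needle control above: this map is continuous, equals $\pi(x)$ at $\lambda=\varepsilon\mu$, and is modelled at first order by the surjection just computed, whence it covers a full neighborhood of $\pi(x)$. This is the very mechanism underlying \cref{thm:xbar} and the control-variation lemma cited in \cref{sec-notreE}; I expect this nonlinear step to be the main obstacle, the remainder being soft.

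Finally I would globalize. The concatenation inclusion $\Acc^U(y)\subset\Acc^U(x)$ for $y\in\Acc^U(x)$, together with the local claim applied at each point, shows immediately that $\pi(\Acc^U(x))$ is open. Running the same first-order construction with reversed roles—the hypothesis $0\in\Int\co(\pi'(x)E^U(x))$ is symmetric, yielding \emph{two-sided} local controllability in the base, with a locally uniform radius by continuity of $x\mapsto v_j(x)$ and compactness—shows that the two-sided accessibility relation on $\bigmanif$, combined with the freedom to move along fibers granted by \eqref{per} and \cref{rem-ext}, has open equivalence classes. Since $\bigmanif$ is connected there is a single class, i.e. $\Acc^U(x)=\bigmanif$ for all $x$, which is the announced global controllability (and, as noted above, its backward counterpart). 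This is exactly the generalization of \cref{th:BB} sought, with the bracket-generating hypothesis of \cref{th:BB} replaced by the constraint-aware condition \eqref{eq:10}.
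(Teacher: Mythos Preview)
Your argument is correct in outline and shares the paper's skeleton: build a finite-dimensional map whose derivative at the origin sends the positive orthant onto the full tangent space, invoke the Brouwer-based open mapping lemma (\cref{lem-open-map}) to get a local neighborhood, then globalize by an open-and-closed (equivalently, open-equivalence-class) argument on the connected manifold. The paper's primary proof packages the first step differently: instead of needle variations projected to $\manifM$, it works directly in $\bigmanif$ by augmenting the family $\mathcal{G}=\{\vecfield^0+\sum u_k\vecfield^k:u\in U\}$ first with $-\vecfield^0$ (legitimated by periodicity) and then with all pushforwards under the drift flow, obtaining a family $\mathcal{G}_2$ with $\cone\mathcal{G}_2(x)=T_x\bigmanif$, and composes finitely many of these flows. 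This is the same needle mechanism in the Jurdjevic vector-field-family language; your projected version is closer to the paper's \emph{alternative} proof via \cref{thm:1tour}. One slip to fix: the nonlinear endpoint map $\lambda\mapsto\pi\bigl(x(T(x);u_\lambda)\bigr)$ equals $\pi(x)$ at $\lambda=0$, \emph{not} at the interior point $\varepsilon\mu$ (only the linearization vanishes there); \cref{lem-open-map}, applied at the vertex of the orthant, is precisely the substitute for the implicit-function step you are reaching for, and your aside about ``unprescribed time'' is off, since your own construction runs over the fixed horizon $T(x)$. Your observation that ``$\Acc^U(x)=\bigmanif$ for all $x$'' and ``$\Acc^U_{\mathbf{-}}(x)=\bigmanif$ for all $x$'' are tautologically the same statement is correct and slightly slicker than the paper's sign-reversal argument.
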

\noindent
  Using the local product form
\eqref{sys-Iphi} and the notation $\breve{E}$ from \eqref{eq:26a},
Condition~\eqref{eq:10} for all $x$ in $\bigmanif$ equates to
the following relation for all $I$:
\begin{equation}
  \label{eq:10bis}
  \cone  \breve{E}^U\!(I) = T_I\manifM
  \ \ \text{for all $I$ in $\manifM$.}
\end{equation}

\begin{proof}
  We prove controllability in forward time; backward time follows upon changing each
  vector field $\vecfield^i$ into $-\vecfield^i$, preserving all assumptions of the theorem.
The accessible set $\AccFam_{\mathcal{F}}(x)$ of
    a family $\mathcal{F}$ of vector fields was defined in the
    first paragraph of \cref{sec:LieBra}. 
    The family $\mathcal{G}$ was defined in \eqref{eq:cdc1}, and we
    noted there that it satisfies
    $\AccFam_{\mathcal{G}}(x)\subset\Acc^U\!(x)$ for all $x$.
    The conclusion of the theorem is then obviously implied by
    $\AccFam_{\mathcal{G}}(x)=\bigmanif$ for all $x$ in $\bigmanif$,
    that we are going to prove.

Define, from $\mathcal{G}$, the families $\mathcal{G}_1$ and $\mathcal{G}_2$ by
\begin{equation}
  \label{eq:cdc2}
  \mathcal{G}_1= \mathcal{G}\cup \{-\vecfield^0\}
  \quad \text{and}\quad
  \mathcal{G}_2=\{\,\exp(t\,\vecfield^0)_* X\,,\;X\in\mathcal{G}_1,\,t\in\RR\}\,, 
\end{equation}
where the push-forward notation $\exp(t\,\vecfield^0)_* X$ was recalled in \eqref{pushforward}.
Obviously, $\mathcal{G}\subset\mathcal{G}_1\subset\mathcal{G}_2$.
Let us first prove that, for all $x$, 
\begin{equation}
  \label{eq:cdc-7}
  \AccFam_{\mathcal{G}}(x)=
  \AccFam_{\mathcal{G}_1}(x)=
  \AccFam_{\mathcal{G}_2}(x)\,.
\end{equation}
The family $\mathcal{G}$ contains $\vecfield^0$ because $0\in U$, see \eqref{eq:assU};
hence $\exp(t\vecfield^0)(x)\!\in\!\AccFam_{\mathcal{G}}(x)$ for any $x$
in $\bigmanif$ and $t\!\geq\!0$; it is also true if $t\!<\!0$
because \eqref{per} implies
$\exp(t\vecfield^0)(x)=\exp\bigl((t+kT)\vecfield^0\bigr)(x)$,
where the integer $k$ can be taken large enough that
$t+kT(x)\!\geq\!0$;
the reverse inclusion being clear, this proves the first equality.
Turning to the second one, consider some $y$ in $\AccFam_{\mathcal{G}_2}(x)$. By definition,
$y=\exp(s_1Y^1)\circ\cdots\circ\exp(s_{\!N}Y^{\!N})(x)$ for some
positive integer $N$, and some positive
real numbers $s_j$ and vector fields $Y^j$, $j\in\{1,\ldots,N\}$, belonging to $\mathcal{G}_2$, such that, for some real number $t_j$ and 
some $Z^j\in\mathcal{G}_1$, $Y^j=\exp(t_j\vecfield^0)_*Z^j$. Since the
latter implies
$\exp(s_j\,Y^j)=\exp(t_j\vecfield^0)\circ\exp(s_1\,Z^1)\circ\exp(-t_j\vecfield^0)$,
and both $\vecfield^0$ and $-\vecfield^0$ belong to $\mathcal{G}_1$,
the above yields $y$ as the image of $x$ by a composition of $3N$ flows in
positive time of vector fields belonging to $\mathcal{G}_1$. Hence
$y\in\AccFam_{\mathcal{G}_1}(x)$, which proves the second equality in
\eqref{eq:cdc-7}, the reverse inclusion being, again, obvious. This
ends the proof of \eqref{eq:cdc-7}.

For any $x$ in $\bigmanif$, \eqref{eq:10} implies $\cone\mathcal{G}_2(x)\!=\!T_x\bigmanif$,
hence there are $d+1$ vector fields $Y^0,\ldots,Y^d$ in $\mathcal{G}_2$ such that
$Y^0(x),\ldots,Y^d(x)$ generate $T_x\bigmanif$ by conic combinations.
Let $\phi\!:(-\alpha,\alpha)^d\to\bigmanif$ (with $\alpha\!>\!0$
small enough so that the flows are defined) be the
smooth map given by
$\phi(s_0,\ldots,s_d) = \exp(s_0Y^0)\circ\cdots\circ\exp(s_dY^d)(x)$,
and apply \cref{lem-open-map} (see below)
with $k\!=\!d+1$, $W^k\!=\!(-\alpha,\alpha)^{d+1}$, $N\!=\!\bigmanif$
and $G\!=\!\phi$.
Since $\phi'(0)\bigl(\RR^k_+\bigr)=T_{\phi(0)}\bigmanif$ (this follows
from the fact that $\partial\phi/\partial s_i(0)\!=\! Y^i(x)$, $i\in\{0\ldots d\}$, and the conic
combination property of $Y^0 \ldots Y^d$ stated above),
the lemma tells us 
that $\phi(W^k_+)$ is a neighborhood of $x$.
Since $\phi(W^k_+)=\phi([0,\alpha)^{d+1})$ is contained in
$\AccFam_{\mathcal{G}_2}(x)=\AccFam_{\mathcal{G}}(x)$ (see \eqref{eq:cdc2}), we
have shown the following property, valid also for $\AccFam_{-\mathcal{G}}(x)$ by changing each
$\vecfield^i$ into $-\vecfield^i$:
\begin{equation}
  \label{eq:114}
  \text{for all $x$ in $\bigmanif$, }
  \AccFam_{\mathcal{G}}(x) \text{ and } \AccFam_{-\mathcal{G}}(x) \text{ are neighborhoods of $x$ in $\bigmanif$.}
\end{equation}
Any $\AccFam_{\mathcal{G}}(x)$ is open because, 
for $y$ in $\AccFam_{\mathcal{G}}(x)$,
applying \eqref{eq:114} to $\AccFam_{\mathcal{G}}(y)$,
the latter is a neighborhood of $y$, contained in $\AccFam_{\mathcal{G}}(x)$ by
definition. 
Any $\AccFam_{\mathcal{G}}(x)$ is also closed: picking  
$y$ in its topological closure,
$\AccFam_{-\mathcal{G}}(y)$ is a neighborhood of $y$ according to
\eqref{eq:114}, so $\AccFam_{-\mathcal{G}}(y)$ intersects $\AccFam_{\mathcal{G}}(x)$ and 
$y$ must belong to $\AccFam_{\mathcal{G}}(x)$. Being an open, closed, nonempty subset of the 
connected manifold $\bigmanif$, $\AccFam_{-\mathcal{G}}(y)$ is equal to $\bigmanif$.
\end{proof}

The following lemma is used in the above proof, and in the proof
of \cref{thm:xbar}.
\begin{lemma} [Open mapping lemma, see
  \mbox{\cite[Theorem~2.1.3]{Sussmann1998}},
  \mbox{\cite[Lemma~12.4]{agrachev_control_2004}}, or
  \mbox{\cite[Theorem 5]{Grav50}}]
  \label{lem-open-map}
  With $k$ a positive integer, let $\RR^k_+$ be the positive orthant in $\RR^k$:
\begin{equation}
  \label{eq:orthant}
  \RR^k_+=\{(w_1,\ldots,w_k)\!\in\!\RR^k\!,\,w_i\!\geq\!0,\,i\!=\!1\ldots k\}\,,
\end{equation}
  $W^k$ an open neighborhood of $0\!\in\!\RR^k$, and
  $W^k_+\!=\!W^k\cap\RR^k_+$.
    If $G\!:W^k\to N$ is a smooth map, with $N$ a smooth manifold, such that
    $G'(0)\!\cdot\! \RR^k_+=T_{G(0)}N$, then $G(V \cap W^k_+)$ is a neighborhood of
    $G(0)$ in $N$ for any neighborhood $V$ of the origin in $\RR^k$.
\end{lemma}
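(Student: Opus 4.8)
The plan is to reduce the statement to a finite-dimensional, $n$-dimensional (with $n:=\dim N$) open-mapping fact and then settle it by Brouwer degree theory. First I would compose with a chart of $N$ centred at $G(0)$, so that it suffices to treat $N=\RR^n$ with $G(0)=0$; put $v_i:=G'(0)e_i\in\RR^n$ for the images of the standard basis of $\RR^k$. The hypothesis $G'(0)\cdot\RR^k_+=T_{G(0)}N$ reads $\cone\{v_1,\dots,v_k\}=\RR^n$, which by a separation argument is equivalent to $0\in\Int\co\{v_1,\dots,v_k\}$; in particular $G'(0)$ is onto $\RR^n$, and summing the nonnegative representations $-v_j=\sum_i\lambda_{ij}v_i$ of each $-v_j$ produces coefficients $c_i=1+\sum_j\lambda_{ij}>0$ with $\sum_i c_i v_i=0$.

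Next I would manufacture the right $n$-dimensional domain inside the positive orthant. Let $\Sigma=\{s\in\RR^k_+:\sum_i s_i=1\}$ and let $s^\ast\in\Int\Sigma$ be the point with coordinates $c_i/\sum_j c_j$, so that $G'(0)s^\ast=0$. Because $\co\{v_i\}$ has nonempty interior, the restriction of $G'(0)$ to the direction space $H=\{\sum_i s_i=0\}$ of $\Sigma$ is still onto $\RR^n$; I then pick an $n$-dimensional affine plane $P\ni s^\ast$ inside the affine hull of $\Sigma$ on which $A:=G'(0)|_P$ is an affine isomorphism onto $\RR^n$, and a small closed $n$-disk $D\subset P\cap\Int\Sigma$ centred at $s^\ast$. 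Then $A(D)$ is a closed $n$-disk around $A(s^\ast)=0$; I fix $\rho'>0$ and $\delta>0$ with $\bar B(0,\rho')\subset\Int A(D)$ and $\mathrm{dist}\!\bigl(\bar B(0,\rho'),A(\partial D)\bigr)\ge\delta$.

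The crux is to transfer this surrounding property from the linear model $A$ to $G$ itself, and I expect this to be the main obstacle. For small $t>0$ define the rescaled map $G_t:D\to\RR^n$ by $G_t(s)=\tfrac1t\,G(ts)$; differentiability of $G$ at $0$ gives $G_t\to A$ uniformly on the compact set $D$ as $t\to0^+$. Since $A$ is an affine isomorphism of the $n$-disk $D$, one has $\deg(A,\Int D,p)=\pm1$ for every $p\in\Int A(D)$. For $t$ small enough that $\|G_t-A\|_{\infty,D}<\delta$, the image $G_t(\partial D)$ stays off $\bar B(0,\rho')$, so homotopy invariance of the degree yields $\deg(G_t,\Int D,p)=\pm1\neq0$, hence $p\in G_t(\Int D)$, for every $p\in\bar B(0,\rho')$. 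This is exactly where topological degree (the content of the proofs in \cite{Sussmann1998,Grav50}) is indispensable: uniform closeness of two maps does not by itself preserve images, but it does preserve degree, and a nonzero degree forces solvability. We thus obtain $\bar B(0,\rho')\subset G_t(D)$, that is $\bar B(0,t\rho')\subset G(tD)$.

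It remains to assemble the conclusion. The scaled disk $tD$ lies in $\RR^k_+$ (all coordinates on $\Int\Sigma$ are strictly positive, and scaling preserves positivity) and shrinks to $\{0\}$ as $t\to0^+$, so for any prescribed neighbourhood $V$ of the origin in $\RR^k$ one has $tD\subset V\cap W^k_+$ once $t$ is small. Therefore $G(V\cap W^k_+)\supset G(tD)\supset\bar B(0,t\rho')$, which is a neighbourhood of $0=G(0)$ in the chart, and hence of $G(0)$ in $N$. Reading this back through the chart completes the proof.
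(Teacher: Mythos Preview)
Your argument is correct. The paper does not supply its own proof of this lemma: it only cites \cite{Sussmann1998,agrachev_control_2004,Grav50} and remarks that those references proceed via Brouwer's fixed point theorem (under hypotheses even weaker than smoothness), so your route through Brouwer degree---rescaling $G_t(s)=\tfrac1t G(ts)$ on a small $n$-disk inside the open simplex and invoking homotopy invariance---is exactly the topological mechanism the paper is pointing to.
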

We state the smooth case because it is what we need. The
proof in \cite{agrachev_control_2004,Sussmann1998} uses Brouwer's
fixed point theorem under the much
weaker assumption that $G$ is continuous and
differentiable at zero (Lipschitz continuous in \cite{agrachev_control_2004}).
In \cite{Grav50}, $\RR^k$ and $N$ are replaced by Banach spaces under a condition slightly
weaker than continuous differentiability; that result also implies \cref{lem-open-map}.

\subsection{Local controllability around a particular solution}
\label{sec-xbar}
Let us consider 
\begin{equation}
  \label{eq:014}
t\mapsto\bar x(t)\,,\ \text{solution of}\ \dot x=\vecfield^0(x), \
\text{defined on the time-interval }[0,\tf]
\end{equation}
(the real number $\tf$ may be negative, $[0,\tf]$ should then be
understood as $[\tf,0]$),
and define the following subset of $T_{\bar x(0)}\bigmanif$, attached to the solution $\bar x(.)$:
\begin{equation}
  \label{eq:19}
  \bar E=E^U_{[0,\tf]}(\bar{x}(0))\,,
\end{equation}
with $E^U_{[0,\tf]}\!(x)$ defined in \eqref{eq:EUt1t2}.
Next theorem gives, in terms of $\bar E$, sufficient conditions for
local controllability along $\bar x(.)$.

Local controllability (with respect to $\pi$ or not) is introduced in
\cref{def:controllabilty-loc}, and \cref{rem-condloc} explains why Conditions \eqref{eq:012} and
  \eqref{eq:013} are stronger.
  Part~I of \cref{def:controllabilty-loc} is a particular case of Part~II, with $\manifM\!=\!\bigmanif$ and
  $\pi\!=\!\mathrm{Id}$; it is stated independently for readability
  ($\pi\!=\!\mathrm{Id}$ yields the simpler sufficient Condition \eqref{eq:13}).
\begin{theorem}
  \label{thm:xbar}
  Consider System \eqref{sys} with $U$ a convex compact 
  subset of $\RR^m$ containing the origin, and let $\bar x(.)$ and $\bar E$ be as
  described above.

\noindent
  \textup{I.} If
  \begin{equation}
  \label{eq:13}
  \cone \bar E = T_{\bar x(0)}\bigmanif\,,
\end{equation}
then System \eqref{sys} is locally controllable around $\bar x(.)$ and, more precisely,
\begin{equation}
 \label{eq:012}
  \text{$\Acc^{\varepsilon U}_{\tf}\!(\bar x(0))$ is a neighborhood of $\bar x(\tf)$ in $\bigmanif$ for any $\varepsilon>0$.}
\end{equation}

\noindent
\textup{II.}  Let $\pi\!:\bigmanif\to\manifM$ be a smooth fibration
  with $\manifM$ of dimension $n\leq d$.
  If
\begin{equation}
  \label{eq:13bb}
    \left( \pi\circ\exp(\tf\vecfield^0) \right)'\!\!(\bar x(0))\,
  (\cone \bar E) = 
T_{\pi(\bar x(\tf))}\manifM\,,
\end{equation}
then System \eqref{sys} is locally controllable around $\bar x(.)$ with respect to $\pi$; more precisely,
\begin{equation}
 \label{eq:013}
\pi\!\bigl(\Acc^{\varepsilon U}_{\tf}\!(\bar x(0))\bigr)
\text{is a neighborhood of $\pi(\bar x(\tf))$ in $\manifM$ for any $\varepsilon>0$.}
\end{equation}
\end{theorem}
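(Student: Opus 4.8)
The plan is to deduce both parts from the open mapping lemma (\cref{lem-open-map}), applied to a finite-dimensional smooth family of admissible controls, after identifying the first-order effect of control variations with the set $\bar E$ of \eqref{eq:19}. Write $\bar x(t)=\exp(t\vecfield^0)(\bar x(0))$ and let $\Phi_{t,s}=\left(\exp((t-s)\vecfield^0)\right)'(\bar x(s))$ be the state-transition map of the variational equation $\dot{\delta x}=D\vecfield^0(\bar x(t))\,\delta x$ along $\bar x(.)$. Regarding an admissible $\varepsilon U$-valued control as a perturbation of the reference control $u\equiv0$, the first-order variation of the endpoint is
\[
  \delta x(\tf)=\int_0^{\tf}\Phi_{\tf,s}\sum_{k=1}^m u_k(s)\,\vecfield^k(\bar x(s))\,\mathrm{d}s .
\]
From the pushforward definition \eqref{pushforward} one checks that $\Phi_{\tf,s}\vecfield^k(\bar x(s))=\Phi_{\tf,0}\,\bigl(\exp(-s\vecfield^0)_*\vecfield^k\bigr)(\bar x(0))$, with $\Phi_{\tf,0}=(\exp(\tf\vecfield^0))'(\bar x(0))$ invertible. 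Hence the family of instantaneous directions $\{\Phi_{\tf,s}\sum_k u_k\vecfield^k(\bar x(s)):s\in[0,\tf],\,u\in U\}$ available to the linearised endpoint map is exactly $\Phi_{\tf,0}(\bar E)$, so \eqref{eq:13} says precisely that these directions conically span $T_{\bar x(\tf)}\bigmanif$, i.e.\ the control-constrained linear approximation along $\bar x(.)$ is locally controllable; \eqref{eq:13bb} is its analogue after post-composition with $\pi'(\bar x(\tf))$.

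Since these directions generate the tangent space only as a \emph{cone} (and $0$ may lie on the boundary of $\co U$), the inverse function theorem does not apply, and I would instead produce a finite-dimensional reduction to feed into \cref{lem-open-map}. Assuming \eqref{eq:13}, each of the $2d$ vectors $\pm e_i$ (a positive spanning set of $T_{\bar x(0)}\bigmanif$) is a finite conic combination of elements of $\bar E$; collecting the finitely many elements used yields $v_1,\dots,v_N\in\bar E$ with $\cone\{v_1,\dots,v_N\}=T_{\bar x(0)}\bigmanif$, each of the form $v_j=\sum_k u^{(j)}_k\bigl(\exp(-\tau_j\vecfield^0)_*\vecfield^k\bigr)(\bar x(0))$ with $u^{(j)}\in U$ and $\tau_j\in[0,\tf]$. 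For small $\delta>0$ I choose pairwise disjoint intervals $I_j\subset[0,\tf]$ of length $\delta$, with $I_j$ close to $\tau_j$ (subdividing near a common value if several $\tau_j$ coincide), and for $s=(s_1,\dots,s_N)$ near $0$ define the smooth map $G(s)=x_s(\tf)$, where $x_s$ solves \eqref{sys} with control equal to $s_j\,\varepsilon u^{(j)}$ on $I_j$ and $0$ elsewhere. For $s\in[0,1]^N$ this control is $\varepsilon U$-valued by convexity and $0\in\varepsilon U$, so $G([0,1]^N)\subset\Acc^{\varepsilon U}_{\tf}(\bar x(0))$, and $G(0)=\bar x(\tf)$, while
\[
  \frac{\partial G}{\partial s_j}(0)=\varepsilon\int_{I_j}\Phi_{\tf,\sigma}\sum_k u^{(j)}_k\vecfield^k(\bar x(\sigma))\,\mathrm{d}\sigma ,
\]
which by continuity of the integrand differs from $\varepsilon\,\delta\,\Phi_{\tf,0}v_j$ by $o(\delta)$.

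Because positive spanning of $T_{\bar x(\tf)}\bigmanif$ by $\{\Phi_{\tf,0}v_j\}$ (equivalently $0\in\Int\co\{\Phi_{\tf,0}v_j\}$) is an open, hence stable, condition, for $\delta$ small enough the vectors $\partial G/\partial s_j(0)$ themselves positively span $T_{\bar x(\tf)}\bigmanif$, so $G'(0)\cdot\RR^N_+=T_{G(0)}\bigmanif$. \Cref{lem-open-map} (with $k=N$, $N=\bigmanif$) then gives that $G([0,\sigma)^N)$, a subset of $\Acc^{\varepsilon U}_{\tf}(\bar x(0))$, is a neighborhood of $\bar x(\tf)$, which is \eqref{eq:012}; local controllability around $\bar x(.)$ follows via \cref{rem-condloc}. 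Part~II is obtained verbatim upon replacing $G$ by $\pi\circ G$: under \eqref{eq:13bb} the generators $v_j\in\bar E$ are chosen so that $\{\pi'(\bar x(\tf))\Phi_{\tf,0}v_j\}$ positively span $T_{\pi(\bar x(\tf))}\manifM$, the same averaging and stability argument gives $(\pi\circ G)'(0)\cdot\RR^N_+=T_{\pi(\bar x(\tf))}\manifM$, and \cref{lem-open-map} with $N=\manifM$ yields \eqref{eq:013}; Part~I is the special case $\pi=\mathrm{Id}$.

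The main obstacle is exactly this constrained-control, empty-interior situation: one must replace the inverse function theorem by the conic open mapping lemma, and in doing so verify that the genuine time-averaged variational directions produced by admissible controls still positively span after the idealised needle directions $\Phi_{\tf,0}v_j$ have been perturbed by the integration over $I_j$. The case $\tf<0$ is handled by the symmetric construction, with $[0,\tf]$ read as $[\tf,0]$.
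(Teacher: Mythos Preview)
Your proof is correct and, like the paper's, ultimately rests on \cref{lem-open-map}; the difference lies in how the finite-dimensional map $G$ is constructed. You use needle variations: pick finitely many pairs $(\tau_j,u^{(j)})$ witnessing that $\bar E$ conically spans, build piecewise constant controls supported on short intervals $I_j$, and then invoke a stability argument (positive spanning is open) to pass from the idealised directions $\Phi_{\tf,0}v_j$ to the averaged ones $\partial G/\partial s_j(0)$. The paper instead goes through \cref{lem:lin} and the appendix: Condition~\eqref{eq:13bb} is shown equivalent to $\pi'(\bar x(\tf))\bigl(\AccLin^U_{0,\tf}(0)\bigr)$ being a neighborhood of the origin, whence one selects $n{+}1$ points $e_0,\dots,e_n$ in that set forming the vertices of a simplex around $0$, together with $U$-valued $L^\infty$ controls $\mathfrak u_i$ achieving them exactly for the linearised system, and sets $G(\lambda)=\pi\circ\mathcal E\bigl(\sum_i\lambda_i\mathfrak u_i\bigr)$. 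Because the derivative of the nonlinear end-point map at $0$ is the linear end-point map, one gets $\partial G/\partial\lambda_i(0)=e_i$ on the nose, with no perturbation step; convexity of $U$ then guarantees $\sum_i\lambda_i\mathfrak u_i$ is $\varepsilon U$-valued when $\lambda\in\RR^{n+1}_+$ and $\sum\lambda_i\le\varepsilon$. Your route is more elementary in that it bypasses the characterisation of the constrained linear reachable set (\cref{prop:lin,prop:lin-proj}); the paper's route is tidier in that it avoids the $o(\delta)$ bookkeeping and needs only $n{+}1$ control directions rather than an unspecified $N$. One small omission on your side: you should note, as the paper does, that a cut-off of the vector fields (or smallness of $s$) ensures solutions exist on all of $[0,\tf]$ so that $G$ is defined on a full neighborhood of $0$.
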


\begin{remark} 
  \label{rmk-Ebar-depend-de-tf}
  Since  $\bar{x}(.)$ and $\tf$ are
  fixed in the theorem, we use the short notation $\bar E$; this 
    should not hide the dependence of this set on the 
    initial condition and more interestingly on $\tf$.
    It is clear from \eqref{eq:EUt1t2} that $E^U_{[0,t]}(x)$ grows with $t>0$ for fixed $x$;
    hence, if one follows the same solution (assumed to be defined on $[0,+\infty)$), either
    Condition \eqref{eq:13} is satisfied for all $\tf$
    or it is not satisfied for small values of $\tf$, but is satisfied, if ever, for $\tf$ larger than a
    certain value.
  Note that it is not necessarily true for \eqref{eq:13bb}, unless $\pi$ is invariant by
  the flow of $\vecfield^0$ ($\pi\circ\exp(t\vecfield^0)=\pi$), which is indeed the case
  when \eqref{per} holds and $\pi$ is associated to periodic orbits of $\vecfield^0$.
\end{remark}
\begin{remark}[assumptions on $U$]
  \label{rem-U}
    \Cref{thm:xbar,thm:1tour}, unlike \cref{thm:CDC-gen}, 
    assume $U$ convex and compact.
    Compactness is natural; it a priori avoids unbounded controls and
    allows $\varepsilon U$ to be small for small $\varepsilon>0$.
    Convexity is more restrictive, it is technically needed in the proofs, and could
    probably be relaxed. 
    If $U$ is not convex, the same conclusions hold, replacing
    $\Acc^{\varepsilon U}_{\tf}\!(\bar x(0))$ with its topological closure.
\end{remark}

Before proving \cref{thm:xbar}, let us recall that the linear approximation of the
control system \eqref{sys} along the solution $\bar x(.)$ is the following linear
time-varying system:
\begin{equation}
    \label{eq:24}
    \begin{split}
      \dot{\xi} = \,&\sum_{k=1}^m\deltau_k  \left(\exp(-t\vecfield^0)_*\vecfield^k\right)\!(\bar x(0)) \,,
      \\
      \deltax(t)=\,&\exp(t\vecfield^0)'(\bar x(0))\, \xi(t)\,,
    \end{split}
\end{equation}
with initial condition $\xi(0)=0$, where 
$\deltau=(\deltau_1,\ldots,\deltau_m)$ is a small variation of the control
around zero and $\deltax(t)$ the corresponding small variation of
the state around $\bar{x}(t)$
(in other words, $t\mapsto(\bar x(t)\!+\!\deltax(t),\deltau(t))$ is a
solution of \eqref{sys}).
Note that \eqref{eq:24} is the intrinsic way of writing
$\dot{\deltax}
  =
  \frac{\partial \vecfield^0}{\partial x}(\bar x(t))\,\deltax
  +\sum_1^m\deltau_k \, \vecfield^k(\bar x(t))$,
because $\xi$ is an element of the tangent space $T_{\bar x(0)}$ for all $t$, contrary to $\deltax$.
One may constrain the control $\deltau=(\deltau_1,\ldots,\deltau_m)$
to some
$U\subset\RR^m$
and define,
for this linear control-constrained time-varying system,
the accessible set
  for initial time zero, initial state $x_0$ and final time $t$ as
the following subset of $T_{\bar{x}(t)}\bigmanif$,
see also \eqref{eq:accTlin-full} in \cref{appendix-lin}: 
\begin{equation}
  \label{eq:accTlin}
  \begin{split}
    \ \hspace{-.9em}
    \AccLin^{U}_{0,t}(\deltax_0)=\{\deltax(t),\; \text{with }
    s\mapsto(\xi(s),\deltau(s),\deltax(s)) &\text{ solution of \eqref{eq:24} 
      on $[0,t]$},
    \\ 
    &\quad\deltax(0)\!=\!\deltax_0,\;
    \deltau([0,t])\!\subset\! U\}.\hspace{-.6em}
  \end{split}
\end{equation}
The following result states
that Condition \eqref{eq:13} is equivalent to controllability of the constrained linear
approximation around $\bar x(.)$ (obviously it also applies to \eqref{eq:10} and \eqref{eq:10bis}).
\begin{lemma}
  \label{lem:lin}
  \begin{enumerate}[label={\textbf{\textup{\Roman*.}}}]
  \item The linear constrained attainable set $\AccLin^U_{0,\tf}(0)$ is a neighborhood of
    the origin in $T_{\bar{x}(\tf)}\bigmanif$ if and only if Condition \eqref{eq:13} holds. 
  \item The set $\pi'(\bar x(\tf))\bigl(\AccLin^U_{0,\tf}(0)\bigr)$ is a neighborhood of
    the origin in $T_{\pi(\bar{x}(\tf))}\manifM$ if and only if Condition \eqref{eq:13bb} holds.
  \end{enumerate}
\end{lemma}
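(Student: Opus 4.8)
The plan is to reduce both statements to a single support-function computation for the reachable set of the linear system \eqref{eq:24}, exploiting that $0\in U$ anchors all relevant sets at the origin and makes them convex and compact. First I would eliminate the second line of \eqref{eq:24}: since $\deltax(\tf)=\exp(\tf\vecfield^0)'(\bar x(0))\,\xi(\tf)$ and $\exp(\tf\vecfield^0)'(\bar x(0))$ is a linear isomorphism $T_{\bar x(0)}\bigmanif\to T_{\bar x(\tf)}\bigmanif$, the set $\AccLin^U_{0,\tf}(0)$ is the image under this isomorphism of the set $\mathcal{R}:=\{\xi(\tf)\}$ of attainable endpoints of the $\xi$-equation. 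Integrating the first line of \eqref{eq:24} gives $\xi(\tf)=\int_0^{\tf}\sum_k\deltau_k(s)\,(\exp(-s\vecfield^0)_*\vecfield^k)(\bar x(0))\,ds$, so $\mathcal{R}$ is exactly the set of integrals of measurable selections of the set-valued map $s\mapsto E^U_s(\bar x(0))$ over $[0,\tf]$, with $E^U_s$ as in \eqref{eq:1}. Because $U$ is convex and compact, each $E^U_s(\bar x(0))$ is convex and compact, and a standard argument in linear control theory (weak-$*$ compactness of the $U$-valued controls together with convexity, \textit{cf.}\ \cref{appendix-lin}) shows $\mathcal{R}$ is convex and compact, with $0\in\mathcal{R}$.

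For a convex compact set $C$ containing the origin, $0\in\Int C$ holds if and only if its support function $h_C(p)=\sup_{v\in C}\langle p,v\rangle$ is strictly positive for every nonzero covector $p$ (the nontrivial direction uses the supporting hyperplane theorem). I would therefore compute $h_{\mathcal{R}}$. Maximizing the linear functional $\deltau\mapsto\langle p,\xi(\tf)\rangle=\int_0^{\tf}\langle p,\sum_k\deltau_k(s)(\exp(-s\vecfield^0)_*\vecfield^k)(\bar x(0))\rangle\,ds$ pointwise in time (a measurable selection of the pointwise argmax over the compact set $U$ exists) yields $h_{\mathcal{R}}(p)=\int_0^{\tf}h_{E^U_s(\bar x(0))}(p)\,ds$. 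On the other hand, $\bar E=\bigcup_{s\in[0,\tf]}E^U_s(\bar x(0))$ by \eqref{eq:19}--\eqref{eq:EUt1t2}, so $h_{\bar E}(p)=\sup_{s\in[0,\tf]}h_{E^U_s(\bar x(0))}(p)$, and Condition \eqref{eq:13}, namely $\cone\bar E=T_{\bar x(0)}\bigmanif$, is equivalent (again by separation) to $h_{\bar E}(p)>0$ for all $p\neq0$.

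The crux, and the step I expect to require the most care, is identifying the two conditions $h_{\mathcal{R}}(p)>0$ (an integral being positive) and $h_{\bar E}(p)>0$ (a supremum being positive). Here the integrand $s\mapsto h_{E^U_s(\bar x(0))}(p)=h_U(L_s^*p)$, where $L_s$ is the linear map $u\mapsto\sum_k u_k(\exp(-s\vecfield^0)_*\vecfield^k)(\bar x(0))$, is nonnegative (because $0\in U$) and continuous in $s$ (since $s\mapsto L_s$ is smooth and $h_U$ is Lipschitz). A nonnegative continuous function on $[0,\tf]$ has positive integral if and only if it is positive at some point; hence $h_{\mathcal{R}}(p)>0\Leftrightarrow h_{\bar E}(p)>0$. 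Combined with the two support-function characterizations, this proves Part~I.

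For Part~II I would transport everything by the surjective linear map $A:=(\pi\circ\exp(\tf\vecfield^0))'(\bar x(0))$ from $T_{\bar x(0)}\bigmanif$ onto $T_{\pi(\bar x(\tf))}\manifM$, noting that the chain rule gives $\pi'(\bar x(\tf))\bigl(\AccLin^U_{0,\tf}(0)\bigr)=A\,\mathcal{R}$. Then $A\mathcal{R}$ is again convex compact containing the origin, with $h_{A\mathcal{R}}(q)=h_{\mathcal{R}}(A^*q)$; since $A$ is onto, $A^*$ is injective, so $q\neq0\Rightarrow A^*q\neq0$. By the computation of Part~I, $h_{A\mathcal{R}}(q)=h_{\mathcal{R}}(A^*q)>0$ for all $q\neq0$ is equivalent to $h_{\bar E}(A^*q)=h_{A\bar E}(q)>0$ for all $q\neq0$, i.e.\ to $\cone(A\bar E)=A(\cone\bar E)=T_{\pi(\bar x(\tf))}\manifM$, which is precisely Condition \eqref{eq:13bb}. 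This establishes Part~II.
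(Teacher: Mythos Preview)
Your proof is correct and follows essentially the same route as the paper, which simply invokes \cref{prop:lin} and \cref{prop:lin-proj} from \cref{appendix-lin}; the separation argument you carry out with support functions is exactly the content of those propositions, applied with $t_0=0$ and $\phi(0,s)B(s)$ identified with the linear map $u\mapsto\sum_k u_k(\exp(-s\vecfield^0)_*\vecfield^k)(\bar x(0))$. Your explicit treatment of the step ``$\int_0^{\tf}h_{E^U_s}(p)\,ds>0\Leftrightarrow\sup_s h_{E^U_s}(p)>0$'' via nonnegativity and continuity of the integrand is precisely what the appendix sketches as ``it is not too difficult to see'', so the two arguments coincide in substance.
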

\begin{proof}
  This follows from \cref{prop:lin} for point I and
  \cref{prop:lin-proj} for point II
  in the Appendix, applied with $t_0=0$,
  with \eqref{eq:24} playing the role of \eqref{eq:44}.
  In coordinates, the $k$\textsuperscript{th} column of
  $B(s)$ is the coordinate vector of $\vecfield^k(\bar x(s))$,  
  and the $k$\textsuperscript{th} column of $\phi(0,s)B(s)$ is the
  coordinate vector of
  $\left(\exp(-s\vecfield^0)_*\vecfield^k\right)(\bar x(0))$.
\end{proof}

\begin{proof}[Proof of \cref{thm:xbar}]
  We give a proof of point II.
  For a proof of point I (particular case of point II)
  without reference to the projection $\pi$,
  replace $\manifM$ with $\bigmanif$, $n$ with $d$, $\pi$ with $I\hspace{-.2ex}d$,
  $\left( \pi\!\circ\!\exp(\tf\vecfield^0) \right)'\!\!(\bar x(0))\,
  \bigl(\bar E\bigr)$ with $\bar E$, and
  $\pi'(\bar x(\tf))\bigl(\AccLin^U_{0,\tf}(0)\bigr)$ with
  $\AccLin^U_{0,\tf}(0)$.
  Also,  according to \cref{rem-condloc}, Conditions \eqref{eq:012} and
  \eqref{eq:013} for arbitrarily small  $\varepsilon>0$ imply local controllability, ``with respect to $\pi$'' or
  not; hence we now prove only \eqref{eq:013}.

  According to \cref{lem:lin}, point II, Condition~\eqref{eq:13bb} implies that
  $\pi'(\bar x(\tf))\bigl(\AccLin^U_{0,\tf}\!(0)\bigr)$,
  projection of the accessible set of the linearized system \eqref{eq:24} in time $\tf$,
is a neighborhood of the origin in $T_{\pi(\bar{x}(\tf))}\manifM$.
Let $e_0,\ldots,e_n$ be the vertices of a convex polyhedron in $T_{\pi(\bar{x}(\tf))}\manifM$
that is both contained in $\pi'(\bar x(\tf))\bigl(\AccLin^U_{0,\tf}(0)\bigr)$ and a neighborhood of the origin.
Since $e_0,\ldots,e_n$ are in $\pi'(\bar x(\tf))\bigl(\AccLin^U_{0,\tf}(0)\bigr)$,
there are some $\hat e_0,\ldots,\hat e_n$ in $\AccLin^U_{0,\tf}(0)$
such that $e_i=\pi'(\bar x(\tf))\bigl(\hat e_i\bigr)$,
and there also exist
$\mathfrak{u}_0,\ldots, \mathfrak{u}_n$, some
$U$-valued $\mathrm{L}^\infty_{\mathrm{loc}}$-controls, defined on $[0,\tf]$, that steer the origin to
$\hat e_0,\ldots,\hat e_n$ respectively, in time $\tf$, for the linear time-varying system \eqref{eq:24}.

Going back to the nonlinear system \eqref{sys},
and possibly multiplying the vector fields $\vecfield^0,\ldots,\vecfield^m$  by
  some smooth cut-off function equal to 1 in a neighborhood of
  $\bar x([0,\tf])$ to guarantee that solutions all exist until $\tf$, and in fact
  until $+\infty$, 
let $\mathcal{E}:\mathrm{L}^\infty([0,\tf],\RR^m)\to\bigmanif$
be the end-point mapping at time $\tf$ starting from $\bar x(0)$ for the
nonlinear system \eqref{sys}.
Let also $\mathcal{E}^{\mathrm{lin}}$ be the end-point mapping at time
$\tf$ starting from the origin for the linear system \eqref{eq:24}.
It is well known~\cite{jurdjevic_geometric_1996,agrachev_control_2004} that $\mathcal{E}$ is continuously differentiable and that its
derivative at the zero control is $\mathcal{E}^{\mathrm{lin}}$.
Consider the continuously differentiable map $G:\RR^{n+1}\to\manifM$ defined by
\begin{equation}
  \label{eq:18}
  G(\lambda)=G(\lambda_0,\ldots,\lambda_n)
  =\pi\!\circ\!\mathcal{E}\,\Bigl(\sum_{i=0}^n\lambda_i\mathfrak{u}_i\Bigr)\,.
\end{equation}
Since $\mathcal{E}$ maps the zero control to $\bar x(\tf)$,
and $\frac{\partial G}{\partial\lambda_i}(0,\ldots,0)$, $i\in\{0,\ldots,n\}$,
is equal to
$\pi'(\bar x(\tf))\bigl(\mathcal{E}^{\mathrm{lin}}(\mathfrak{u}_i)\bigr)$
where, by construction,
$\mathcal{E}^{\mathrm{lin}}(\mathfrak{u}_i)\!=\!\hat{e}_i$
and $\pi'(\bar x(\tf))(\hat{e}_i)\!=\!e_i$,
one has
\begin{equation}
  \label{eq:33}
    G(0,\ldots,0)=\pi(\bar x(\tf)) \,,\ \ \;
    \frac{\partial G}{\partial\lambda_i}(0,\ldots,0)=e_i,\,
    \ \;i\in\{0,\ldots,n\}\,.
\end{equation}
The second relation implies that the linear map $G'(0)\!:\RR^{n+1}\!\to
T_{\pi(\bar x(\tf))}\manifM$ satisfies
\begin{equation}
  G'(0)\bigl(\RR^{n+1}_+\bigr)=T_{\pi(\bar x(\tf))}\manifM
\end{equation}
($\RR^{n+1}_+$ is defined in \eqref{eq:orthant})
because $\cone\{e_o,\ldots,e_n\}=T_{\pi(\bar x(\tf))}\manifM$.
Now, applying \cref{lem-open-map} with $k=n+1$, $N=\manifM$,
$W_k=\RR^{n+1}$ and
$V_\varepsilon=\{\lambda\in\RR^{n+1},\,\lambda_0+\cdots+\lambda_n\leq\varepsilon\}$, one
obtains that
$G(\RR^{n+1}_+\cap V_\varepsilon)$ is a neighborhood of  $\pi(\bar x(\tf))$ for any
$\varepsilon$, and this yields \eqref{eq:013} because, since $U$ is convex and contains
the origin, the control $\sum_{i=0}^n\lambda_i\mathfrak{u}_i$ takes values in $\varepsilon
U$ if each $\mathfrak{u}_i$ takes values in $U$, and $(\lambda_0,\ldots,\lambda_n)$ is in
$\RR^{n+1}_+\cap V_\varepsilon$.
\end{proof}

\subsection{Application to periodic solutions of the drift}
\label{sec-conseq}

\Cref{thm:xbar} applies, in particular, to periodic solutions of the drift
vector field under condition~\eqref{per}, yielding, as a direct consequence
(take $\tf=T(x)$, $\bar x(0)=\bar x(\tf)=x$) the following interesting complement to
\cref{thm:CDC-gen}.
\begin{theorem}
  \label{thm:1tour}
  Assume that the vector field $\vecfield^0$ satisfies the periodicity Assumption
  \eqref{per}, and that $U$ is convex compact and satisfies Conditions~\eqref{eq:assU}.
\begin{enumerate}[label={\textbf{\textup{\Roman*.}}}]
\item \label{item-1tour-1}
  If $\cone E^U\!(x)\!=\!T_x\bigmanif$, then
  $\Acc^{\varepsilon U}_{T(x)}(x)$ is a neighborhood of $x$ in $\bigmanif$ for all
  $\varepsilon\hspace{-.08em}>\hspace{-.08em}0$.
\item \label{item-1tour-2}
  Let $\pi$ be as in \cref{thm:CDC-gen}.
  If $\pi'\hspace{-.08em}(x) \left(\cone E^{U}(x)\right)=T_{\pi(x)}\manifM$, then\\
  $\pi\!\left(\Acc^{\varepsilon U}_{T(x)}(x)\right)$ is a neighborhood of $\pi(x)$ in $\manifM$
  for all $\varepsilon>0$. 
\end{enumerate}
  The same conclusions hold for $\Acc^{\varepsilon U}_{-T(x)}(x)$ and
  $\pi\!\left(\Acc^{\varepsilon U}_{-T(x)}(x)\right)$ (backward time) under the same assumptions.
\end{theorem}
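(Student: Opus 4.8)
The plan is to deduce both parts, in forward time, as direct instances of \cref{thm:xbar} applied to the closed orbit through $x$. Taking $\tf=T(x)$ and letting $\bar x(\cdot)$ be the periodic solution of $\dot x=\vecfield^0(x)$ with $\bar x(0)=x$, periodicity \eqref{per} gives $\bar x(\tf)=\bar x(T(x))=x$, and by the very definition \eqref{eq:2} one has $\bar E=E^U_{[0,\tf]}(x)=E^U(x)$. Hence condition \eqref{eq:13} of \cref{thm:xbar} becomes $\cone E^U(x)=T_x\bigmanif$, exactly the hypothesis of Part~I, and its conclusion \eqref{eq:012} reads ``$\Acc^{\varepsilon U}_{T(x)}(x)$ is a neighborhood of $x$'', which is what is claimed. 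For Part~II, I would use that $\pi$ is the fibration induced by the periodic orbits, so that $\pi\circ\exp(t\vecfield^0)=\pi$ for all $t$ (as recalled in \cref{rmk-Ebar-depend-de-tf}); differentiating at $x$ gives $(\pi\circ\exp(\tf\vecfield^0))'(x)=\pi'(x)$ and $\pi(\bar x(\tf))=\pi(x)$, so \eqref{eq:13bb} collapses to $\pi'(x)(\cone E^U(x))=T_{\pi(x)}\manifM$ and \eqref{eq:013} to the asserted statement about $\pi(\Acc^{\varepsilon U}_{T(x)}(x))$.

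The only genuine work is the backward-time assertion, which I would obtain by applying \cref{thm:xbar} with $\tf=-T(x)$: then $[0,\tf]=[-T(x),0]$, $\bar x(\tf)=\bar x(-T(x))=x$ again, and $\bar E=E^U_{[-T(x),0]}(x)$. The hard part is that this set is \emph{not} equal to $E^U(x)$ in general: in the product coordinates recalled after \eqref{eq:2} the $\partial/\partial\varphi$ component of $\exp(-\tau\vecfield^0)_*\vecfield^k$ carries a secular factor $-\tau$, so $\tau\mapsto E^U_\tau(x)$ fails to be $T(x)$-periodic. To bypass this I would establish the covariance relation $\exp(s\vecfield^0)'(x)\cdot E^U_\tau(x)=E^U_{\tau-s}(\exp(s\vecfield^0)(x))$, which follows from the composition rule $\exp(s\vecfield^0)_*\circ\exp(-\tau\vecfield^0)_*=\exp(-(\tau-s)\vecfield^0)_*$ applied to each $\vecfield^k$. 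Evaluating at $s=T(x)$ and using $\exp(T(x)\vecfield^0)(x)=x$ yields $E^U_{\tau-T(x)}(x)=M\cdot E^U_\tau(x)$ with $M:=\exp(T(x)\vecfield^0)'(x)$ the invertible monodromy of the orbit, whence, taking the union over a period, $E^U_{[-T(x),0]}(x)=M\cdot E^U(x)$.

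With this identity the two backward cases follow. For Part~I, invertibility of $M$ gives $\cone E^U_{[-T(x),0]}(x)=M\cdot\cone E^U(x)=M\cdot T_x\bigmanif=T_x\bigmanif$, so \eqref{eq:13} holds for $\tf=-T(x)$ and \cref{thm:xbar} delivers the neighborhood property of $\Acc^{\varepsilon U}_{-T(x)}(x)$. For Part~II, differentiating $\pi\circ\exp(T(x)\vecfield^0)=\pi$ at $x$ produces the key cancellation $\pi'(x)\,M=\pi'(x)$; combined with $(\pi\circ\exp(-T(x)\vecfield^0))'(x)=\pi'(x)$, condition \eqref{eq:13bb} for $\tf=-T(x)$ reduces to $\pi'(x)\bigl(\cone E^U_{[-T(x),0]}(x)\bigr)=(\pi'(x)M)(\cone E^U(x))=\pi'(x)(\cone E^U(x))=T_{\pi(x)}\manifM$, i.e.\ exactly the standing hypothesis, and \cref{thm:xbar}-II concludes. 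I expect the covariance/monodromy step to be the main obstacle, the rest being bookkeeping translations of the hypotheses of \cref{thm:xbar}.
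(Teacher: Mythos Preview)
Your forward-time argument is exactly the paper's: the authors state the theorem as ``a direct consequence'' of \cref{thm:xbar} with $\tf=T(x)$ and $\bar x(0)=\bar x(\tf)=x$, which is precisely the bookkeeping you spell out.

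For backward time the paper offers no separate argument; it simply includes the assertion in the statement. Your observation that $E^U_{[-T(x),0]}(x)\neq E^U(x)$ in general (because the monodromy $M=\exp(T(x)\vecfield^0)'(x)$ need not be the identity) is correct and worth making explicit. Your covariance identity $\exp(s\vecfield^0)'(x)\cdot E^U_\tau(x)=E^U_{\tau-s}(\exp(s\vecfield^0)(x))$ is right (it is the chain rule for $\exp(s\vecfield^0)_*\circ\exp(-\tau\vecfield^0)_*=\exp(-(\tau-s)\vecfield^0)_*$), and specializing to $s=T(x)$ gives $E^U_{[-T(x),0]}(x)=M\cdot E^U(x)$. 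From there both backward cases go through as you say: invertibility of $M$ handles Part~I, and the identity $\pi'(x)\,M=\pi'(x)$, obtained by differentiating $\pi\circ\exp(t\vecfield^0)=\pi$ at $t=T(x)$, handles Part~II. So your proof is correct and in fact fills a gap that the paper leaves implicit; the alternative route hinted at elsewhere in the paper (replace each $\vecfield^i$ by $-\vecfield^i$) would still require the same monodromy comparison, so you are not taking an unnecessarily long path.
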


This theorem is more precise (locally) than \cref{thm:CDC-gen} in two ways: here,
points in a neighborhood of the initial orbit may be reached over one period without
leaving some neighborhood of the initial orbit 
while \cref{thm:CDC-gen} gives no estimate on final time, and  allows
trajectories to go arbitrarily far.
Since the final time is prescribed, \cref{rem-2} does not apply and parts I and II have no
reason to be equivalent.
Finally, global controllability can be recovered from \cref{thm:1tour} as seen in the proof below.

\begin{proof}[Alternative proof of \cref{thm:CDC-gen}
  (under the assumption that $U$ is convex)] $\ $\\
  According to \cref{thm:1tour}, part II, $\pi\bigl(\Acc^U_{T(y)}(y)\bigr)$
  is a neighborhood of $\pi(y)$ in $\manifM$ for any $y$ in $\bigmanif$,
  and so is $\pi\bigl(\Acc^U_{-T(y)}(y)\bigr)$.
  For $x$ in $\bigmanif$, let us show that $\Acc^U(x)$ is both open and closed in $\bigmanif$;
  this implies the theorem by connectedness of $\bigmanif$.
  First, consider $y$ in  $\Acc^U(x)$, so that
  $\Acc^U(x)$ contains $\Acc^U_{T(y)}(y) $, and
  $\pi\bigl(\Acc^U(x)\bigr)$ contains $\pi\bigl(\Acc^U_{T(y)}(y)\bigr)$, that was just
  pointed out as a neighborhood of $\pi(y)$, hence
  $\pi\bigl(\Acc^U(x) \bigr)$ is a neighborhood of $\pi(y)$, and,
  using \cref{rem-ext} (\cref{sec-sys}),
  $\Acc^U(x)=\pi^{-1}\bigl(\pi\bigl(\Acc^U(x) \bigr) \bigr)$ is a neighborhood of $y$;
  openness is proven.
  Now suppose that some $y$ is in $\overline{\Acc^U(x)}$,
  and $\pi(y)$ in $\overline{\pi\bigl(\Acc^U(x)\bigr)}$.
  We pointed out that $\pi\bigl(\Acc^U_{-T(y)}(y)\bigr)$ is a neighborhood of $\pi(y)$;
  hence it intersects $\pi\bigl(\Acc^U(x)\bigr)$, \emph{i.e.} there is some $z$ such
  that $z\in \Acc^U_{-T(y)}(y)$ and $z'\in \Acc^U(x)$ such that
  $\pi(z)=\pi(z')$, but, according to \cref{rem-ext} again,
  $z'\in \Acc^U(x)$ then implies $z\in \Acc^U(x)$; $z\in \Acc^U(x)$ and $y\in\Acc^U_{T(y)}(z)$  do imply
  $y\in \Acc^U(x)$; this proves closedness and ends the proof of the theorem.
\end{proof}

\subsection{Obstructions to local controllability}
\label{sec-obstruc}
\Cref{thm:CDC-gen,thm:xbar,thm:1tour} state
sufficient conditions (see \eqref{eq:10}, \eqref{eq:10bis}, \eqref{eq:13}, \eqref{eq:13bb},
and the condition in \cref{thm:1tour})
that require some convex cone to be the whole tangent space.
Let us generically call this cone $C$.
The sufficient conditions are not necessary, see, \emph{e.g.}, \cref{exemple-SR},
hence $C$ being contained in a closed half-space is not an
obstruction to controllability.
This section explains
that one gets an obstruction by requiring that $\overline C$
(topological closure) is contained in a closed half-space
and intersects the separating hyperplane (border of the half space) only at the origin.
   Let us recall that, for a cone $C$ in a vector space $V$, the \emph{polar cone} of $C$ is the cone
   $C^\circ\!=\!\{p\!\in\! V^*\!, \,\langle p,v\rangle\!\leq\!0,\,v\!\in\!C\}$;
   a co-vector $p$ in $V^*$ is in the topological interior of $C^\circ$ if and only
   if $\langle p,v\rangle\!<\!0$ for all $v$ in $\overline{C}\setminus\{0\}$.
   Therefore, the property that a cone has a topological closure that intersects some hyperplane
   only at the origin is equivalent to its polar cone having nonempty
   interior.

\Cref{prop:obstruc-xbar} gives an obstruction to local controllability in the
case of \cref{thm:xbar},  \cref{prop:obstruc-per} an
obstruction to the weaker local \emph{orbital} controllability in the
periodic case.
Note that these obstructions to local controllability are \emph{not} obstructions to
global controllability because controllability could be obtained
by ``going far and coming back''. This is a well know fact, noted in
\cref{rem-loc} and illustrated in \cref{exemple-illustration} (on the
  second System \eqref{eq:50}, see the last paragraph of the example).
For conciseness, we do not give the proof of \cref{prop:obstruc-xbar}, to be
deduced from the other proof, \textit{mutatis mutandis}.

\begin{proposition} 
  \label{prop:obstruc-xbar}
  Consider System \eqref{sys}, and $\tf$, $\bar x(.)$, $\bar E$  as in \eqref{eq:014} and
  \eqref{eq:19}.

  \smallskip

  \noindent
  \textup{I.}  If \hspace{1ex}{\bfseries (i)}\ the interior in
  $T^*_{\bar x(0)}\bigmanif$ of the polar cone to $\cone \bar E$ is nonempty, and \hspace{1ex}\\{\bfseries (ii)}\ the vectors
  $\vecfield^1(\bar x(t)\hspace{-.08em})$, \ldots, $\vecfield^m(\bar x(t)\hspace{-.08em})$
  are linearly independent in $T_{\bar x(t)}\bigmanif$ for all $t$ in $[0,\tf]$,
  \hspace{1ex}\ then
  System~\eqref{sys} is not locally controllable around the solution $\bar x(.)$.

  \smallskip

  \noindent
  \textup{II.}
  Let $\pi:\bigmanif\to\manifM$ be as in \cref{thm:xbar}, Part \textup{II}.
   If \hspace{1ex}{\bfseries (i)}\ the interior in $T^*_{\pi(\bar x(\tf))}\manifM$ of the polar cone to
  $\left( \pi\!\circ\!\exp(\tf\vecfield^0)\right)'\!\!(\bar x(0))\!\left(\cone \bar
    E\right)$
  is nonempty, and
  \hspace{1ex}{\bfseries (ii)}\ the vectors $\pi'(\bar x(t))\,\vecfield^1(\bar x(t))$, \ldots, $\pi'(\bar x(t))\,\vecfield^m(\bar x(t))$
  are linearly independent in $T_{\pi(\bar x(t)\hspace{-.08em})}\manifM$,
  \hspace{1ex} then
  System~\eqref{sys} is not locally controllable with respect to $\pi$ around $\bar x(.)$.
\end{proposition}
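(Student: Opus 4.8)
The plan is to prove Part II (Part I being the special case $\pi=\mathrm{Id}$, $\manifM=\bigmanif$, exactly as in \cref{thm:xbar}). First I would use the recollection preceding the statement: since the polar cone of $C:=\left(\pi\circ\exp(\tf\vecfield^0)\right)'\!(\bar x(0))\bigl(\cone\bar E\bigr)$ has nonempty interior, there is a covector $p^\ast\in T^\ast_{\pi(\bar x(\tf))}\manifM$ with $\langle p^\ast,v\rangle<0$ for every $v\in\overline C\setminus\{0\}$. The goal is then to promote $p^\ast$ into a genuine barrier confining the projected reachable set to the closed half-space $\{\langle p^\ast,\cdot\rangle\le0\}$, which prevents $\pi\bigl(\BAcc_{\tf}^{U,\pi^{-1}(\Omega')}(\bar x(0))\bigr)$ from being a neighborhood of $\pi(\bar x(\tf))$ for small $\Omega'$.

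To eliminate the drift and to recognize $C$, I would change variables by $y(t)=\exp(-t\vecfield^0)(x(t))$: a computation using $\exp(-t\vecfield^0)_*\vecfield^0=\vecfield^0$ turns \eqref{sys} into the driftless, time-varying system $\dot y=\sum_{k=1}^m u_k\,Z^k(t,y)$ with $Z^k(t,\cdot)=\exp(-t\vecfield^0)_*\vecfield^k$, whose reference solution is the equilibrium $y\equiv\bar x(0)$, while $\pi(x(\tf))=\Pi(y(\tf))$ with $\Pi:=\pi\circ\exp(\tf\vecfield^0)$. By the same identity used in \cref{lem:lin}, the generators of $C$ are precisely the vectors $\Pi'(\bar x(0))\,Z^k(t,\bar x(0))$, $t\in[0,\tf]$; in the orbit case $\pi\circ\exp(s\vecfield^0)=\pi$ these coincide with $\pi'(\bar x(t))\vecfield^k(\bar x(t))$, so that Assumption (ii) is exactly the linear independence, grouped time by time, of the generators of $C$; for a general $\pi$ it provides the analogous transversality of the instantaneous control directions.

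Next I would introduce the candidate barrier $\Lambda:=\langle p^\ast,\ell\circ\Pi\rangle$, where $\ell$ is a chart of $\manifM$ centred at $\pi(\bar x(\tf))$ with $d\ell=\mathrm{id}$ there, so that $\Lambda(\bar x(0))=0$ and $d\Lambda(\bar x(0))=p^\ast\circ\Pi'(\bar x(0))$. Along any admissible $y(\cdot)$ one has $\dot\Lambda=\sum_k u_k\,\langle d\Lambda(y),Z^k(t,y)\rangle$; evaluated at the equilibrium this reads $\langle p^\ast,\Pi'(\bar x(0))\sum_k u_k Z^k(t,\bar x(0))\rangle$, which is $\le0$ for all $u\in U$ and $t\in[0,\tf]$ because the argument lies in $C$, with equality only on the face of $U$ annihilating $c(t):=\bigl(\langle p^\ast,\Pi'(\bar x(0))Z^k(t,\bar x(0))\rangle\bigr)_k$; by (ii) this is equivalent to vanishing of the projected velocity. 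Compactness of $U$ and of $[0,\tf]$, together with strict negativity of $p^\ast$ on $\overline C\setminus\{0\}$, make this infinitesimal inequality uniform away from the saturating face.

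The hard part, and the step where (ii) does the real work, is to upgrade this infinitesimal inequality into an exact barrier on a whole tube $\pi^{-1}(\Omega')$: along a nearby trajectory the corrections $\langle d\Lambda(y),Z^k(t,y)\rangle-\langle d\Lambda(\bar x(0)),Z^k(t,\bar x(0))\rangle$ are only $O(|y-\bar x(0)|)$, and on the saturating face $\{u:\langle c(t),u\rangle=0\}$ they could a priori push $\Lambda$ upward. I would control them by a quantitative, Gronwall-type estimate in the shrinking tube: the non-degeneracy (ii) forces the saturating controls to move $\Pi(y)$ transversally to $\{\langle p^\ast,\cdot\rangle=0\}$ only at second order, whereas the first-order loss in the $-p^\ast$ direction, available from the strictness in (i), dominates; convexity, compactness of $U$ and $0\in U$ then yield $\Lambda(y(\tf))\le0$ for every trajectory staying in $\pi^{-1}(\Omega')$, whence the claim. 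I expect this last estimate to be the only delicate point; the reduction and the extraction of $p^\ast$ are routine, and Part I follows verbatim with $\pi=\mathrm{Id}$.
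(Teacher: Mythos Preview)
Your strategy is the right one and coincides with the paper's: pull back via $y=\exp(-t\vecfield^0)(x)$ so that the reference becomes an equilibrium, pick $p^\ast$ in the interior of the polar cone, and use the linear barrier $\Lambda$. The gap is in what you call the ``hard part'', which in fact is not hard once you exploit positive homogeneity in $u$.

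You have already established that, at the equilibrium, $\dot\Lambda\le0$ with equality only when the projected velocity vanishes, and that by (ii) this forces $u=0$. But you then treat the set $\{u\in U:\langle c(t),u\rangle=0\}$ as a nontrivial ``saturating face'' and propose a Gronwall-type, second-order-versus-first-order estimate to control perturbations on it. This is a phantom difficulty: you just proved that the only $u\in U$ giving equality is $u=0$, so there is no face to worry about. The vague estimate you sketch is neither needed nor, as written, a proof.

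The paper's argument (given for \cref{prop:obstruc-per} and meant to be transcribed here) replaces your step~5 by a two-line compactness-plus-homogeneity trick. Set $K=\{u\in\cone U:\|u\|=1\}$. For each $(t,u)\in[0,\tf]\times\overline K$, assumption (ii) makes $\sum_k u_k Z^k(t,\bar x(0))$ nonzero; it lies in $\overline{\cone\bar E}\setminus\{0\}$, so $\langle p^\ast,\,\cdot\,\rangle$ is strictly negative on it, and by compactness of $[0,\tf]\times\overline K$ this function has a maximum $\alpha<0$. Continuity in $y$ then gives a neighborhood $O''$ of $\bar x(0)$ on which the bound $\alpha/2<0$ persists uniformly in $(t,u)\in[0,\tf]\times K$. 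Since every $u\in U\subset\cone U$ is either $0$ or $\|u\|$ times a point of $K$, positive homogeneity yields $\dot\Lambda\le0$ for all $(y,t,u)\in O''\times[0,\tf]\times U$. No Gronwall, no second-order balancing, and convexity of $U$ is not used.
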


\begin{proposition} 
  \label{prop:obstruc-per}
  Consider System~\eqref{sys} under Assumption \eqref{per} on $\vecfield^0$ and let
  $\bar x(.)$ be a particular periodic solution of $\dot x=\vecfield^0(x)$ with $\bar
  I=\pi(\bar x(t))$, $t\in\RR$.
  If
  \hspace{1ex}\\{\bfseries (i)}\ the polar cone to $\cone\Bigl(\pi'(\bar x(0))E^U\!(\bar x(0))\Bigr)$
  has a nonempty interior  in $T_{\bar I}\manifM$, and \hspace{1ex}\\{\bfseries (ii)}\ the vectors
  $\pi'(\bar x(t))\vecfield^1(\bar x(t))$, \ldots, $\pi'(\bar x(t))\vecfield^m(\bar x(t))$
  are linearly independent in $T_{\bar I}\manifM$
  for all $t$ in $[0,T(x)]$,
  \hspace{1ex}\\ then System~\eqref{sys} is not locally orbitally controllable around the
  periodic solution $\bar x(.)$. 
\end{proposition}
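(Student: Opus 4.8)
The plan is to exploit the product form \eqref{sys-Iphi}, in which the drift acts only on $\varphi$ and the $I$-dynamics reads $\dot I=\sum_k u_k F^k(I,\varphi)$, in order to manufacture a linear functional on the base $\manifM$ that cannot increase along admissible trajectories. Working in a coordinate chart around $\bar I$ and using the reformulation \eqref{eq:401} of local orbital controllability, it suffices to produce a neighbourhood $\Omega'$ of $\bar I$ and a covector $p\in T^*_{\bar I}\manifM$ such that every trajectory issued from $\bar x(0)$ and remaining in $\pi^{-1}(\Omega')$ keeps $\langle p,I(t)-\bar I\rangle\le 0$. The half-space $\{\langle p,\cdot-\bar I\rangle\le 0\}$ being no neighbourhood of $\bar I$, this denies local orbital controllability.

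First I would extract $p$ from hypothesis (i). Since $\pi'(\bar x(0))E^U(\bar x(0))=\breve E^U(\bar I)$ by \eqref{eq:26a}, hypothesis (i) says the polar cone of $\cone\breve E^U(\bar I)$ has nonempty interior; picking $p$ in that interior yields, by the characterisation recalled before \cref{prop:obstruc-xbar} together with a compactness argument on the unit sphere intersected with $\overline{\cone\breve E^U(\bar I)}$, a constant $c>0$ with $\langle p,v\rangle\le -c\,|v|$ for every $v\in\cone\breve E^U(\bar I)$. Applying this to $v=\sum_k u_k F^k(\bar I,\varphi)\in\breve E^U(\bar I)$ gives $\langle p,\sum_k u_k F^k(\bar I,\varphi)\rangle\le -c\,\bigl|\sum_k u_k F^k(\bar I,\varphi)\bigr|$ for all $u\in U$ and $\varphi\in\Sun$. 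Here hypothesis (ii) enters: as $\bar x(t)$ sweeps the whole fibre $\Sun$ over one period, (ii) means that $F^1(\bar I,\varphi),\ldots,F^m(\bar I,\varphi)$ are independent for every $\varphi$, so by compactness of $\Sun$ there is $c_2>0$ with $\bigl|\sum_k u_k F^k(\bar I,\varphi)\bigr|\ge c_2\,|u|$. Combining the two gives the key estimate $\langle p,\sum_k u_k F^k(\bar I,\varphi)\rangle\le -c'\,|u|$, with $c'=c\,c_2>0$, uniformly in $\varphi$.

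Next I would close the argument by a differential inequality. Set $V(I)=\langle p,I-\bar I\rangle$ in the chart and compute, along any admissible trajectory, $\dot V=\langle p,\sum_k u_k F^k(I,\varphi)\rangle$ almost everywhere. Splitting $F^k(I,\varphi)=F^k(\bar I,\varphi)+\bigl(F^k(I,\varphi)-F^k(\bar I,\varphi)\bigr)$, the first group contributes at most $-c'|u|$ by the key estimate, while smoothness of the $F^k$ bounds the remainder by $C\,|u|\,|I-\bar I|$ uniformly for $\varphi\in\Sun$ and $I$ near $\bar I$. Choosing $\Omega'=\{\,|I-\bar I|<\delta\,\}$ with $\delta=c'/(2C)$ yields $\dot V\le -(c'/2)\,|u|\le 0$, hence $V(I(t))\le V(\bar I)=0$ as long as the (absolutely continuous) trajectory stays in $\pi^{-1}(\Omega')$. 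This confines $\bigcup_{t\ge 0}\pi\bigl(\BAcc^{U,\pi^{-1}(\Omega')}_t(\bar x(0))\bigr)$ to $\{I\in\Omega':\langle p,I-\bar I\rangle\le 0\}$, which is not a neighbourhood of $\bar I$, as desired.

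The main obstacle is the uniform estimate $\langle p,\sum_k u_k F^k(\bar I,\varphi)\rangle\le -c'|u|$: one must upgrade the merely pointwise strict negativity granted by $p$ lying in the interior of the polar cone to a bound \emph{linear} in $|u|$, and it is precisely the linear independence (ii) --- promoted to a uniform coercivity $\bigl|\sum_k u_k F^k(\bar I,\varphi)\bigr|\ge c_2|u|$ over the compact fibre $\Sun$ --- that prevents the admissible $I$-velocities from becoming tangent to the separating hyperplane and makes the drift-free $I$-equation behave monotonically. Note that no compactness of $U$ is needed, the estimate being conic in $u$; this mirrors the omitted proof of \cref{prop:obstruc-xbar}, recovered \emph{mutatis mutandis} by transporting $p$ with $\left(\pi\circ\exp(\tf\vecfield^0)\right)'$ instead of invoking periodicity of the drift.
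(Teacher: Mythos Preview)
Your proof is correct and follows essentially the same route as the paper's: pass to the product form, pick $p$ in the interior of the polar cone to $\cone\breve E^U(\bar I)$, use (ii) to promote pointwise strict negativity to a uniform bound over the compact fibre $\Sun$, perturb to a neighbourhood of $\bar I$, and conclude that the linear functional $I\mapsto\langle p,I-\bar I\rangle$ is nonincreasing along admissible trajectories staying in that neighbourhood. The only cosmetic difference is that the paper normalises $u$ to the unit sphere of $\cone U$ and invokes continuity on the compact $\Sun\times K$ directly, whereas you factor the estimate as $\langle p,v\rangle\le -c|v|$ times the coercivity $|\sum_k u_k F^k(\bar I,\varphi)|\ge c_2|u|$ and then perturb via a Lipschitz splitting; both yield the same differential inequality $\dot V\le 0$.
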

  In point (i) (\cref{prop:obstruc-per}), $\bar x(0)$ may be replaced by $\bar x(t)$,
  for any other $t$, because
  $\pi'(\bar x(t))E^U\!(\bar x(t))$ does not depend on $t$, see \eqref{eq:2}--\eqref{eq:26a};
this is consistant with the definition of ``locally orbitally controllability'' that
does not take time or initial condition on the orbit into account.
\begin{remark}[Re-formulation of \cref{prop:obstruc-per} in the product form \eqref{sys-Iphi}]
  \label{rmk:reformulation}  
  Since Condition \eqref{per} is assumed and the property is local around a periodic
orbit $\bar I$ (with $\pi(\bar x(t))=\bar I$, $t\in\RR$), one may re-formulate
\cref{prop:obstruc-per} as follows, in the product form \eqref{sys-Iphi},
$(I,\varphi)\in O\times\Sun$ (one may not take $O=\manifM$ in general).
\\
\textit{ 
  If, for some $\bar I$ in $O$,
  \hspace{1ex}\\{\bfseries (i)}\ the polar cone to $\cone\breve{E}^U\!(\bar I\,)$
  has a nonempty interior, and 
  \\{\bfseries (ii)}\ 
  $F^1(\bar I,\varphi),\ldots,F^m(\bar I,\varphi)$
  are linearly independent in $T_{\bar I}\manifM$
  for all $\varphi$ in $\Sun$,
  \hspace{1ex}\\ then System~\eqref{sys-Iphi} is not locally orbitally controllable around the
  periodic solution $\bar I$.
}
\\
The equivalence between the two formulations is easily deduced from \eqref{Xi-Fi}, with
$x=\Phi(I,\varphi)$.
 For local orbital controllability in this setting, see \eqref{eq:401} after
 \cref{def:controllabilty-loc}-III.
\end{remark}
 
\begin{proof}[Proof of \cref{prop:obstruc-per}]
   We prove the re-formulation.
  Let $K=\{u\!\in\!\cone U,\, u_1^{\,2}+$ $\cdots+u_m^{\,2}=1\}$. 
  Fix $\bar p$ in the interior of the polar cone to $\cone\breve{E}^U\!(\bar I\,)$.
  Let $\alpha\!<\!0$ be the maximum of the smooth function
  $(\varphi,u)\!\mapsto\!\langle \bar p,\sum_{k=1}^m u_k F^k(\bar
  I,\varphi)\rangle$ on the compact $\Sun\times K$, where it remains
  negative from points \textit{(i)} and \textit{(ii)}.
  Using some coordinates on an open set $O'\subset O$ around
  $\bar I$, one may define the smooth function $h:O'\to\RR$ by
  $h(I)=\langle \bar p,I\rangle$, and also give a meaning to $\langle \bar p,v\rangle$ if $v\in T_IO$,
  $I\in O'$.
  Then the above implies that there is a neighborhood $O''$
  of $\bar I$ in $O'$ such that
  $\langle \bar p,\sum_{k=1}^m u_k F^k(I,\varphi)\rangle\!<\!\alpha/2\!<\!0$
  for all $(I,\varphi,u)$ in $O''\times\Sun\times K$, hence
  $\langle \bar p,\sum_{k=1}^m u_k F^k(I,\varphi)\rangle\!\leq\!0$
  for all $(I,\varphi,u)$ in $O''\times\Sun\times U$.
  This means that, if $t\mapsto (I(t),\varphi(t),u(t))$ is a
  solution of \eqref{sys-Iphi} defined on $[0,\tf]$ such that $I(0)=\bar I$
  and $I(t)\in O''$ for all $t$ in $[0,\tf]$, then 
  $\frac{\mathrm{d}}{\mathrm{d}t}h(I(t))\leq 0$ for all $t$, hence
  $h(I(\tf))\leq h(\bar I)$.
  This defeats local orbital controllability, taking
  $\Omega=O''$ in \eqref{eq:401}, 
  because $I(\tf)$ is constrained to belong to
  $\{J\in O'', \,h(J)\leq h(\bar{I})\}$, which is not a neighborhood
  of $\bar I$ because $h$ is nondegenerate at $\bar I$, where its derivative is $\bar p\neq0$.
\end{proof}

\newcommand{\thickthicklines}{\linethickness{1.3pt}}
\begin{remark}
  \label{rmk:cond-suppl-neg}
  \setlength{\unitlength}{.0031\textwidth}
  \begin{figure}[h]
    \label{fig:exemple}
    \centering
    \begin{picture}(70,70) 
      \put(-5,20){\vector(1,0){75}}  \put(64,14){$v_1$}
      \put(30,7){\vector(0,1){60.5}}  \put(21,67){$v_2$}
  %
      \thickthicklines
      \put(30,20){\line(1,1){30}}
      \put(30,20){\line(-1,1){30}}
      \thinlines\textcolor{gray}{
        \put(-2,50){\line(1,0){35}}  \put(37,50){\line(1,0){25}}
        \put(-2,57){\line(1,0){64}}}
  %
      \thickthicklines
      \put(0,50){\line(0,1){7}}
      \put(60,50){\line(0,1){7}}
      \put(0,57){\line(1,0){60}}
  %
      \put(33,49){$1$}
      \put(32,60){$1\!+\!I_1$}
      \thinlines\textcolor{gray}{
        \put(0,17){\line(0,1){6}}
        \put(60,17){\line(0,1){6}}
      }
      \put(-3,24){$-1$}
      \put(59,24){$1$}
  %
      \put(20,-2){\makebox(20,5){$I_1>0$}}
    \end{picture}
    \quad
    \begin{picture}(70,70) 
      \put(-5,20){\vector(1,0){75}}  \put(64,14){$v_1$}
      \put(30,7){\vector(0,1){60.5}}  \put(21,67){$v_2$}
  %
      \thickthicklines
      \put(30,20){\line(1,1){30}}
      \put(30,20){\line(-1,1){30}}
      \put(0,50){\line(1,0){60}}
  %
      \put(32,52){$1$}
      \thinlines\textcolor{gray}{
        \put(0,17){\line(0,1){6}}
        \put(60,17){\line(0,1){6}}
      }
      \put(-3,24){$-1$}
      \put(59,24){$1$}
  %
      \put(20,-2){\makebox(20,5){$I_1=0$}}
    \end{picture}
    \quad
    \begin{picture}(70,70) 
      \put(-5,20){\vector(1,0){75}}  \put(64,14){$v_1$}
      \put(30,7){\vector(0,1){60.5}}  \put(21,67){$v_2$}
  %
      \thickthicklines
      \thinlines\textcolor{gray}{
        \put(-2,50){\line(1,0){64}}
        \put(-2,40){\line(1,0){34}} \put(51,40){\line(1,0){11}}
      }
      \thickthicklines
      \put(0,50){\line(1,0){60}}
  %
      \thickthicklines
      \put(0,50){\line(0,-1){10}}
      \put(60,50){\line(0,-1){10}}
      \put(30,10){\line(1,1){30}}
      \put(30,10){\line(-1,1){30}}
  %
      \put(32,52){$1$}
      \put(32,39){$1\!+\!I_1$}
      \thinlines\textcolor{gray}{
        \put(27,10){\line(1,0){6}}
        \put(0,17){\line(0,1){6}}
        \put(60,17){\line(0,1){6}}
      }
      \put(34,8){$I_1$}
      \put(-4.5,24){$-1$}
      \put(58.4,24){$1$}
  %
      \put(20,-2){\makebox(20,5){$I_1<0$}}
    \end{picture}
    \caption{
        For the example in \cref{rmk:cond-suppl-neg}, the set
        $\breve{E}^U\!(I_1,I_2)\subset T_{(I_1,I_2)}\manifM$ defined
        in \eqref{eq:26a} reads
        $\{(u_1\sin\varphi, u_1+u_2\,I_1),\,(u_1,u_2)\in[0,1]^2,\,\varphi\in\Sun\}$
        in the coordinates $v_1,v_2$ associated to $I_1,I_2$.  This
        yields the above representation of $\breve{E}^U\!(I_1,I_2)$,
        depending on the sign of $I_1$.
    }
  \end{figure}
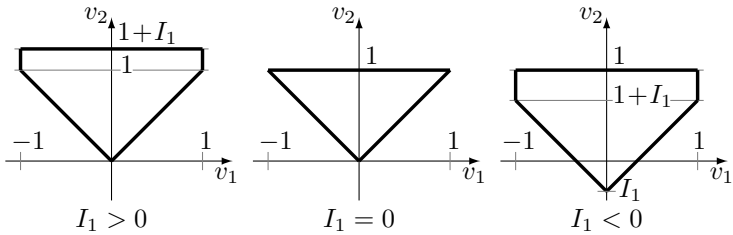
  In \cref{prop:obstruc-per,prop:obstruc-xbar}, it may seem
    odd that the control
  vector fields have to be linearly independent, but it 
  is important in the proof above: if not, at the third line of the proof,
  $\langle\bar p,\sum_{k=1}^m u_k F^k(\bar I,\varphi)\rangle$ could very well vanish at some points in $\Sun\times K$,
  and one could even not prove that the interior of the polar
    cone to $\cone\bigl(\breve{E}^U\!(I\,)\bigr)$ is nonempty for $I$ close
    enough to $\bar I$.
    Let us show that, indeed, Assumption
    \textit{(ii)} cannot be removed from
    \cref{prop:obstruc-per} or \ref{prop:obstruc-xbar}.
  As a counterexample, consider the system of the form \eqref{sys-Iphi} with $m\!=\!n\!=\!2$, $\manifM\!=\!\RR^2$,
  $U\!=\![0,1]\!\times\![0,1]$,
  \begin{displaymath}
    F^1=\partial/\partial{I_2}+\sin\varphi\,\partial/\partial{I_1}
    \quad\text{and}\quad
    F^2=I_1\,\partial/\partial{I_2}\,.
  \end{displaymath}
    Comparing \cref{fig:exemple}, the polar cone at the origin
      to $\breve{E}^U\!(I_1,I_2)$ is $\{0\}$ if
  $I_1<0$ and contains, for instance, $(0,-1)$ in its interior if $I_1\geq0$.
Hence, at  $\bar I=(0,0)$,
\\- Assumption \textit{(i)} of \Cref{prop:obstruc-per} is
  met, but \textit{(ii)} is not, because $F_2(0,0)=0$,
\\- local controllability 
  holds (to reach a neighborhood, first go to close-by
  points where $I_1<0$), so that the conclusion of \Cref{prop:obstruc-per} does not hold,
\\-  the topological interior of the polar cone at the origin to
  $\breve{E}^U\!(I)$ is non-empty at $I=(0,0)$ but empty at some
  arbitrarily close points $(I_1,I_2)$ with $I_1<0$.
%
\end{remark}

\section{Comments, examples and counter-examples}
\label{sec-comments+examples}

\paragraph{On constructively checking the conditions of the theorems}
  Conditions \eqref{eq:10}, \eqref{eq:10bis}, \eqref{eq:13}, \eqref{eq:13bb}, or
  the sufficient condition in \cref{thm:1tour}, involve both the vector fields and
  the control set $U$ and are much more difficult to check than the rank of a
  family of vector fields. 
  This point is discussed by the authors in \cite{JGCD}
  (or in the earlier reference \cite{ecc-2022}),
  where the condition is
  formulated in terms of convex optimization and conveniently solved, in the case of solar
  sails, using sum of square positivity criteria on trigonometric polynomials (this last
  point cannot be generalized to arbitrary periodic systems and sets $U$),
  yielding minimum
  requirements on the characteristics of a solar sail to provide controllability around a
  particular orbit.

\paragraph{Implications of our conditions in terms of Lie brackets}

  The sufficient conditions given in \cref{sec-jurdj-results,sec-xbar,sec-conseq} are based
  on the \emph{conic hull} of the subsets of $T_x\bigmanif$, for some or all $x$ in $\bigmanif$,
  defined in \eqref{eq:1},
  \eqref{eq:EUt1t2} and \eqref{eq:2} being the whole tangent space or not.
  Classical controllability results are not based on cones, but rather on some
  \emph{vector subspaces} of the tangent space,
  generally spanned by collections of Lie Brackets, see \cref{section:state-of-the-art}.
  Obviously, two different subsets of the tangent space may span the same vector subspace
  while generating different convex cones, so that the information contained in the convex
  hull is finer.
  For instance, $\cone E^U_{[t_1,t_2]}(x)$ (or $\cone E^U_\tau$, or $\cone E^U$) a priori depends on $U$ (see
  \cref{exemple-illustration} below), whereas $\vect E^U_{[t_1,t_2]}(x)$ does not
  because, under Assumption~\eqref{eq:assU}, one has  $\vect E^U_{[t_1,t_2]}(x)=\vect E^{\RR^m}_{[t_1,t_2]}(x)$.
  It is however interesting to characterize $\vect E^U_{[t_1,t_2]}(x)$,
both for the sake of
comparison with known results, and because a \emph{necessary} condition for
$\cone E^U_{[t_1,t_2]}(x)=T_x\bigmanif$ is $\vect E^U_{[t_1,t_2]}(x)=T_x\bigmanif$; we will use
this fact in examples to show that our sufficient conditions are
\emph{not} met.

\begin{proposition}\label{prop-crochets}
  With $\mathcal{F}_0$ defined in \eqref{eq:3},
  one has, for any $x$ and any $t_1<t_2$,
  $\vect E^U_{[t_1,t_2]}(x) \supset \vect\mathcal{F}_0(x)$,
  and, under the additionnal assumption that the vector fields
  $\vecfield^0,\ldots,\vecfield^m$ are real analytic,
  $\vect
  E^U_{[t_1,t_2]}(x) = \vect\mathcal{F}_0(x)$.
\end{proposition}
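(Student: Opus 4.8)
The plan is to reduce everything to the finite-dimensional vector-valued curves $\tau\mapsto g_k(\tau):=\bigl(\exp(-\tau\vecfield^0)_*\vecfield^k\bigr)(x)\in T_x\bigmanif$, $k\in\{1,\dots,m\}$, and to exploit that their Taylor coefficients at $\tau=0$ are exactly the iterated brackets generating $\mathcal{F}_0$. First I would observe that, for fixed $\tau$, the map $u\mapsto\sum_k u_k\,g_k(\tau)$ is linear, so by Assumption \eqref{eq:assU} ($\vect U=\RR^m$) one has $\vect E^U_\tau(x)=\vect\{g_1(\tau),\dots,g_m(\tau)\}$; taking the union over $\tau\in[t_1,t_2]$ gives the working identity $\vect E^U_{[t_1,t_2]}(x)=\vect\{g_k(\tau):\tau\in[t_1,t_2],\,k=1,\dots,m\}$. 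Next I would record the standard pushforward--bracket formula $\frac{\mathrm d}{\mathrm d\tau}\bigl(\exp(-\tau\vecfield^0)_*Y\bigr)=\exp(-\tau\vecfield^0)_*[\vecfield^0,Y]$, valid for any smooth $Y$; iterating and evaluating at $\tau=0$ yields $g_k^{(j)}(0)=(\mathrm{ad}_{\vecfield^0}^j\vecfield^k)(x)$, i.e. the derivatives at the origin are precisely the elements of $\mathcal{F}_0(x)$.

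The inclusion $\vect E^U_{[t_1,t_2]}(x)\supset\vect\mathcal{F}_0(x)$ then follows from a soft closed-subspace argument. Set $S:=\vect E^U_{[t_1,t_2]}(x)$, a finite-dimensional, hence closed, linear subspace of $T_x\bigmanif$. By the working identity each curve $g_k$ takes its values in $S$ on $[t_1,t_2]$; since $S$ is a linear subspace, every difference quotient of $g_k$ lies in $S$, and so do all its derivatives at points of the interval. Evaluating the $j$-th derivative at $\tau=0$ (which belongs to the interval in the situations considered, where $t_1=0$) gives $(\mathrm{ad}_{\vecfield^0}^j\vecfield^k)(x)=g_k^{(j)}(0)\in S$ for every $j\in\NN$ and every $k$, whence $\vect\mathcal{F}_0(x)\subset S$.

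For the converse inclusion under real-analyticity I would argue with the identity theorem rather than with a (merely local) power-series expansion. Fix any covector $p\in T_x^*\bigmanif$ annihilating $\vect\mathcal{F}_0(x)$; because $\vecfield^0,\dots,\vecfield^m$ are real-analytic, the flow and its differential are jointly analytic, so each scalar function $\tau\mapsto\langle p,g_k(\tau)\rangle$ is real-analytic on $\RR$. All its derivatives at $\tau=0$ vanish, since they equal $\langle p,(\mathrm{ad}_{\vecfield^0}^j\vecfield^k)(x)\rangle=0$ by the previous step; a real-analytic function on the connected set $\RR$ that vanishes to infinite order at a point is identically zero, so $\langle p,g_k(\tau)\rangle=0$ for all $\tau$. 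As $p$ ranges over all annihilators of $\vect\mathcal{F}_0(x)$ this shows $g_k(\tau)\in\vect\mathcal{F}_0(x)$ for every $\tau$ and every $k$, i.e. $S\subset\vect\mathcal{F}_0(x)$, giving equality.

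I expect the genuine obstacle to be exactly this last step: passing from \emph{all Taylor coefficients at $\tau=0$ lie in the subspace} to \emph{the whole curve lies in the subspace}. This is false in the merely smooth category---a flat-but-nonzero bump in $\vecfield^k$ produces a curve $g_k$ leaving $\vect\mathcal{F}_0(x)$ while all its brackets at $x$ vanish---which is precisely why only the inclusion, and not the equality, survives without analyticity; the identity theorem is what closes the gap in the analytic case. A secondary point to handle with care is the sign and bookkeeping in the pushforward--bracket formula of the first paragraph, which underlies the identification of the Taylor coefficients with the generators of $\mathcal{F}_0$.
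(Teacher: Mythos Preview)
Your proof is correct and follows essentially the same line as the paper's: the paper also uses the identity $\frac{\mathrm{d}^j}{\mathrm{d}\tau^j}\bigl(\exp(-\tau\vecfield^0)_*\vecfield^k\bigr)(x)\big|_{\tau=0}=(\mathrm{ad}_{\vecfield^0}^j\vecfield^k)(x)$, proves the inclusion by differentiating (phrased dually via an annihilating covector $p$ rather than your direct closed-subspace argument, but these are equivalent), and obtains the reverse inclusion in the analytic case by the identity theorem applied to the scalar functions $\tau\mapsto\langle p,g_k(\tau)\rangle$. The only cosmetic difference is that the paper packages the argument into an auxiliary lemma about open intervals $(\alpha_1,\alpha_2)$ and then sandwiches $[t_1,t_2]$ between $(t_1+\varepsilon,t_2-\varepsilon)$ and $(t_1-\varepsilon,t_2+\varepsilon)$, whereas you work with the closed interval directly; your explicit caveat that $0$ must lie in the interval (as it does in all uses, where $t_1=0$) is well taken and is implicitly needed in the paper's argument too.
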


\noindent
Let us give a short proof, although it is classical:
introducing properties of families of \emph{analytic} vector fields in control dates back
to the work of Hermann and Krener in the early 1960s, see
\cite[Chapter 2, Theorem 6]{jurdjevic_geometric_1996}.
Assuming that $\vecfield^0$ is complete, set
\begin{align}
  \label{eq:4}
  \mathcal{F}_{(\alpha_1,\alpha_2)}&=\{\exp(-t\vecfield^0)_*\vecfield^k,\,
                 k\in\{1,\ldots,m\},\;t\in\RR,\,\alpha_1<t<\alpha_2\},\ 
                      \alpha_1<\alpha_2, \\
  \label{eq:6}
  \mathcal{F}_\infty&=\{\exp(-t\vecfield^0)_*\vecfield^k,\,
                 k\in\{1,\ldots,m\},\,t\in\RR\}\,.
\end{align}

\begin{lemma}
  \label{lem-ana}
  One has
  $\vect\mathcal{F}_0(x)
  \subset\vect\mathcal{F}_{(\alpha_1,\alpha_2)}(x)\subset\vect\mathcal{F}_\infty(x)$,
  for any $\alpha_1<\alpha_2$, and even,
  under the additionnal assumption that the vector fields
  $\vecfield^0,\ldots,\vecfield^m$ are real analytic, 
  $\vect\mathcal{F}_0(x) =\vect\mathcal{F}_{(\alpha_1,\alpha_2)}(x) =\vect\mathcal{F}_\infty(x)$.
\end{lemma}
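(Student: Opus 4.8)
The plan is to reduce everything to the single smooth curve $c_k\colon t\mapsto\bigl(\exp(-t\vecfield^0)_*\vecfield^k\bigr)(x)\in T_x\bigmanif$, $k\in\{1,\dots,m\}$, defined on all of $\RR$ since $\vecfield^0$ is complete, and to exploit the soft fact that a smooth curve with values in a finite-dimensional (hence closed) subspace has all its derivatives in that subspace. First I would record the flow-derivative identity
\[
  \frac{\mathrm d}{\mathrm dt}\bigl(\exp(-t\vecfield^0)_*\vecfield^k\bigr)
  =\exp(-t\vecfield^0)_*\,[\vecfield^0,\vecfield^k]
  =\exp(-t\vecfield^0)_*\,\mathrm{ad}_{\vecfield^0}\vecfield^k,
\]
which follows from the one-parameter group property $(\exp(-(t+s)\vecfield^0))_*=(\exp(-t\vecfield^0))_*(\exp(-s\vecfield^0))_*$ together with $\frac{\mathrm d}{\mathrm ds}\big|_{0}(\exp(-s\vecfield^0))_*\vecfield^k=[\vecfield^0,\vecfield^k]$. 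Iterating yields $c_k^{(j)}(t)=\bigl(\exp(-t\vecfield^0)_*\mathrm{ad}_{\vecfield^0}^{j}\vecfield^k\bigr)(x)$, and in particular $c_k^{(j)}(0)=(\mathrm{ad}_{\vecfield^0}^{j}\vecfield^k)(x)$, which is exactly the family of vectors spanning $\vect\mathcal{F}_0(x)$.

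The inclusions in the smooth case then come almost for free. The middle inclusion $\vect\mathcal{F}_{(\alpha_1,\alpha_2)}(x)\subset\vect\mathcal{F}_\infty(x)$ is immediate, since $\mathcal{F}_{(\alpha_1,\alpha_2)}\subset\mathcal{F}_\infty$ as sets of vector fields. For $\vect\mathcal{F}_0(x)\subset\vect\mathcal{F}_\infty(x)$ I note that every value $c_k(t)$, $t\in\RR$, is by definition a generator of the closed subspace $\vect\mathcal{F}_\infty(x)$, so all derivatives $c_k^{(j)}(0)=(\mathrm{ad}_{\vecfield^0}^{j}\vecfield^k)(x)$ lie in it. The one point needing care is the analogous step for $\mathcal{F}_{(\alpha_1,\alpha_2)}$: it uses the value and the one-sided derivatives of $c_k$ at $t=0$, hence needs $0$ to lie in the closure $[\alpha_1,\alpha_2]$ of the interval, which is exactly the situation in every application, where the interval is $[0,\tf]$ or $[0,T(x)]$ (see \eqref{eq:19}, \eqref{eq:2}). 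I would carry out the closed-subspace argument under that proviso; for an interior $t_0\neq0$ the base-point shift in $c_k^{(j)}(t_0)=\bigl(\exp(-t_0\vecfield^0)_*\mathrm{ad}_{\vecfield^0}^{j}\vecfield^k\bigr)(x)$ does not, by itself, let one recover $(\mathrm{ad}_{\vecfield^0}^{j}\vecfield^k)(x)$.

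For the equalities under real-analyticity, the extra ingredient is the reverse inclusion $\vect\mathcal{F}_\infty(x)\subset\vect\mathcal{F}_0(x)$. Here I would use that $c_k$ is real-analytic and coincides near $t=0$ with its convergent Taylor series $c_k(t)=\sum_{j\ge0}\frac{t^j}{j!}(\mathrm{ad}_{\vecfield^0}^{j}\vecfield^k)(x)$; each partial sum lies in the closed subspace $\vect\mathcal{F}_0(x)$, so $c_k(t)\in\vect\mathcal{F}_0(x)$ for small $t$. To upgrade this from small $t$ to all of $\RR$ (needed for $\mathcal{F}_\infty$) and to an arbitrary interval $(\alpha_1,\alpha_2)$, I would compose with the linear quotient projection $\Pi\colon T_x\bigmanif\to T_x\bigmanif/\vect\mathcal{F}_0(x)$: the analytic map $\Pi\circ c_k$ vanishes on a neighbourhood of $0$, hence identically on the connected set $\RR$ by the identity theorem, so $c_k(t)\in\vect\mathcal{F}_0(x)$ for every $t$. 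This gives both $\vect\mathcal{F}_\infty(x)\subset\vect\mathcal{F}_0(x)$ and $\vect\mathcal{F}_{(\alpha_1,\alpha_2)}(x)\subset\vect\mathcal{F}_0(x)$, which, combined with the smooth-case inclusions, collapses the chain to equalities.

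I expect the main obstacle to be concentrated in the analytic step: the convergence of the $\mathrm{ad}$-exponential series and the identity-theorem propagation are precisely what is unavailable in the merely smooth setting, and they are also what removes any dependence on where the interval sits relative to $0$. By contrast, the flow-derivative identity is standard (though sign-sensitive in the pushforward convention \eqref{pushforward}), and the smooth inclusions are soft consequences of that identity and the closedness of finite-dimensional subspaces.
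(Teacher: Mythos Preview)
Your argument is the paper's own, recast in direct rather than dual form: the paper pairs $c_k(t)=(\exp(-t\vecfield^0)_*\vecfield^k)(x)$ with a covector $p$ and works with the scalar function $a_k(t)=\langle p,c_k(t)\rangle$, proving the inclusions by comparing annihilators, whereas you argue that $c_k$ and its derivatives sit in a given finite-dimensional (hence closed) subspace. Both rest on the same identity $c_k^{(j)}(t)=\bigl(\exp(-t\vecfield^0)_*\mathrm{ad}_{\vecfield^0}^{\,j}\vecfield^k\bigr)(x)$, and the analytic step (your Taylor series plus identity theorem versus the paper's ``all $a_k^{(j)}(0)$ vanish, hence $a_k\equiv0$'') is the same mechanism.

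Your caution about the smooth inclusion $\vect\mathcal{F}_0(x)\subset\vect\mathcal{F}_{(\alpha_1,\alpha_2)}(x)$ requiring $0\in[\alpha_1,\alpha_2]$ is in fact sharper than the paper, whose proof also writes ``taking $t=0$'' without justifying this. In the merely smooth case the inclusion can genuinely fail for intervals bounded away from $0$: with $\vecfield^0=\partial/\partial x_1$ on $\RR^2$ and $\vecfield^1=f(x_1)\,\partial/\partial x_2$ for a smooth $f$ satisfying $f(0)\neq0$ and $f\equiv0$ on $(\alpha_1,\alpha_2)$, one gets $\vect\mathcal{F}_0(0)=\RR\,\partial/\partial x_2$ but $\vect\mathcal{F}_{(\alpha_1,\alpha_2)}(0)=\{0\}$. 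As you note, the paper's applications only involve intervals with $0$ in their closure, and the analytic equalities are unaffected since the identity theorem forces $c_k(\RR)\subset\vect\mathcal{F}_0(x)$ regardless of where the interval lies.
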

\begin{proof} For $x$ in $\bigmanif$, $k$ in $\{1,\ldots,m\}$ and $p$ an element of
  $T_x^*\bigmanif$, define the smooth map $a_k\!:\RR\to\RR$ by
  $a_k(t)=\left\langle p,
      \exp(-t\vecfield^0)_*\vecfield^k\,(x)\right\rangle$.
  Since $\frac{\mathrm{d}}{\mathrm{d}t} \left(\exp(-t Y)_* Z\right)(x)
    =
    \left(\exp(-t Y)_* [Y,Z]\right)(x)$
    for any vector fields $Y$ and $Z$, one has, for all $t$,
  \begin{equation}
    \label{eq:5}
     \frac{\mathrm{d}^j\,a_k}{\mathrm{d}t^j}(t)=\left\langle p\,,\,
      \exp(-t\vecfield^0)_* \,\mathrm{ad}_{\vecfield^0}^j \vecfield^k\,(x)\right\rangle.
  \end{equation}
  If $p$ is in the annihilator of $\vect\mathcal{F}_{(\alpha_1,\alpha_2)}(x)$, then 
  $a_k(t)=0$ for all 
  $k$ in $\{1,\ldots,m\}$ and $t$ in $(\alpha_1,\alpha_2)$;  differentiating $j$ times, taking $t=0$ and
  using \eqref{eq:5} implies
  \\\mbox{$\ $}\hfill
  $\left\langle p, \mathrm{ad}_{\vecfield^0}^j
    \vecfield^k\,(x)\right\rangle=0\,,\
    k\in\{1,\ldots,m\}\,,\ j\in\NN,
    $
    \hfill\mbox{$\ $}\\
  hence $p$  in the annihilator of $\vect\mathcal{F}_0(x)$. This proves
  $\vect\mathcal{F}_0(x)\!\subset\!\vect\mathcal{F}_{(\alpha_1,\alpha_2)}(x)$, while
  $\vect\mathcal{F}_{(\alpha_1,\alpha_2)}(x) \!\subset\!\vect\mathcal{F}_\infty(x)$ is obvious by definition.
  To prove the reverse inclusion, assume $p$ in the
  annihilator of $\vect\mathcal{F}_0(x)$. For each  $k$ in
  $\{1,\ldots,m\}$, $p$ vanishes on all vectors
  $\mathrm{ad}_{\vecfield^0}^j\vecfield^k\,(x)$, $j\in\NN$, and
   \eqref{eq:5} implies
  $\frac{\mathrm{d}^j a_k}{\mathrm{d}t^j}(0)\!=\!0$ for all $j$.
  Since the map $a_k$ is now real analytic, it must be identically
  zero on $\RR$. This proves 
  $\vect\mathcal{F}_\infty(x)\subset\vect\mathcal{F}_0(x)$, hence $\vect\mathcal{F}_0(x) =\vect\mathcal{F}_{(\alpha_1,\alpha_2)}(x) =\vect\mathcal{F}_\infty(x)$.
\end{proof}

\begin{proof}[Proof of \cref{prop-crochets}]
  Apply \cref{lem-ana}  with
  $\alpha_1=t_1\pm\varepsilon$, $\alpha_2=t_2\pm\varepsilon$,
  taking into account the fact that, for any $\varepsilon>0$,
  \par $\ $ \hfill $\displaystyle \vect\mathcal{F}_{(t_1+\varepsilon,t_2-\varepsilon)} \subset
  \vect\!\!\bigcup_{\tau\in[t_1,t_2]}\hspace{-.6em}E^U_\tau\!(x) \subset
  \vect\mathcal{F}_{(t_1-\varepsilon,t_2+\varepsilon)}. $\hfill $\,$
\end{proof}

The sufficient conditions (\eqref{eq:10}, \eqref{eq:10bis}, \eqref{eq:13}, \eqref{eq:13bb}\ldots)
from \cref{thm:CDC-gen,thm:xbar,thm:1tour} now have,
if the vector fields $\vecfield^0,\ldots,\vecfield^m$ are real analytic
(it is more intricate in the $C^\infty$ case),
the following consequences according to \cref{prop-crochets}:
\begin{itemize}
\item Condition \eqref{eq:10} implies
  $\vect(\mathcal{F}_0\cup\{\vecfield^0\})(x)=T_x\bigmanif$ (for
  all $x$); 
\item Condition \eqref{eq:13} implies $\vect\mathcal{F}_0(\bar
  x(0))=T_{\bar x(0)}\bigmanif$
  \\(and $\vect\mathcal{F}_0(\bar x(t))=T_{\bar x(t)}\bigmanif$ for all $t$ by invariance);
\item Condition \eqref{eq:13bb} implies $\left(\pi\circ\exp(\tf\vecfield^0) \right)'\!\!(\bar x(0))\,
  \bigl(\vect\mathcal{F}_0(\bar x(0))\bigr) = T_{\pi(\bar x(\tf))}\manifM$.
\end{itemize}
These bracket conditions on $\mathcal{F}_0$ appear in some
sufficient conditions for controllability, but are by no means sufficient by
themselves, as evidenced by \cref{exemple-illustration}: the system
there always satisfies these Lie bracket conditions but may be
controllable or not according to the value of parameters
that change the shape of $U$.

It is important to note that these bracket conditions are quite more
restrictive than the classical bracket generating condition  discussed in
\cref{section:state-of-the-art} because $\mathcal{F}_0$ discards all Lie
brackets like $[\vecfield^1,[\vecfield^0,\vecfield^1]]$ or $[\vecfield^1,\vecfield^2]$
involving control vector fields more than once, compare \eqref{eq:45}, \eqref{eq:3} and
\cref{def:larc}.
In fact, full rank of $\mathcal{F}_0$ amounts to controllability \emph{of the
linear approximation}; see, \emph{e.g.}, the discussion in \cite[Section~3.2]{Coro07} around
Equation (3.35) for the case where $U$ is a neighborhood of the
origin, or \eqref{eq:24} and \cref{lem:lin} in the
present paper for the case where $U$ is not.
Studying controllability of systems that are bracket generating but do
not satisfy our condition on $\mathcal{F}_0$, while the control is constrained
to a set $U$ that is not a neighborhood of the origin,
is outside the scope of this paper. 
\cref{exemple-SR} displays some systems in this category for which we prove
controllability by ad-hoc methods, whose generalisation could be
a topic of further investigation.

\begin{example}[illustration of the paper's results]
  \label{exemple-illustration}
  Consider the following  system, of the form \eqref{sys-Iphi},
  with $F^1,F^2$ that depend on $\varphi$
  only, $I=(I_1,I_2,I_3)\in O=\manifM=\RR^3$, $\varphi\in\Sun$ and control
  $u=(u_1,u_2)$. 
\begin{gather}
  \label{eq:48}
  \begin{split}
      \begin{pmatrix}
        \dot I_1 \\ \dot I_2 \\ \dot I_3
      \end{pmatrix}
      \!&= \LL(\theta,\varphi,u):=
      \begin{pmatrix}
        \cos\varphi & -\sin\varphi & 0 \\
        \sin\varphi & \cos\varphi & 0 \\
        0 & 0 & 1
      \end{pmatrix}
      \begin{pmatrix}
        \cos\theta & 0& -\sin\theta \\ 0 & 1 & 0  \\ \sin\theta & 0& \cos\theta
      \end{pmatrix}
      \begin{pmatrix}
         u_1\\ 0 \\ u_2
       \end{pmatrix}
      \,,
      \\
           \dot\varphi\ \;&=\, 1\,,
      \\
           u\ \;&\in\,  U=\{(u_1,u_2)\in\RR^2,\;{u_1}^2+{u_2}^2\leq1,\;|u_2|\leq
  u_1\tan\alpha\,\}\,,
      \end{split}
\end{gather}
where $\theta,\alpha$ are fixed parameters,
$-\frac\pi2\!<\!\theta\!<\!\frac\pi2$, $0\!<\!\alpha \!<\! \frac\pi2$,
$0\!<\!|\alpha|\!+\!|\theta|\!<\!\frac\pi2$; the set $U$ is depicted in \cref{fig-a}.
This system is also of the form \eqref{sys} on $\bigmanif\!=\!\RR^3\!\times\!\Sun$.
\begin{figure}[h]  
 	\centering
	\includegraphics[width=.25\textwidth]{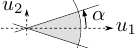}
 	\caption{\label{fig-a} Control set $U\subset\RR^2$ for System~\eqref{eq:48}.}
\end{figure}
  
The definition \eqref{eq:26a} of $\breve{E}^U\!(I)\!\subset\! T_I M\!=\!\RR^3$ yields
$\breve{E}^U\!(I)=\cup_{\varphi\in\Sun}\LL(\theta,\varphi,U)$ for the present system, where
$\LL(\theta,\varphi,U)$ is (in spherical coordinates language) a planar piece in the meridian half plane of
longitude $\varphi$, isometric to $U$ (see \cref{fig-a}), rotated around the origin to bring its axis to lattitude $\theta$;
taking the union over $\varphi$ yields the volume pictured in \cref{fig},
qualitatively different depending whether $|\theta|$ is larger or smaller than
$\alpha$. Analytically, this volume is
$\{v\!\in\!\RR^3,\,\|v\|\leq 1\text{ and }\sqrt{v_1^2\!+\!v_2^2}\tan(\theta-\alpha) \leq v_3\leq
    \sqrt{v_1^2\!+\!v_2^2}\tan(\theta+\alpha)\,\}$.
\begin{figure}[h] 
	\centering
		\begin{minipage}[b]{.4\linewidth}
			\begin{center}
				\includegraphics[width=1\textwidth]{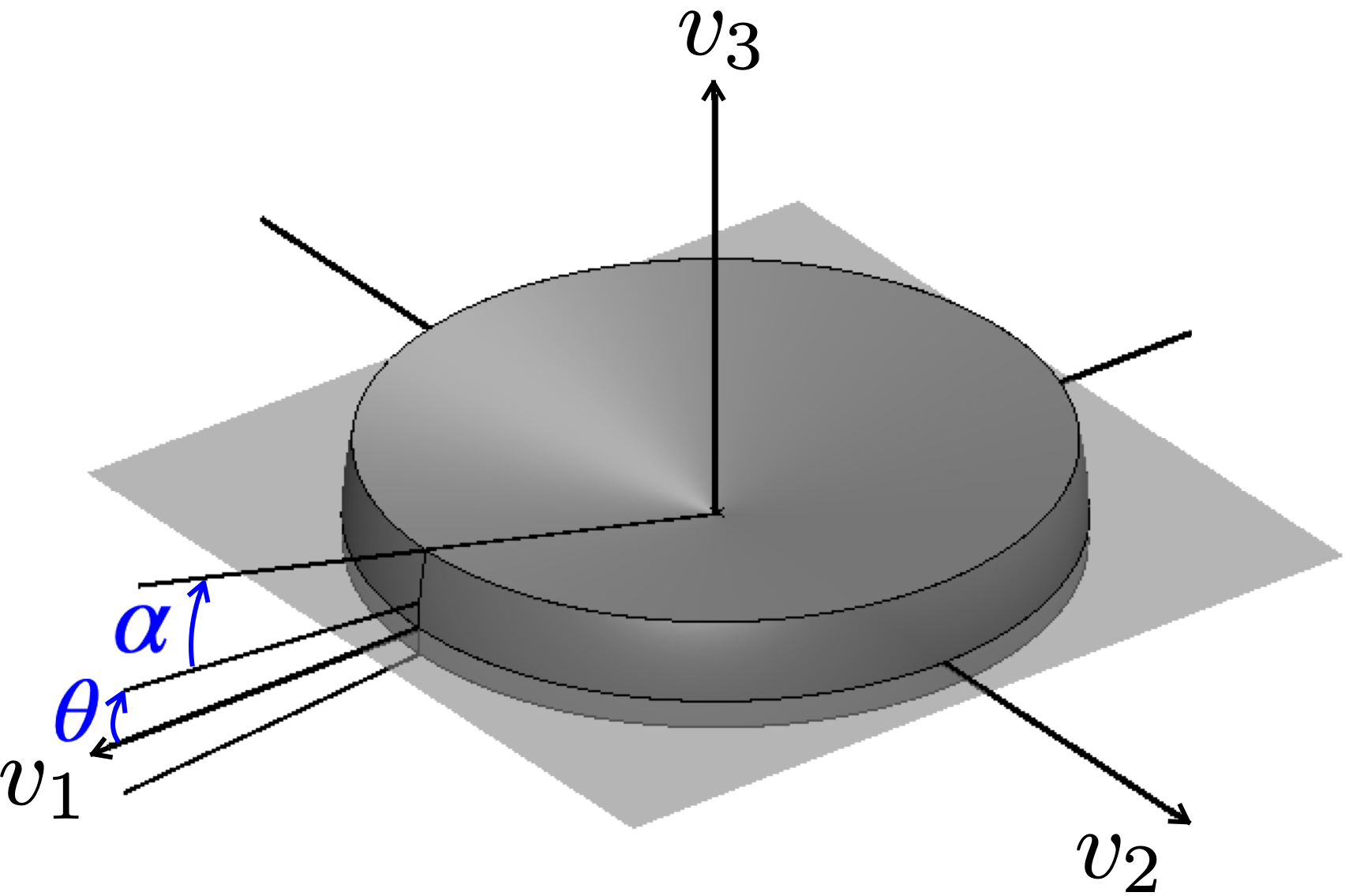}
				\\[0.3em](a) $\breve{E}^U\!(I)$ if $0\!<\!\theta\!<\!\alpha$
			\end{center}
		\end{minipage}
	\hspace{3em} 
		\begin{minipage}[b]{.38\linewidth}
			\begin{center}
                          \includegraphics[width=1\textwidth]{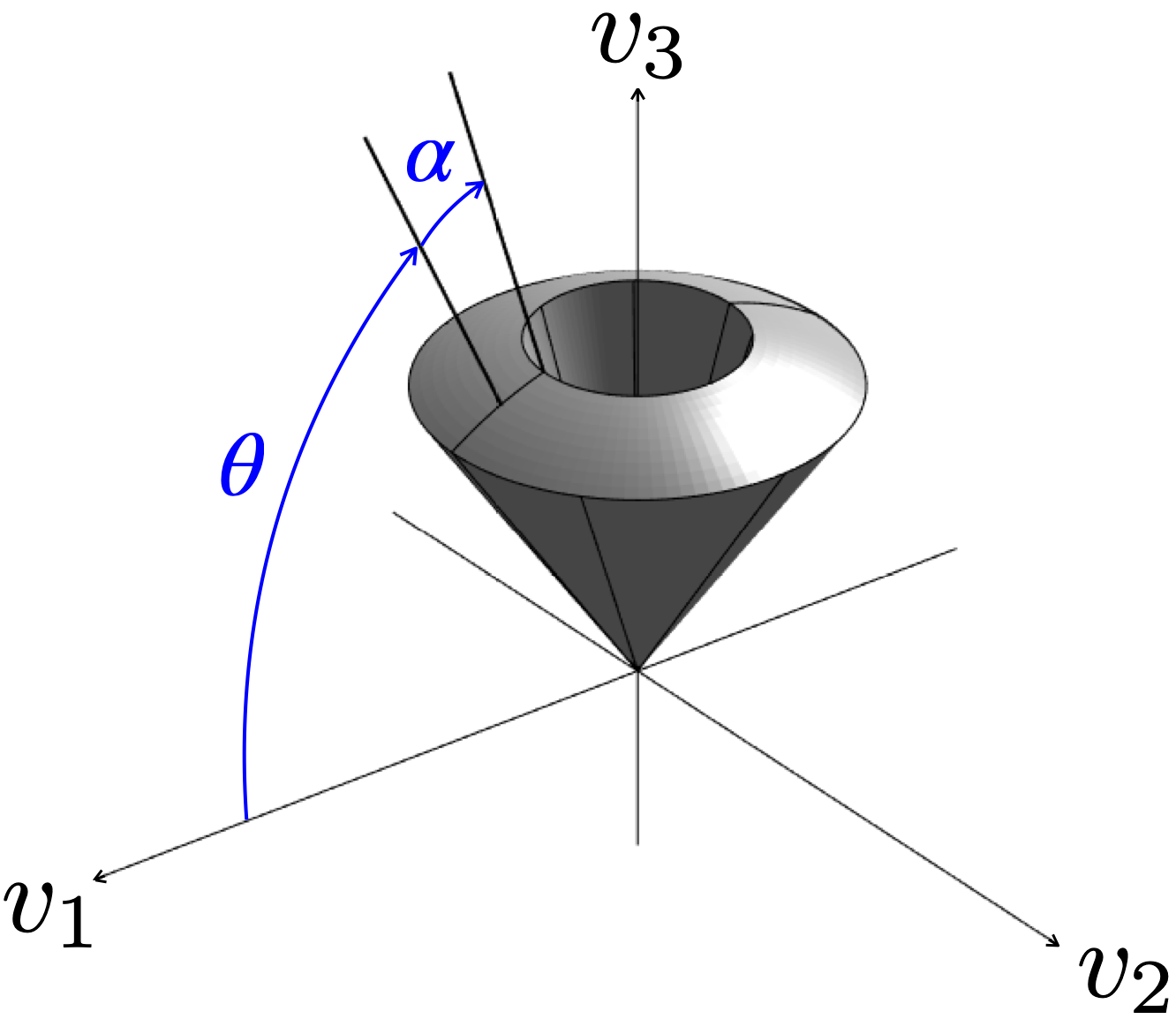}
				\\[-.4em](b) $\breve{E}^U\!(I)$ if $\theta>\alpha$
			\end{center}
		\end{minipage}
	\caption{\label{fig}
		Set $\breve{E}^U\!(I)$ for System~\eqref{eq:48}.
	}
\end{figure}
If $|\theta|<\alpha$, $\breve{E}^U(I)$  has the effect shown
in \cref{fig}-(a), its conic hull is the whole $T_I\manifM$,
Condition~\eqref{eq:10bis} holds, \cref{thm:CDC-gen} yields global controllability and 
\cref{thm:1tour} local controllability over one period.
If $\theta>\alpha$, $\breve{E}^U(I)$ has the effect shown in
\cref{fig}-(b), with a symmetric picture contained in the lower half space
if $\theta<-\alpha$; in both cases, its  dual cone is a cone of
  revolution of half angle $|\theta|-\alpha$, with nonempty interior; as a conclusion,
\cref{prop:obstruc-per} yields non-controllability if $|\theta|>\alpha$.
If $|\theta|=\alpha$, one sees by inspection that the system is not controllable
($\dot I_3$ has a fixed sign), but no result from the paper applies; indeed,
$\cone \breve{E}^U(I)$ is a closed half space, so that
\cref{thm:CDC-gen} (or \ref{thm:1tour}) does not apply, but neither does
\cref{prop:obstruc-per} because the dual cone to $\cone \breve{E}^U(I)$ is only a half line.

From the point of view of the remarks on Lie brackets made just before this example, it is
clear that $\mathcal{F}_0(I,\varphi)$ spans $T_I\manifM$ at all points
(use $F^1,F^2,\partial F^1\!/\partial\varphi,\partial F^2\!/\partial\varphi$), for any value of the parameters:
we clearly see that full rank of $\mathcal{F}_0$ is not, here, sufficient
for controllability, that occurs for some values of the parameters only.

\medskip

Let us now enrich this academic example by making $\theta$ the fourth component $I_4$ of the
state, $\alpha\in[0,\pi/4]$ being still a constant parameter:
\begin{equation}
  \label{eq:50}
    \begin{pmatrix}
      \dot I_1 \\ \dot I_2 \\ \dot I_3
    \end{pmatrix}
    = 
    \LL(I_4,\varphi,(u_1,u_2))
    \,,\ \dot I_4\;=\;u_3\cos^2(2I_4) ,\ \dot\varphi\;=\;1
     ,\ u\in U'\!=U\!\times\![-1,1] ,
\end{equation}
where the control is now $(u_1,u_2,u_3)$ and the state
$(I_1,I_2,I_3,I_4,\varphi)\in\bigmanif=\manifM\times\Sun$, $\manifM=\RR^3\times\left(-\frac\pi4,\frac\pi4\right)$.
The 
factor $\cos^2(2I_4)$ in $\dot I_4$ prevents solutions from leaving
$\bigmanif$
in finite time with admissible controls.
If $\alpha\geq\pi/4$, Condition~\eqref{eq:10bis} holds everywhere and conclusions are the
same as for System \eqref{eq:48} in the case $|\theta|<\alpha$.
If $\alpha<\pi/4$, Condition~\eqref{eq:10bis} holds in the region
$R_1=\{(I_1,I_2,I_3, I_4,\varphi),\;| I_4|<\alpha\}$ and
fails in
$R_0=\{(I_1,I_2,I_3, I_4,\varphi),\; I_4=\pm\alpha\}$ and
$R_2=\{(I_1,I_2,I_3, I_4,\varphi),\;| I_4|>\alpha\}$.
\Cref{thm:CDC-gen} does not apply globally, \cref{thm:1tour} yields local
controllability in one period in region $R_1$, \cref{prop:obstruc-per} tells us that the
system is not locally controllable around its periodic solutions in region $R_2$,
and we see by inspection that this local controllability does not hold either in $R_0$.
Interestingly, this system is also globally
controllable if $\alpha<\pi/4$ although local controllability does not hold in half of the space and
\cref{thm:CDC-gen} does not predict controllability.
Indeed, starting
from some $(I_1^0,I_2^0,I_3^0, I_4^0,\varphi^0)$ to reach
$(I_1^f,I_2^f,I_3^f, I_4^f,\varphi^f)$, one may always use the control
$u_3$ to reach the region $R_1$
(for instance go to $(I_1^0,I_2^0,I_3^0,0,\varphi^0)$), then use the
controls $u_1,u_2$ to reach $I_1^f,I_2^f,I_3^f$ (possible as in \eqref{eq:48}
with $ I_4=0$) with constant $ I_4$ and then use again $u_3$ to
steer $ I_4$ from $0$ to $ I_4^f$.
This displays a case where global controllability holds without local controllability
holding everywhere, hence illustrating a point in \cref{rem-loc}.
\end{example}

\begin{example}[going beyond the paper's results] 
  \label{exemple-SR}
  Consider the following system of the form \eqref{sys-Iphi} with
  scalar control
  ($m=1$, $\bigmanif=\manifM\times\Sun$, $\manifM=\RR^3$, $d=4$):
  \begin{equation}
  \label{eq:52}
  \begin{split}
    &
    \begin{pmatrix}
      \dot{I}_1 \\ \dot{I}_2 \\ \dot{I}_3
    \end{pmatrix}
    =
    \,u \left( \cos\varphi
    \begin{pmatrix}
      1 \\ 0 \\ -I_2/2
    \end{pmatrix}
    +\sin\varphi
    \begin{pmatrix}
      0 \\ 1 \\ I_1/2
    \end{pmatrix}
    \right)\,,
    \\ & \ \ \ \dot\varphi=1\,,
    \hspace{15em}
    u\in U\subset\RR\,.
  \end{split}
\end{equation}
The set $U$ will be either $[-1,1]$ or $[0,1]$.
The system also reads $\dot x=\vecfield^0(x)+u\vecfield^1(x)$, with
$x=(I_1,I_2,I_3,\varphi)$ and the vector fields $\vecfield^0$ and
$\vecfield^1$ defined by:
\begin{equation}
  \label{eq:53}
  \vecfield^0=\frac\partial{\partial\varphi}\,,\quad
  \vecfield^1=\cos\varphi\Bigl(\frac\partial{\partial I_1}-\demi I_2\frac\partial{\partial I_3}\Bigr)
  +\sin\varphi\Bigl(\frac\partial{\partial I_2} +\demi I_1\frac\partial{\partial I_3}\Bigr)\,.
\end{equation}
With reference to the second paragraph of this \cref{sec-comments+examples}, 
the vector fields $\vecfield^0$, $\vecfield^1$, $[\vecfield^0,\vecfield^1]$ and
$[\vecfield^1,[\vecfield^0,\vecfield^1]]$ are linearly independent, 
hence the bracket generating condition is satisfied (this implies global controllability
from \cref{th:BB} if $\, U=[-1,1]$), while
the rank of $\mathcal{F}_0$ is only 2: it is spanned by $\vecfield^1$ and
$[\vecfield^0,\vecfield^1]$ and vanishes on the differential forms
$\demi I_2\mathrm{d}I_1 - \demi I_1\mathrm{d}I_2+\mathrm{d}I_3$ and $\mathrm{d}\varphi$.
We are therefore in the case where our sufficient conditions cannot hold.
  Indeed, a simple computation from \eqref{eq:2}, \eqref{eq:1} and \eqref{eq:26a} yields,
  $E^U(I,\varphi)=\breve{E}^U(I)\times\{0\}$ with
\begin{equation}
  \label{eq:11}
  \breve{E}^U(I)=\{(v_1,v_2,v_3)\in T_I\manifM=\RR^3,\,
  \demi I_2 v_1-\demi I_1 v_2+v_3=0
  \text{ and } v_1^{\,2}+v_2^{\,2}\leq1\}\,,
\end{equation}
whether $U$ is $[-1,1]$ or $[0,1]$ (in the former case, it is ``covered
twice''),
so that $\cone\breve{E}^U\!(I)$ is a proper vector subspace of $T_I\manifM$ and
none of the sufficient conditions in Theorems~\ref{thm:CDC-gen}, \ref{thm:1tour} or
\ref{thm:xbar} hold true, while the polar cone of $\breve{E}^U\!(I)$ is a line in $T^*_I\manifM$, hence has an empty interior,
and one may not either apply \cref{prop:obstruc-xbar} or \cref{prop:obstruc-per} to infer
non-controllability.
No result from the present paper applies here.

Let us however investigate local controllability in the spirit of the conclusions of
\cref{thm:1tour}, by ad-hoc techniques
outside the scope of the results of the paper.
  We are going to prove the following three properties for System~\eqref{eq:52} (recall that the state $x$ is
$(I,\varphi)$, with $I\in\RR^3$, $\pi(I,\varphi)=I$, and the period $T(x)$ is $T(I)=2\pi$):
  \begin{enumerate}[label=(\alph*)]
    \item\label{it-exSR-0} $\pi\bigl(\Acc^{[-\varepsilon,\varepsilon]}_{\,T(I)}(I,\varphi)\bigr)$ is a neighborhood of $I$, 
    \item\label{it-exSR-a} $\pi\bigl(\Acc^{[0,\varepsilon]}_{\,T(I)}(I,\varphi)\bigr)$ \emph{is not} a neighborhood of $I$, 
    \item\label{it-exSR-b} $\pi\bigl(\Acc^{[0,\varepsilon]}_{\,2\,T(I)}(I,\varphi)\bigr)$ is a neighborhood of $I$,
  \end{enumerate}
  for any $\varepsilon$, $0<\varepsilon\leq1$.
Point (a) is in addition of the global controllability mentioned above when $U=[-1,1]$,
while points (b) and (c) concerning the case $U=[0,1]$, more in the spirit of the paper,
display an interesting behavior where local controllability is obtained over two periods but
not over one.
Before establishing them, note that the general solution of \eqref{eq:52} starting from
$(I_1^0,I_2^0,I_3^0,\varphi^0)$ at time zero is given by $\varphi(t)=\varphi^0+t$ and, with
complex notations for $(I_1,I_2)$, 
  \begin{equation}
    \label{eq:28}
    \begin{split}
      I_1(t)+i\, I_2(t)\,&=I_1^0+i\, I_2^0+\int_0^{t }u(s)\,e^{i\,(\varphi^0+s)}\mathrm{d}s\,, \\
      I_3(t)\,&=I_3^0
      +\frac12 \iint_{0\leq r\leq s\leq
        t}\!\!\!\!u(s)\,u(r)\sin(s-r)\,\mathrm{d}r\,\mathrm{d}s
      \\
      & \phantom{=I_3^0}\ 
      +\frac12\,\Im\Bigl(
                     (I_1^0-i\, I_2^0)\,
                     \bigl( I_1(t)-I_1^0+i\, (I_2(t)-I_2^0) \bigr)   
      \Bigr)\,,
    \end{split}
  \end{equation}
For intuition behind equations, recall that $\dot I_3=\frac12(I_1\dot I_2-I_2\dot I_1)$
infers that
the variation $I_3(t_2)-I_3(t_1)$ is the area swept from the origin by the plane curve
$t\mapsto(I_1(t),I_2(t))$ on the interval $[t_1,t_2]$ and
$\varphi$ is (if $u$ is positive) the polar angle of the velocity vector on that 
curve.
It turns out that the area swept from the origin by a closed curve generated by a
non-negative control over a time interval of length at most $2\pi$ cannot be
negative:
    \begin{equation}
    \label{eq:40}
    \Bigl(u(.)\geq0\text{ and } I_1(2\pi)=I_1(0) \text{ and } I_2(2\pi)= I_2(0)\,\Bigr)
    \ \ \Rightarrow\ \ 
    I_3(2\pi)\geq I_3(0)\,.
  \end{equation}
To prove this, first decompose the integral in the expression for $I_3(2\pi)$ in \eqref{eq:28}: 
\begin{equation}
    \label{eq:55}
    \textstyle
    \iint_{0\leq r\leq s\leq 2\pi}=
  \iint_{0\leq r\leq s\leq \pi}+\iint_{\pi\leq r\leq s\leq 2\pi}+\iint_{(s,r)\in[\pi,2\pi]\times[0,\pi]}\,.
\end{equation}
  From \eqref{eq:28}, $(I_1(2\pi),I_2(2\pi))=(I_1^0,I_2^0)$ implies 
  $\int_0^{2\pi}u(s)\sin(s-r)\mathrm{d}s=0$ for any $r$,
  hence $\iint_{(s,r)\in[0,2\pi]\times[0,\pi]}u(s)\,u(r)\sin(s-r) \mathrm{d}r\mathrm{d}s=0$,
  implying that the last term in \eqref{eq:55} is equal to
  $\textstyle-\iint_{(s,r)\in[0,\pi]\times[0,\pi]}
  \sin(s-r)\,u(s)\,u(r)\, \mathrm{d}s\, \mathrm{d}r$,
  which is zero by symmetry around the axis $r=s$;
  hence the integral giving $I_3(2\pi)$ in \eqref{eq:28} reduces to the region
  $\{0\!\leq\! r\!\leq\! s\!\leq\! \pi\}\cup\{\pi\!\leq\! r\!\leq\! s\!\leq\! 2\pi\}$, where
  $u(s)\,u(r)\,\sin(s-r)$ is non negative; this proves \eqref{eq:40}.
  Point~\ref{it-exSR-a} follows.

\begin{figure}    
  \begin{center}
    \hspace{-.04\linewidth}
  \begin{minipage}[b]{.53\linewidth}
    \includegraphics[width=1\textwidth]{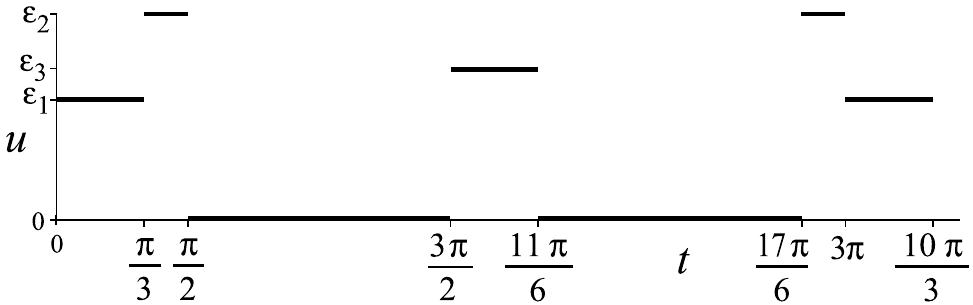}

      \medskip
      
      Control $u(.)$, on $[0,10\pi/3]$\quad{\large$\blacktriangle\blacktriangle$}

      \bigskip
      
      Path followed by $(I_1(t),I_2(t))$ $\ $ {\large$\blacktriangleright\blacktriangleright$}
      \\[-1ex]
  \end{minipage}
  \hspace{-.08\linewidth}
  \begin{minipage}[b]{.49\linewidth}
    \includegraphics[width=1.0\textwidth]{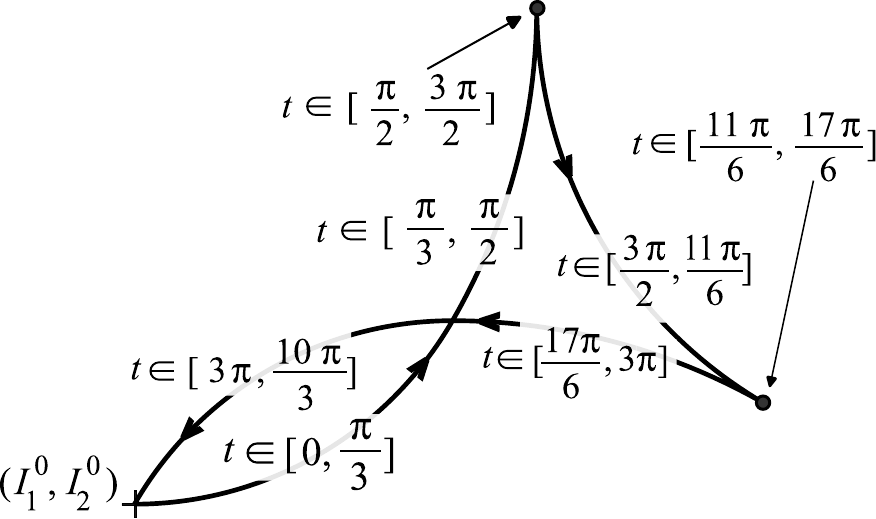}
  \end{minipage}
  \caption{\label{fig2a}
    \emph{Left:} the control used for Point~\ref{it-exSR-0} in \cref{exemple-SR}.
     The constants $\varepsilon_1,
     \varepsilon_2, \varepsilon_3$ are tuned according to
     \eqref{eq:59}, where $\varepsilon_2$ may be taken arbitrarily small.
    \emph{Right:} the closed curve
    $t\mapsto(I_1(t),I_2(t))$, $t\in[0,\frac{10\pi}3]$.
      We drew it from \eqref{eq:28} with
      $u(.)$ given above, $\varepsilon_2=1$ and
      $\varphi(0)=0$.
      Since the curve in the $(I_1,I_2)$-plane scales according to
      $\varepsilon_2$, and the choice of $\varphi(0)$ rotates the
      curve, no axis or units appear on the figure;
      changing $(I_1^0,I_2^0)$ translates the curve and changes the way the $I_3$ component
      of the solution scales with $\varepsilon_2$.
    }
\end{center}
\end{figure}

Let us now turn to Point \ref{it-exSR-b}.
Consider the solution starting from an arbitrary $(I_1^0,I_2^0,I_3^0)$
with the piecewise constant control depicted in
\cref{fig2a} (left) for $t$ in $[0, \frac{10}3\pi]$, continued by zero
on $[\frac{10}3\pi,4\pi]$, with constants $\varepsilon_1,
\varepsilon_2, \varepsilon_3$ chosen such that
\begin{equation}
  \label{eq:59}
  \varepsilon_1=\sqrt{\frac{\left(2 \pi +3\right) \sqrt{3}-5 \pi}{2 \pi -3 \sqrt{3}}}\,\varepsilon_2\,,\ \
  \varepsilon_3=(\sqrt{3}-1)\,\varepsilon_2
  \,,\ \
  0<\varepsilon_1<\varepsilon_3<\varepsilon_2<\varepsilon\,.
\end{equation}
The second relation amounts to
$(I_1(\frac{10\pi}3),I_2(\frac{10\pi}3))=(I_1^0,I_2^0)$,
and the first one to $I_3(\frac{10\pi}3)=I_3^0$
(use formulas \eqref{eq:28})\footnote{
    Once the 2\textsuperscript{nd} condition ensures that the projection on the 
    $(I_1,I_2)$ plane is a closed curve like the one in \cref{fig2a} resembling a goldfish,
    $I_3(\frac{10\pi}3)=I_3^0$ means that the area of the fish's ``body'', run
    counter-clockwise, equals the area of the ``tail'', run clockwise.
    See the two lines after \eqref{eq:28}.
  }.
On the time interval $[0,\frac\pi3]$, the control is constant equal to $\varepsilon_1$,
that belongs to the topological interior of $[0, \varepsilon]$;
the linear approximation on that interval hence reads
$\dot{\deltaI}=A(t) \deltaI+B(t)\deltau$, with no constraint on $\deltau$ and
  $$\textstyle A(t)=\varepsilon_1\left(
  \begin{smallmatrix} 0&0&0 \\ 0&0&0 \\ -\sin\varphi(t) & \cos\varphi(t) &0 \end{smallmatrix}
\right)
\ \text{and}\ 
B(t)=\varepsilon_1\left(
  \begin{smallmatrix} \cos\varphi(t) \\ \sin\varphi(t) \\ I_1(t)\sin\varphi(t) -I_2(t)\cos\varphi(t) \end{smallmatrix}
\right).$$
The vectors
$B(t)$, $(\frac{\mathrm{d}}{\mathrm{d}t}-A(t))B(t)$, $(\frac{\mathrm{d}}{\mathrm{d}t}-A(t))^2B(t)$
are always a basis of $\RR^3$, as seen in the following formulas, where
the $t$ argument is omitted in the right-hand sides
($\varphi$ should be $\varphi(t)$, or $\varphi^0+t$):
$$\textstyle 
(\frac{\mathrm{d}}{\mathrm{d}t}-A(t))B(t)=\varepsilon_1^{\,2}\left(
  \begin{smallmatrix} -\sin\varphi \\ \cos\varphi \\ I_1\cos\varphi +I_2\sin\varphi \end{smallmatrix}\right)
  \!,\ 
(\frac{\mathrm{d}}{\mathrm{d}t}-A(t))^2B(t)=-\varepsilon_1^{\,2} B(t)+\left(
  \begin{smallmatrix} 0 \\ 0 \\ 2\, \varepsilon_1^{\,3}
  \end{smallmatrix}
\right).$$
Controllability of the linear approximation follows, according to \cite{Silv-Mea67} (see
also our remarks after \eqref{eq:32}).
This implies that $\pi\bigl(\Acc^{[0,\varepsilon]}_{\,\pi/3}(I_1^0,I_2^0,I_3^0,\varphi^0)\bigr)$ is a
neighborhood of $(I_1(\frac\pi3),I_2(\frac\pi3),I_3(\frac\pi3))$ according to the classical
linear test for controllability (implicit function theorem applied to the
end-point mapping) because the control set $U=[0,\varepsilon]$ is a neighborhood of the
reference control around which linearization is performed. This implies Point
\ref{it-exSR-b} because  $\Acc^{[0,\varepsilon]}_{\,2\,T(I)}(I^0,\varphi^0)$ contains the
image of $\Acc^{[0,\varepsilon]}_{\,\pi/3}(I^0,\varphi^0)$ by the time-varying flow
corresponding to the reference control from \cref{fig2a} on the time-interval
$[\pi/3,4\pi]$.

  Point~\ref{it-exSR-0} is easier than Point~\ref{it-exSR-b}, negative controls being allowed:
  skip the two intervals or length $\pi$ on which $u\equiv0$ in \cref{fig2a}, left,
  and replace $\varepsilon_3$ with $-\varepsilon_3$;
  the obtained control on $[0,4\pi/3]$ yields the same curve on \cref{fig2a}, right,
  without the stops at each cusp, but with the same linearized controllability.
\end{example}

\appendix

\section{Controllability of time-varying linear systems with constrained control}
\label{appendix-lin}

Consider a time-varying linear control system, with constraints on the
control:
\begin{equation}
  \label{eq:lin}
  \dot{z}  =A(t) z+B(t) v\,,\ \ v\in V\,,
\end{equation}
where the state $z$ belongs to $\RR^d$, $V$ is a compact subset of $\RR^m$, and $t\mapsto A(t)$ and
$t\mapsto B(t)$ are smooth maps $\RR\to\RR^{d\times d}$
and $\RR\to\RR^{d\times m}$ respectively.
This appendix is devoted to giving conditions for
\begin{equation}
  \label{eq:accTlin-full}
  \AccLin^{V}_{t_0,\tf}(z_0)=\{z(\tf),\;
  \text{with }t\mapsto(z(t),v(t))
  \text{ solution of \eqref{eq:lin}},\;
  z(t_0)=z_0\}
\end{equation}
  to be a neighborhood of the origin.
  It is known that $\AccLin^{V}_{t_0,\tf}(z_0)$ is
convex, at least if $V$ is compact, whether $V$ is convex or not, see, \textit{e.g.},
\cite[Chapter 2, Theorem 1A (appendix), p.164]{Lee-Mar86}.

It is customary to define the transition matrix (of $A(.)$) as the map
$(t_1,t_2)\mapsto \phi(t_2,t_1)\in\RR^{d\times d}$ such that,
for any  $t_0\in\RR$ and  $z_0\in\RR^d$, $t\mapsto z(t)=\phi(t,t_0)z_0$
is the solution of
$\dot{z}  =A(t)z$, $z(t_0)=z_0$;
it satisfies
\begin{align}
  \label{eq:31}
  \frac{\partial \phi}{\partial t_2}(t_2,t_1)= A(t_2)\, \phi(t_2,t_1)
  \,,\ \ \ 
  \frac{\partial \phi}{\partial t_1}(t_2,t_1)= -\phi(t_2,t_1)\,A(t_1)
  \,,\ \ \
  \phi(t,t)=I
  \,.
\end{align}
The change of variables $\zeta=\phi(t_0,t)z$
yields, for any $t_0$,
another form of System \eqref{eq:lin}:
\begin{equation}
  \label{eq:44}
  \begin{split}
    &\dot\zeta(t)=\phi(t_0,t)B(t)v(t)\,,\ \ \ \\ & z(t)=\phi(t,t_0)\zeta(t)\,,
  \end{split}
\end{equation}
and the general formula for solutions of \eqref{eq:lin} follows through variation of constants:
\begin{equation}
  \label{eq:35}
  z(t)=\phi(t,t_0)\,z(t_0)+\int_{t_0}^t\phi(t,\tau)B(\tau)v(\tau)\mathrm{d}\tau\,.
\end{equation}

If the convex hull of $V$ is a neighborhood of the origin (in
particular if $V=\RR^m$), one has the following characterization, after \cite[Section 9.2]{Kail80}, \cite{Weis68}, \cite{Silv-Mea67}:
$\AccLin^{V}_{t_0,t}(0)$ is a neighborhood of the origin if and only
if, for $p\in(\RR^d)^*$ (a line vector),
\begin{equation}
  \label{eq:32}
  \left(\vphantom{s^{\frac12}}
    p\,\phi(t_0,s)\,B(s)=0\;\text{for all $s$ in }[t_0,\tf]
  \right)\ \ \Rightarrow\ p=0\,.
\end{equation}
This property is stated in \cite{Kail80} as
``the rows of $s\mapsto \phi(t_0,s)B(s)$ are linearly
independent on the interval $[t_0,\tf]$'', and is equivalent to
  \begin{equation}
    \label{eq:136a}
    \vect\,\{\phi(t_0,s)\,B(s)v,\;s\in[t_0,\tf],\;v\in V\}=\RR^d\,.
  \end{equation}
The case where the convex hull of $V$ is \emph{not} a neighborhood of
the origin, the one of interest here, is treated in \cite{Schm-Bar80}, and sketched in
\cite[Section 2.2]{Lee-Mar86}.
Let us state the precise results needed in \cref{sec-xbar}
(it is difficult to refer to \cite{Schm-Bar80}, that characterizes
    $\cup_{t\in(-\infty,\tf]}\AccLin^{V}_{t,\tf}(0)$ 
    rather than $\AccLin^{V}_{t_0,\tf}(0)$ for a fixed $t_0$).
\begin{proposition}
  \label{prop:lin}
  Assume that $V$ is a compact subset of $\RR^m$ containing zero, and
  that $t_0,\tf$ are two real numbers.
  The accessible set
  $\AccLin^V_{t_0,\tf}(0)$ is a neighborhood of the origin if and only if
  \begin{equation}
    \label{eq:136}
    \cone\,\{\phi(t_0,s)\,B(s)v,\;s\in[t_0,\tf],\;v\in V\}=\RR^d\,.
  \end{equation}
\end{proposition}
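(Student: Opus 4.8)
The plan is to reduce to the variation-of-constants coordinates of \eqref{eq:44} and then prove the two implications separately, using convex-cone duality for the ``only if'' direction and the open mapping lemma \cref{lem-open-map} for the ``if'' direction. Via the change of variables $\zeta=\phi(t_0,t)z$, any solution with $z(t_0)=0$ satisfies $z(\tf)=\phi(\tf,t_0)\,\zeta(\tf)$ with $\zeta(\tf)=\int_{t_0}^{\tf}\phi(t_0,s)B(s)\,v(s)\,\mathrm{d}s$. Since $\phi(\tf,t_0)$ is a linear isomorphism, $\AccLin^V_{t_0,\tf}(0)$ is a neighborhood of the origin if and only if so is
\[
  \widetilde{\Acc}=\Bigl\{\textstyle\int_{t_0}^{\tf}\phi(t_0,s)B(s)\,v(s)\,\mathrm{d}s,\ v(\cdot)\ \text{measurable},\ v([t_0,\tf])\subset V\Bigr\}.
\]
Writing $W=\{\phi(t_0,s)B(s)v,\ s\in[t_0,\tf],\ v\in V\}$, a compact subset of $\RR^d$, the right-hand side of \eqref{eq:136} is exactly $\cone W=\RR^d$, so it suffices to relate $\widetilde{\Acc}$ to $\cone W$.

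For the implication \eqref{eq:136}$\Rightarrow$(neighborhood), I would use that $\cone W=\RR^d$ lets one pick times $s_0<\cdots<s_d$ in the open interval and controls $v_0,\ldots,v_d\in V$ for which $w_i:=\phi(t_0,s_i)B(s_i)v_i$ positively span $\RR^d$, that is $\cone\{w_0,\ldots,w_d\}=\RR^d$; the times may be taken distinct because positive spanning is an open condition and $\phi(t_0,\cdot)B(\cdot)$ is continuous. Taking disjoint intervals $[s_i,s_i+\tau_i]$ and the control equal to $v_i$ there and to $0\in V$ elsewhere defines a smooth map $\Psi(\tau_0,\ldots,\tau_d)=\zeta(\tf)\in\widetilde{\Acc}$ with $\Psi(0)=0$ and $\partial\Psi/\partial\tau_i(0)=w_i$, so that $\Psi'(0)(\RR^{d+1}_+)=\RR^d$. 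Applying \cref{lem-open-map} (with $k=d+1$, $N=\RR^d$, and domain a small cube on which the intervals stay disjoint and inside the interval) then shows that $\Psi$ maps the nonnegative orthant part of a neighborhood of $0$ onto a neighborhood of the origin, which lies in $\widetilde{\Acc}$. This mirrors the endpoint construction already used in the proof of \cref{thm:CDC-gen}.

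For the converse I would argue by contraposition. If $\cone W\neq\RR^d$, then, since $\cone W$ is convex and a convex set dense in $\RR^d$ is all of $\RR^d$, its closure is a proper closed convex cone; hence there is $p\neq0$ with $\langle p,w\rangle\le0$ for every $w\in W$. Then, for every admissible control, $\langle p,\zeta(\tf)\rangle=\int_{t_0}^{\tf}\langle p,\phi(t_0,s)B(s)v(s)\rangle\,\mathrm{d}s\le0$, so $\widetilde{\Acc}$ is contained in the half-space $\{\langle p,\cdot\rangle\le0\}$ and cannot be a neighborhood of the origin. Combining the two implications yields the stated equivalence.

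I expect the main obstacle to be the bookkeeping in the first implication: ensuring the chosen times are distinct and the supporting intervals disjoint while preserving the positive-spanning property, together with the (routine) differentiation of $\Psi$ and the identification of its partial derivatives via variation of constants. The one genuinely load-bearing conceptual point, which is precisely what makes the statement hold with $\cone W$ rather than only with its closure, is the elementary convexity fact that a convex cone dense in $\RR^d$ equals $\RR^d$ (equivalently, through the bipolar theorem, that $\cone W=\RR^d$ is equivalent to its polar cone being reduced to $\{0\}$); I would isolate this explicitly so that the separation argument delivers the unclosed cone condition of \eqref{eq:136}.
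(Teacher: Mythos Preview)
Your argument is sound, and your ``only if'' direction (separating hyperplane applied to the closure of $\cone W$) is essentially the paper's necessity argument. Your ``if'' direction, however, takes a genuinely different route. The paper proves sufficiency by contrapositive: it invokes the (cited, nontrivial) fact that $\AccLin^V_{t_0,\tf}(0)$ is convex \cite[Ch.~2, Thm.~1A]{Lee-Mar86}, so that if it fails to be a neighborhood of the origin it lies in a closed half-space, and the separating covector then violates \eqref{eq:36}, hence \eqref{eq:136}. You instead construct explicit piecewise-constant controls from finitely many pairs $(s_i,v_i)$ and apply \cref{lem-open-map} to the resulting endpoint map, which is closer in spirit to the proofs of \cref{thm:CDC-gen} and \cref{thm:xbar} and has the advantage of not relying on the Lee--Markus convexity result at all. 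The paper's approach, in turn, is shorter and treats both implications symmetrically via separation.

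One small correction to your bookkeeping: $d+1$ elements of $W$ need not positively span $\RR^d$. For instance, with $A\equiv0$, $B$ constant, and $V$ chosen so that $W$ consists of $0$ together with the four vertices $(\pm1,\pm1)$ of a centered square in $\RR^2$, one has $\cone W=\RR^2$ but no three of these vectors positively span. Steinitz's refinement of Carath\'eodory gives at most $2d$ points of $W$ whose convex hull contains $0$ in its interior; in any case, ``finitely many'' is all your construction needs, and \cref{lem-open-map} applies for arbitrary $k$, so the fix is purely cosmetic.
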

\noindent
We also need the following characterization of when some projection of the accessible set
is a neighborhood of the origin in the projection of the state space
(controllability with respect to a
projection, see \cref{def:controllabilty-loc}, part II).
\begin{proposition}
  \label{prop:lin-proj}
  Assume that $V$ is a compact subset of $\RR^m$ containing zero, and
  that $t_0,\tf$ are two real numbers.
  Let $n$ be a positive integer, $n\leq d$,
  and $\ProjLin$ a surjective linear map $\RR^d\to\RR^n$.
  The set $\ProjLin\,\AccLin^V_{t_0,\tf}(0)$ (image of $\AccLin^V_{t_0,\tf}(0)$ by $\ProjLin$)
  is a neighborhood of the origin in $\RR^n$ if and only if
  \begin{equation}
    \label{eq:136b}
    \cone\,\{\ProjLin\,\phi(\tf,s)\,B(s)v,\;s\in[t_0,\tf],\;v\in V\}=\RR^n\,.
  \end{equation}
\end{proposition}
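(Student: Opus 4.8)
The plan is to deduce the statement from \cref{prop:lin} by absorbing the projection $\ProjLin$ into the input matrix of an auxiliary zero-drift system of lower dimension. Starting from $z(t_0)=0$, the variation-of-constants formula \eqref{eq:35} gives
\[
  \AccLin^V_{t_0,\tf}(0)=\Bigl\{\int_{t_0}^{\tf}\phi(\tf,\tau)B(\tau)v(\tau)\,\mathrm{d}\tau,\ \ v(\cdot)\ V\text{-valued}\Bigr\}\,,
\]
so, since $\ProjLin$ is linear and commutes with the integral,
\[
  \ProjLin\,\AccLin^V_{t_0,\tf}(0)=\Bigl\{\int_{t_0}^{\tf}\widetilde B(\tau)v(\tau)\,\mathrm{d}\tau,\ \ v(\cdot)\ V\text{-valued}\Bigr\},\qquad \widetilde B(\tau):=\ProjLin\,\phi(\tf,\tau)B(\tau)\,.
\]

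The key observation is that this last set is exactly the accessible set at final time $\tf$ (from the origin at initial time $t_0$) of the auxiliary time-varying linear system $\dot y=\widetilde B(t)\,v$, $v\in V$, with state $y\in\RR^n$. This is a system of the form \eqref{eq:lin} with state dimension $n$, \emph{vanishing} drift matrix, and smooth input matrix $\widetilde B(\cdot)$ (smooth because $\tau\mapsto\phi(\tf,\tau)$ and $B(\cdot)$ are). As its drift vanishes, its transition matrix is the constant identity $I_n$; hence \cref{prop:lin}, applied to this auxiliary system with the same $t_0$ and $\tf$, asserts that $\ProjLin\,\AccLin^V_{t_0,\tf}(0)$ is a neighborhood of the origin in $\RR^n$ if and only if $\cone\{I_n\,\widetilde B(s)v,\ s\in[t_0,\tf],\ v\in V\}=\cone\{\ProjLin\,\phi(\tf,s)B(s)v,\ s\in[t_0,\tf],\ v\in V\}=\RR^n$, which is precisely Condition \eqref{eq:136b}. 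This yields the proposition.

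The only point to check is the identification of $\ProjLin\,\AccLin^V_{t_0,\tf}(0)$ with the accessible set of the auxiliary system, which is immediate from the two displays above; this is where all the work is hidden, and reducing to \cref{prop:lin} is what makes it painless. Were one to argue directly instead, the natural route is via polar cones: the set $C:=\ProjLin\,\AccLin^V_{t_0,\tf}(0)$ is convex (convexity of $\AccLin^V_{t_0,\tf}(0)$ is recalled after \eqref{eq:accTlin-full}) and contains the origin, so it is a neighborhood of the origin if and only if $C^\circ=\{0\}$; a concentration argument---test controls supported on shrinking subintervals, together with continuity of $(s,v)\mapsto\ProjLin\,\phi(\tf,s)B(s)v$---identifies $C^\circ$ with $S^\circ$, where $S=\{\ProjLin\,\phi(\tf,s)B(s)v:s\in[t_0,\tf],\,v\in V\}$. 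The main obstacle on that route is the final equivalence $S^\circ=\{0\}\Leftrightarrow\cone S=\RR^n$: since $S$ is compact, the bipolar theorem gives $\overline{\cone S}=\RR^n$, whence $0\in\Int\co S$ and $\cone S=\cone(\co S)=\RR^n$. The reduction to \cref{prop:lin} sidesteps exactly this compactness-and-closure subtlety.
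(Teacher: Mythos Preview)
Your proof is correct and takes a route different from the paper's. The paper argues directly: convexity of $\AccLin^V_{t_0,\tf}(0)$, a supporting half-space when $\ProjLin\,\AccLin^V_{t_0,\tf}(0)$ fails to be a neighborhood of the origin, and then the pointwise inequality $p\,\ProjLin\,\phi(\tf,s)B(s)v\geq0$ recovered from the integrated one (the step the paper brushes aside as ``not too difficult to see''); \cref{prop:lin} is then obtained as the special case $\ProjLin=I$. You go the other way: absorb $\ProjLin\,\phi(\tf,\cdot)$ into a new input matrix $\widetilde B$, recognize $\ProjLin\,\AccLin^V_{t_0,\tf}(0)$ as the accessible set of the drift-free auxiliary system $\dot y=\widetilde B(t)v$ in $\RR^n$, and invoke \cref{prop:lin} for that system, whose transition matrix is the identity. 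This is a clean reduction that makes transparent that the projection adds no genuine difficulty once the unprojected case is known. The only caveat is logical order: in the appendix as written, \cref{prop:lin} is obtained \emph{from} \cref{prop:lin-proj}, so your reduction is circular unless \cref{prop:lin} is first established independently---which is of course immediate, by running the paper's direct argument with $\ProjLin=I$. Your closing paragraph, incidentally, is essentially the paper's own proof recast in polar-cone language, including the concentration step and the passage from $\overline{\cone S}=\RR^n$ to $\cone S=\RR^n$ via compactness of $S$.
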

\noindent
Note that, by the separating hyperplane theorem, \eqref{eq:136} and \eqref{eq:136b} are
equivalent to
\begin{align}
  \label{eq:36}
  &\bigl(
      p\,\phi(t_0,s)\,B(s)v\geq0\ \ \text{for all $v\in V$, $s\in[t_0,\tf]$}
    \bigr)  \ \ \Rightarrow\ p=0 \\
  \label{eq:36b}
  \text{and}\ \ \
  &\bigl(
        p\;\ProjLin\,\phi(\tf,s)\,B(s)v\geq0\ \ \text{for all $v\in V$, $s\in[t_0,\tf]$}
    \bigr)
    \ \Rightarrow\ p=0
\end{align}
respectively, $p\in(\RR^d)^*$, $p\in(\RR^n)^*$ in \eqref{eq:36}, \eqref{eq:36b}.
Conditions are given under this form in the literature, rather than
\eqref{eq:136} or \eqref{eq:136b}.

\Cref{prop:lin} is a particular case of \cref{prop:lin-proj}: take $\manifM=\bigmanif$,
$n=d$, and $\ProjLin=I$ (identity map), and factor out the linear isomorphism
$\phi(t_0,\tf)$ in \eqref{eq:136b} to ``replace $\tf$ with $t_0$''.
When $n<d$, the controllability property characterized in \cref{prop:lin-proj} is strictly
weaker than the one in \cref{prop:lin}.
Let us end this appendix with a short proof of \cref{prop:lin-proj}.

\begin{proof}[Proof of \cref{prop:lin-proj}]
  Let us prove necessity first.
  If \eqref{eq:136b} does not hold, there exists a nonzero $p$ such that
  $p\,\ProjLin\phi(\tf,s)\,B(s)v\geq0$ for any $v\in V$ and $s\in[t_0,\tf]$, and
  formula \eqref{eq:35}, with $z_0=0$, obviously implies $p\,\ProjLin z(\tf)\geq0$ for any
  solution,
  implying that $\ProjLin\,\AccLin^V_{t_0,\tf}(0)$ is contained in the half-space $\{y\in\RR^n,\,p\,y\geq0\}$,
  hence is not a neighborhood of $0$.

To prove the converse, assume that $\ProjLin\,\AccLin^V_{t_0,\tf}(0)$ is \emph{not} a neighborhood of 0 in $\RR^n$.
Since $\ProjLin\,\AccLin^V_{t_0,\tf}(0)$ is convex (this was recalled at the beginning of this appendix,
see \cite[Chapter 2, Theorem 1A (appendix), p.~164]{Lee-Mar86}),
it is contained in the half space $\{y\in\RR^n,\,p\,y\geq0\}$, for some nonzero $p\in(\RR^n)^*\setminus\{0\}$, \emph{i.e.} any
solution $(z(.),v(.))$ of \eqref{eq:lin} with $z(t_0)=0$ and $v(.)$ valued in $V$
satisfies $p\ProjLin z(\tf)\geq0$.
It is not too difficult to see that, for the same $p$, one must then have
$p\ProjLin\phi(\tf,s)B(s)v\geq0$ for all $v$ in
$V$ and all $s$ in $[t_0,\tf]$; this defeats
\eqref{eq:36b}, hence \eqref{eq:136b}.
\end{proof}


\begin{thebibliography}{10}

\bibitem{agrachev_control_2004}
{\sc A.~A. Agrachev and Y.~L. Sachkov}, {\em Control Theory from the Geometric
  Viewpoint}, vol.~87 of Encyclopaedia of Math.\ Sci., Springer-Verlag, Berlin,
  2004, \url{https://doi.org/10.1007/978-3-662-06404-7}.

\bibitem{Bonn81cras}
{\sc B.~Bonnard}, {\em Contr\^olabilit\'e des syst\`emes non lin\'eaires}, C.
  R. Acad. Sci. Paris S\'{e}r. I Math., 292 (1981), pp.~535--537,
  \url{https://gallica.bnf.fr/ark:/12148/bpt6k98224180/f643.item#}.

\bibitem{Bullo-Lewis}
{\sc F.~Bullo and A.~D. Lewis}, {\em Geometric control of mechanical systems},
  vol.~49 of Texts Appl.\ Math., Springer-Verlag, New York, 2005,
  \url{https://doi.org/10.1007/978-1-4899-7276-7}.

\bibitem{Cail-Noa01}
{\sc J.-B. Caillau and J.~Noailles}, {\em Coplanar control of a satellite
  around the {E}arth}, ESAIM Control Optim. Calc. Var., 6 (2001), pp.~239--258,
  \url{https://doi.org/10.1051/cocv:2001109}.

\bibitem{Coro07}
{\sc J.-M. Coron}, {\em Control and nonlinearity}, vol.~136 of Mathematical
  Surveys and Monographs, American Mathematical Society, Providence, RI, 2007.

\bibitem{Danh-Loh-Jun21}
{\sc B.~Danhane, J.~Loh{\'e}ac, and M.~Jungers}, {\em Contributions to output
  controllability for linear time varying systems}, {IEEE Control Systems
  Letters}, 6 (2021), pp.~1064--1069,
  \url{https://doi.org/10.1109/LCSYS.2021.3089642}.

\bibitem{Gall-Hul-Laf04}
{\sc S.~Gallot, D.~Hulin, and J.~Lafontaine}, {\em Riemannian geometry},
  Universitext, Springer-Verlag, Berlin, third~ed., 2004,
  \url{https://doi.org/10.1007/978-3-642-18855-8}.

\bibitem{Grav50}
{\sc L.~M. Graves}, {\em Some mapping theorems}, Duke Math. J., 17 (1950),
  pp.~111--114, \url{https://doi.org/10.1215/S0012-7094-50-01713-3},
  \url{http://projecteuclid.org/euclid.dmj/1077476002}.

\bibitem{ecc-2022}
{\sc A.~Herasimenka, J.-B. Caillau, L.~Dell'Elce, and J.-B. Pomet}, {\em
  Controllability test for fast-oscillating systems with constrained control.
  application to solar sailing}, in Europ.\ Control Conf.\ (ECC), 2022,
  pp.~2143--2148, \url{https://doi.org/10.23919/ECC55457.2022.9838146}.

\bibitem{JGCD}
{\sc A.~Herasimenka, J.-B. Caillau, L.~Dell'Elce, and J.-B. Pomet}, {\em
  Controllability properties of solar sails}, J.\ Guid.\ Control Dyn.\, 46
  (2023), pp.~900--909, \url{https://doi.org/10.2514/1.G007250}.

\bibitem{jurdjevic_geometric_1996}
{\sc V.~Jurdjevic}, {\em Geometric Control Theory}, Cambridge University Press,
  1~ed., 1997, \url{https://doi.org/10.1017/CBO9780511530036}.

\bibitem{Jurd-Kup81}
{\sc V.~Jurdjevic and I.~Kupka}, {\em Control systems on semisimple {L}ie
  groups and their homogeneous spaces}, Ann. Inst. Fourier, 31 (1981),
  pp.~151--179, \url{https://doi.org/10.5802/aif.853}.

\bibitem{Kail80}
{\sc T.~Kailath}, {\em Linear systems}, Information and System Sciences,
  Prentice-Hall Inc., Englewood Cliffs, N.J., 1980.

\bibitem{Krei-Sar64}
{\sc E.~Kreindler and P.~E. Sarachik}, {\em On the concepts of controllability
  and observability of linear systems}, IEEE Trans.\ Automatic Control, 9
  (1964), pp.~129--136, \url{https://doi.org/10.1109/TAC.1964.1105665}.

\bibitem{Kren74}
{\sc A.~J. Krener}, {\em A generalization of {C}how's theorem and the bang-bang
  theorem to non-linear control problems}, SIAM J. Control, 12 (1974),
  pp.~43--52, \url{https://doi.org/10.1137/0312005}.

\bibitem{Lee-Mar86}
{\sc E.~B. Lee and L.~Markus}, {\em Foundations of optimal control theory},
  Robert E. Krieger Publishing Co. Inc., Melbourne, FL, second~ed., 1986.

\bibitem{Lewi-Mur99}
{\sc A.~D. Lewis and R.~M. Murray}, {\em Configuration controllability of
  simple mechanical control systems}, SIAM Review, 41 (1999), pp.~555--574,
  \url{https://doi.org/10.1137/S0036144599351065}.

\bibitem{lian-1994a}
{\sc K.-Y. Lian, L.-S. Wang, and L.-C. Fu}, {\em Controllability of spacecraft
  systems in a central gravitational field}, IEEE Trans.\ Automat.\ Control, 39
  (1994), pp.~2426--2441, \url{https://doi.org/10.1109/9.362852}.

\bibitem{Lobr70}
{\sc C.~Lobry}, {\em Contr{\^o}labilit{\'e} des syst{\`e}mes non
  lin{\'e}aires}, SIAM J. Control, 8 (1970), pp.~573--605,
  \url{https://doi.org/10.1137/0308042}.

\bibitem{Sara-Kra63}
{\sc P.~E. Sarachik and G.~M. Kranc}, {\em On optimal control of systems with
  multi-norm constraints}, IFAC Proceedings Volumes, 1 (1963), pp.~306--314,
  \url{https://doi.org/10.1016/S1474-6670(17)69667-8}.
\newblock 2nd Int.\ IFAC Congress on Automatic and Remote Control.

\bibitem{Schm-Bar80}
{\sc W.~E. Schmitendorf and B.~R. Barmish}, {\em Null controllability of linear
  systems with constrained controls}, SIAM J.\ Control Optim., 18 (1980),
  pp.~327--345, \url{https://doi.org/10.1137/0318025}.

\bibitem{Silv-Mea67}
{\sc L.~M. Silverman and H.~E. Meadows}, {\em Controllability and observability
  in time-variable linear systems}, SIAM J.\ on Control, 5 (1967), pp.~64--73,
  \url{https://doi.org/10.1137/0305005}.

\bibitem{Suss73}
{\sc H.~J. Sussmann}, {\em Orbits of families of vector fields and
  integrability of distributions}, Trans. Amer. Math. Soc., 317 (1973),
  pp.~171--188, \url{https://doi.org/10.2307/1996660}.

\bibitem{Sussmann1998}
{\sc H.~J. Sussmann}, {\em Multidifferential calculus: Chain rule, open mapping
  and transversal intersection theorems}, in Optimal control ({G}ainesville,
  {FL}, 1997), vol.~15 of Appl.\ Optim., Kluwer Acad. Publ., Dordrecht, 1998,
  \url{https://doi.org/10.1007/978-1-4757-6095-8\_20}.

\bibitem{Suss-Jur72}
{\sc H.~J. Sussmann and V.~Jurdjevic}, {\em Controllability of nonlinear
  systems}, J. Differential Equations, 12 (1972), pp.~95--116,
  \url{https://doi.org/10.1016/0022-0396(72)90007-1}.

\bibitem{Weis68}
{\sc L.~Weiss}, {\em On the structure theory of linear differential systems},
  SIAM J. Control, 6 (1968), pp.~659--680,
  \url{https://doi.org/10.1137/0306043}.

\end{thebibliography}

\end{document}